\documentclass[reqno,10pt]{amsart}
\usepackage{amssymb}
\usepackage[usenames, dvipsnames]{color}
\usepackage{esint}
\usepackage{hyperref}
\usepackage{todonotes}
\usepackage{verbatim}

\usepackage{cite}
\usepackage[nobysame]{amsrefs}
\def\MR#1{}

\theoremstyle{plain}
\newtheorem{theorem}{Theorem}[section]
\newtheorem{lemma}[theorem]{Lemma}
\newtheorem{corollary}[theorem]{Corollary}
\newtheorem{proposition}[theorem]{Proposition}

\theoremstyle{definition}
\newtheorem{definition}[theorem]{Definition}

\theoremstyle{remark}
\newtheorem{remark}[theorem]{Remark}

\newcommand{\dv}{\operatorname{div}}
\newcommand{\osc}{\operatorname{osc}}

\numberwithin{equation}{section}

\newcommand{\bN}{\mathbb{N}}

\newcommand{\bR}{\mathbb{R}}

\newcommand{\bZ}{\mathbb{Z}}
\newcommand{\bS}{\mathbb{S}}

\newcommand\cR{\mathcal{R}}

\makeatletter
\def\dashint{\operatorname%
{\,\,\text{\bf--}\kern-.98em\DOTSI\intop\ilimits@\!\!}}
\makeatother

\begin{document}
\title[insulated conductivity problem: the non-umbilical case]{Gradient estimates for the insulated conductivity problem: the non-umbilical case}
\author[H. Dong]{Hongjie Dong}
\author[Y.Y. Li]{YanYan Li}
\author[Z. Yang]{Zhuolun Yang}
\address[H. Dong]{Division of Applied Mathematics, Brown University, 182 George Street, Providence, RI 02912, USA}
\email{Hongjie\_Dong@brown.edu}

\address[Y.Y. Li]{Department of
Mathematics, Rutgers University, 110 Frelinghuysen Rd, Piscataway,
NJ 08854, USA}
\email{yyli@math.rutgers.edu}

\address[Z. Yang]{Institute for Computational and Experimental Research in Mathematics, Brown University, 121 South Main Street, Providence, RI 02903, USA}
\email{zhuolun\_yang@brown.edu}

\thanks{H. Dong is partially supported by Simons Fellows Award 007638, NSF Grant DMS-2055244, and the Charles Simonyi Endowment at the Institute of Advanced Study.}
\thanks{Y.Y. Li is partially supported by NSF Grants DMS-1501004, DMS-2000261, and Simons Fellows Award 677077.}
\thanks{Z. Yang is partially supported by the Simons Bridge Postdoctoral Fellowship at Brown University}

\subjclass[2020]{35J15, 35Q74, 74E30, 74G70, 78A48}

\keywords{Optimal gradient estimates, high contrast coefficients, insulated conductivity problem, degenerate elliptic equation, the non-umbilical case}
\begin{abstract}
We study the insulated conductivity problem with inclusions embedded in a bounded domain in $\bR^n$, for $n \ge 3$. The gradient of solutions may blow up as $\varepsilon$, the distance between the inclusions, approaches to $0$. We established in a recent paper optimal gradient estimates for a class of inclusions including balls. In this paper, we prove such gradient estimates for general strictly convex inclusions. Unlike the perfect conductivity problem, the estimates depend on the principal curvatures of the inclusions, and we show that these estimates are characterized by the first non-zero eigenvalue of a divergence form elliptic operator on $\bS^{n-2}$.
\end{abstract}
\maketitle

\section{Introduction and main results}

In this paper, a continuation of \cite{DLY}, we establish gradient estimates for the insulated conductivity problem in the presence of multiple closely located inclusions in a bounded domain in $\bR^n$, $n \ge 3$. Let $\Omega \subset \bR^n$ be a bounded domain with $C^{2}$ boundary containing two $C^{2,\gamma}~~(0 < \gamma < 1)$ relatively strictly convex open sets $D_{1}$ and $D_{2}$ with dist$(D_1 \cup D_2, \partial \Omega) > c > 0$. Denote $\widetilde{\Omega} := \Omega \setminus \overline{(D_1 \cup D_2)}$. The conductivity problem can be modeled by the following elliptic equation:
\begin{equation*}
\begin{cases}
\mathrm{div}\Big(a_{k}(x)\nabla{u}_{k}\Big)=0&\mbox{in}~\Omega,\\
u_{k}=\varphi(x)&\mbox{on}~\partial\Omega,
\end{cases}
\end{equation*}
where $a_{k}$ denotes the conductivity distribution, that is,
$$
a_k = k \chi_{D_1 \cup D_2} + \chi_{\widetilde{\Omega}}.
$$
Let
$$\varepsilon: = \mbox{dist}(D_1, D_2)$$
be small. When $k$ is large or close to $0$, the gradient of solutions may blow up, and it is significant to capture this singular behavior from an engineering point of view. The problem is motivated by the study of damage and fracture analysis of composite materials in the work of Babu\v{s}ka, Andersson, Smith, and Levin \cite{BASL}, where they studied the Lam\'e system and analyzed numerically that, when the ellipticity constants are bounded away from $0$ and infinity, the gradient of solutions remains bounded independent of the distance between inclusions. Bonnetier and Vogelius \cite{BV} proved it in the context of conductivity problem when inclusions are two touching balls in $\bR^2$. This result was extended by Li and Vogelius \cite{LV} to general second order elliptic equations of divergence form with piecewise H\"older coefficients and general shape of inclusions $D_1$ and $D_2$ in any dimension, and then by Li and Nirenberg \cite{LN} for general second order elliptic systems of divergence form, including the linear system of elasticity. Some higher order derivative estimates in dimension $n=2$ were obtained in \cites{DZ,DL,JiKang}.

When $k$ degenerates to $\infty$ (inclusions are perfect conductors) or $0$ (insulators), it was shown in \cites{Kel, BudCar, Mar} that the gradient of solutions generally becomes unbounded as $\varepsilon \to 0$. For the perfect conductivity problem, it was known that
\begin{equation*}
\begin{cases}
\| \nabla u \|_{L^\infty(\widetilde{\Omega})} \le C\varepsilon^{-1/2} \|\varphi\|_{C^2(\partial \Omega)} &\mbox{when}~n=2,\\
\| \nabla u \|_{L^\infty(\widetilde{\Omega})} \le C|\varepsilon \ln \varepsilon|^{-1} \|\varphi\|_{C^2(\partial \Omega)} &\mbox{when}~n=3,\\
\| \nabla u \|_{L^\infty(\widetilde{\Omega})} \le C\varepsilon^{-1} \|\varphi\|_{C^2(\partial \Omega)} &\mbox{when}~n\ge 4,
\end{cases}
\end{equation*}
see \cites{AKLLL,AKL,Y1,Y2,BLY1,BLY2}. These bounds were shown to be optimal and they are independent of the shape of inclusions, as long as the inclusions are relatively strictly convex. Moreover, many works have been done in characterizing the singular behavior of $\nabla u$, which are significant in practical applications. For further works on the perfect conductivity problem and closely related works, see e.g. \cites{ACKLY,BT1,BT2,DL,AKKY,KLY1,KLY2,L,LLY,LWX,BLL,BLL2,DZ,KL,CY,ADY,Gor,LimYun} and the references therein.

For the insulated conductivity problem, it was proved in \cite{BLY2} that
\begin{equation}\label{insulated_upper_bound}
\| \nabla u \|_{L^\infty(\widetilde{\Omega})} \le C\varepsilon^{-1/2} \|\varphi\|_{C^2(\partial \Omega)}\quad \mbox{when}~n\ge 2.
\end{equation}
The upper bound is optimal for $n = 2$. Yun \cite{Y3} studied the following free space insulated conductivity problem in $\bR^3$: Let $H$ be a harmonic function in $\bR^3$, $D_1 = B_1\left(0,0,1+\frac{\varepsilon}{2} \right)$, and $D_2 = B_1\left(0,0,-1-\frac{\varepsilon}{2} \right)$,
\begin{equation*}
\begin{cases}
\Delta{u}=0&\mbox{in}~\bR^3\setminus\overline{(D_1 \cup D_2)},\\
\frac{\partial u}{\partial \nu} = 0 &\mbox{on}~\partial{D}_{i},~i=1,2,\\
u(x)-H(x) = O(|x|^{-2})&\mbox{as}~|x| \to \infty.
\end{cases}
\end{equation*}
He proved that for some positive constant $C$ independent of $\varepsilon$,
\begin{equation*}
\max_{|x_3|\le \varepsilon/2}|\nabla u(0,0,x_3)| \le C \varepsilon^{\frac{\sqrt{2}-2}{2}}.
\end{equation*}
He also showed that this upper bound of $|\nabla u|$ on the $\varepsilon$-segment connecting $D_1$ and $D_2$ is optimal for $H(x) \equiv x_1$. However, this result does not provide an upper bound of $|\nabla u|$ in the complement of the $\varepsilon$-segment. The upper bound \eqref{insulated_upper_bound} was improved by Li and Yang \cite{LY2} to
\begin{equation*}
\| \nabla u \|_{L^\infty(\widetilde{\Omega})} \le C\varepsilon^{-1/2 + \beta} \|\varphi\|_{C^2(\partial \Omega)}\quad \mbox{when}~n\ge 3,
\end{equation*}
for some $\beta > 0$. When insulators are unit balls, a more explicit constant $\beta(n)$ was given by Weinkove in \cite{We} for $n \ge 4$ by a different method. The constant $\beta(n)$ obtained in \cite{We} presumably improves that in \cite{LY2}. In particular, it was proved in \cite{We} that $\beta(n)$ approaches $1/2$ from below as $n \to \infty$. Despite the significant progress on the conductivity problem that has been made in the past three decades or so, the optimal blow-up rate for the insulated conductivity problem in dimensions $n \ge 3$ remains unknown, and it is described as an outstanding open problem in \cite{Kang}.

In \cite{DLY}, we established optimal gradient estimates for a certain class of inclusions including two balls of any size in dimensions $n \ge 3$. In this paper, we study the insulated conductivity problem with $C^{\gamma}$ coefficients in dimensions $n \ge 3$, with any $C^{2,\gamma}$ relatively strictly convex inclusions:
\begin{equation}\label{general_equation}
\left\{
\begin{aligned}
-\partial_i (A^{ij}(x)  \partial_j u(x)) &=0 \quad \mbox{in }\widetilde{\Omega},\\
A^{ij}(x)  \partial_j u(x) \nu_i &= 0 \quad \mbox{on}~\partial{D}_{i},~i=1,2,\\
 u &= \varphi \quad \mbox{on } \partial \Omega,
\end{aligned}
\right.
\end{equation}
where $0 < \gamma < 1$ and $(A^{ij}(x))$ satisfies, for some constants $\sigma > 0$,
\begin{equation}
\label{A_ij_assumption}
(A^{ij}(x)) \in C^{\gamma}~~\mbox{is symmetric}, \quad \sigma I \le A(x) \le \frac{1}{\sigma} I,
\end{equation}
$\varphi\in{C}^{2}(\partial\Omega)$ is given, and $\nu = (\nu_1, \ldots, \nu_n)$ denotes the inner normal vector on $\partial{D}_{1} \cup \partial{D}_{2}$. We use the notation $x = (x', x_n)$, where $x' \in \bR^{n-1}$. After choosing a coordinate system properly, we can assume that near the origin,  the part of $\partial D_1$ and $\partial D_2$, denoted by $\Gamma_+$ and $\Gamma_-$, are respectively the graphs of two $C^{2,\gamma}$ functions in terms of $x'$. That is, for some $R_0 > 0$,
\begin{align}\label{Gamma_plusminus}
\Gamma_+ = \left\{ x_n = \frac{\varepsilon}{2}+f(x'),~|x'|<R_0\right\}~~ \mbox{and} ~~\Gamma_- = \left\{ x_n = -\frac{\varepsilon}{2}+g(x'),~|x'|<R_0\right\},
\end{align}
where $f$ and $g$ are $C^{2,\gamma}(0 < \gamma < 1)$ functions satisfying
\begin{equation}\label{fg_0}
f(x')>g(x')\quad\mbox{for}~~0<|x'|<R_{0},
\end{equation}
\begin{equation}\label{fg_1}
f(0')=g(0')=0,\quad\nabla_{x'}f(0')=\nabla_{x'}g(0')=0, \quad D^2 (f-g)(0') > 0.
\end{equation}
For $0 < r\leq R_{0}$, we denote
\begin{align}\label{domain_def_Omega}
\Omega_{r}:=\left\{(x',x_{n})\in \widetilde{\Omega}~\big|~-\frac{\varepsilon}{2}+g(x')<x_{n}<\frac{\varepsilon}{2}+f(x'),~|x'|<r\right\}.
\end{align}
We will focus on the following problem near the origin:
\begin{equation}\label{main_problem_narrow}
\left\{
\begin{aligned}
-\partial_i (A^{ij}(x)  \partial_j u(x)) &=0 \quad \mbox{in }\Omega_{R_0},\\
A^{ij}(x)  \partial_j u(x) \nu_i &= 0 \quad \mbox{on } \Gamma_+ \cup \Gamma_-.
\end{aligned}
\right.
\end{equation}
It was proved in \cite{BLY2} that for $u \in H^1(\Omega_{R_0})$ satisfying \eqref{main_problem_narrow},
\begin{equation}\label{grad_u_bound_rough}
| \nabla u(x)| \le C \| u\|_{L^\infty(\Omega_{R_0})}(\varepsilon + |x'|^2)^{-1/2}, \quad \forall x \in \Omega_{R_0/2},
\end{equation}
where $C$ is a positive constant depending only on $n, R_0, \gamma, \sigma, \|A\|_{C^{\gamma}}, \|f\|_{C^{2,\gamma}}$, and $\|g\|_{C^{2,\gamma}}$, and is in particular independent of $\varepsilon$.

In this paper, we show that the optimal exponent of the gradient estimates of the insulated conductivity problem \eqref{main_problem_narrow} is closely related to the following eigenvalue problem on $\bS^{n-2}$. Consider
\begin{equation}
\label{SL_problem}
-\dv_{\bS^{n-2}}\Big(a(\xi)\nabla_{\bS^{n-2}}u (\xi)\Big) =\lambda a(\xi) u(\xi), \quad \xi \in \bS^{n-2},
\end{equation}
where $a(\xi)$ is a positive function on $\bS^{n-2}$ with $\ln a \in L^\infty(\bS^{n-2})$.
Denote
\begin{equation}
\label{inner_product}
\langle u, v\rangle_{\bS^{n-2}}=\fint_{\bS^{n-2}} a(\xi) uv\,d\sigma.
\end{equation}
From the classical theory, all eigenvalues are real, and the corresponding eigenfunctions can be normalized to form an orthonormal basis of $L^2(\bS^{n-2})$ under the inner-product defined above.
The first nonzero eigenvalue $\lambda_1$ of this problem is given by the Rayleigh quotient:
\begin{equation}
\label{first_eigenvalue}
\lambda_1=\inf_{u\not\equiv 0,\langle u, 1\rangle_{\bS^{n-2}}=0}
\frac{\fint_{\bS^{n-2}}a(\xi)|\nabla_{\bS^{n-2}}u|^2\,d\sigma}
{\fint_{\bS^{n-2}}a(\xi)|u|^2\,d\sigma}.
\end{equation}
Let $\alpha(\lambda_1)$ be the positive root of the quadratic polynomial $\alpha^2 + (n-1)\alpha - \lambda_1$, that is,
\begin{equation}
\label{alpha_lambda_1}
\alpha(\lambda_1) = \frac{-(n-1) + \sqrt{(n-1)^2 + 4 \lambda_1}}{2}.
\end{equation}

First, we consider the case when $A^{ij}(0) = \delta_{ij}$, where $\delta_{ij}$ is the Kronecker delta function. When two inclusions touch, namely, $\varepsilon = 0$, we prove the following gradient estimates.
\begin{theorem}
\label{touch_thm}
For $n \ge 3$, $R_0 > 0$, and $\varepsilon = 0$, let $f, g \in C^{2,\gamma} (0 < \gamma < 1)$ satisfy \eqref{fg_0} and \eqref{fg_1}, $(A^{ij}(x))$ satisfy \eqref{A_ij_assumption} with $\sigma > 0$ in $\Omega_{R_0}$, and $A^{ij}(0) = \delta_{ij}$. For any solution $u \in H^1(\Omega_{R_0})$ of \eqref{main_problem_narrow}, we have
\begin{equation}\label{main_goal_touch_case}
|\nabla u(x)|\le C \|u\|_{L^\infty(\Omega_{R_0})} |x'|^{\alpha(\lambda_1)-1}\quad \forall x\in \Omega_{R_0/2} \setminus\{0\},
\end{equation}
where $\lambda_1$ and $\alpha(\lambda_1)$ are given by \eqref{first_eigenvalue} and \eqref{alpha_lambda_1} with $a(\xi)= \xi^t \left( D^2(f-g)(0') \right) \xi$, and $C$ is a positive constant depending only on $n$, $R_0$, $\gamma$, $\sigma$, a positive lower bound of the eigenvalues of $D^2 (f-g)(0')$, and upper bounds of $\|A\|_{C^{\gamma}}$, $\|f\|_{C^{2,\gamma}}$, and $\|g\|_{C^{2,\gamma}}$.
\end{theorem}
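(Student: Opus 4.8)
The plan is to derive the pointwise bound \eqref{main_goal_touch_case} from the cross-sectional $L^2$-oscillation decay
\begin{equation}\label{Hdecay}
H(r):=\int_{\Sigma_r}|u-\overline{u}_r|^2 \le C\,\|u\|_{L^\infty(\Omega_{R_0})}^2\, r^{\,n+2\alpha(\lambda_1)},\qquad 0<r\le R_0/2,
\end{equation}
where $\Sigma_r:=\partial\Omega_r\cap\{|x'|=r\}$ is the cross-section and $\overline{u}_r$ the average of $u$ over it. Granting \eqref{Hdecay}: by the coarea formula and the (easily controlled) slow movement of the averages $\overline{u}_\rho$, $\inf_c\int_{\Omega_{2r}}|u-c|^2\le C\int_0^{2r}H(\rho)\,d\rho$ up to lower-order terms, hence $\le C\|u\|_{L^\infty(\Omega_{R_0})}^2\,r^{\,n+1+2\alpha(\lambda_1)}$; then, for $x_0$ with $|x_0'|=\rho$, the scaling $x\mapsto x/\rho$ turns a fixed portion of $\Omega_{2\rho}\setminus\overline{\Omega_{\rho/2}}$ into a domain thin of thickness $\sim\rho$ over a set of unit size in the long variables, on which the equation is uniformly elliptic with $C^\gamma$ coefficients, and a thin-domain interior gradient estimate of the type used in \cite{BLY2} to prove \eqref{grad_u_bound_rough} gives $|\nabla u(x_0)|\le C\rho^{-1}\big(\fint_{\Omega_{2\rho}\setminus\overline{\Omega_{\rho/2}}}|u-c|^2\big)^{1/2}\le C\|u\|_{L^\infty(\Omega_{R_0})}\,\rho^{\,\alpha(\lambda_1)-1}$ (a Caccioppoli inequality then also yields $\int_{\Omega_r}|\nabla u|^2\le C\|u\|_{L^\infty(\Omega_{R_0})}^2 r^{\,2\alpha(\lambda_1)+n-1}$). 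The hypotheses $A^{ij}(0)=\delta_{ij}$ and $\varepsilon=0$ enter by making the leading order of the gap width at $|x'|=\rho$ exactly $\tfrac12\rho^2 a(\xi)$, $\xi=x'/|x'|$, $a(\xi)=\xi^t D^2(f-g)(0')\xi$, which is what fixes the exponent; recall also $2\alpha(\lambda_1)+n-1=\sqrt{(n-1)^2+4\lambda_1}$.

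To prove \eqref{Hdecay} I would use an anisotropic Almgren-type frequency function $N(r):=rE(r)/H(r)$, with $E(r):=\int_{\Omega_r}A^{ij}\partial_i u\,\partial_j u\,dx$. Because $u$ satisfies the conormal condition on $\Gamma_\pm$, integrating by parts over $\Omega_r$ gives the exact identities $E(r)=\int_{\Sigma_r}u\,A^{ij}\partial_j u\,n_i$ and $\int_{\Sigma_r}A^{ij}\partial_j u\,n_i=0$ (with $n$ the outward unit normal to $\Omega_r$), so $u$ may be replaced by $u-\overline{u}_r$ in the first. Differentiating $E$ and $H$ in $r$ (the derivative of $H$ again producing a term equal to $E(r)$) and using the Cauchy--Schwarz inequality on $\Sigma_r$, one gets an almost-monotonicity $N'(r)\ge -Cr^{\mu-1}\big(1+N(r)\big)$ for some $\mu>0$, with the error collecting $A(x)-I=O(|x|^\gamma)$, $f-g-\tfrac12(x')^tD^2(f-g)(0')x'=O(|x'|^{2+\gamma})$ and $\nabla(f-g)=O(|x'|)$, and with the part of $u$ oscillating across each $x_n$-fiber absorbed as a higher-order term (it is bounded by $Cr^4\int_{\Sigma_r}|\partial_n u|^2$ via a one-dimensional Poincar\'e inequality on the fibers). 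It follows that $N(r)\ge N_0-Cr^\mu$, where $N_0:=\lim_{\rho\to0^+}N(\rho)$; then the standard computation $H'(r)/H(r)\ge\big(n+2N(r)\big)/r-Cr^{\mu-1}$ integrates to \eqref{Hdecay} with $\alpha(\lambda_1)$ replaced by $N_0$, which suffices once we know $N_0\ge\alpha(\lambda_1)$.

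Establishing $N_0\ge\alpha(\lambda_1)$ is the step where \eqref{SL_problem} enters. A blow-up limit of $u$ at the vertex solves $\Delta u_0=0$ in the homogeneous model gap $\{-\tfrac12(x')^tD^2g(0')x'<x_n<\tfrac12(x')^tD^2f(0')x'\}$ with the Neumann condition and is homogeneous of degree $N_0$ under the parabolic scaling $x'\mapsto t x'$, $x_n\mapsto t^2 x_n$. A two-scale expansion (equivalently, averaging over $x_n$-fibers, which is exact in the limit) shows $u_0$ is independent of $x_n$ and equals $|x'|^{N_0}\psi(x'/|x'|)$ for a degree-$N_0$ homogeneous solution of the degenerate equation $\dv_{x'}\big(|x'|^2 a(x'/|x'|)\,\nabla_{x'}u_0\big)=0$ in $\bR^{n-1}$, which forces $-\dv_{\bS^{n-2}}\big(a\,\nabla_{\bS^{n-2}}\psi\big)=N_0(N_0+n-1)\,a\,\psi$. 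Thus $N_0(N_0+n-1)$ is an eigenvalue of \eqref{SL_problem} with weight $a(\xi)=\xi^tD^2(f-g)(0')\xi$; since $u$ is not locally constant (the contrary case being trivial), $u_0$ is nonconstant, so $N_0(N_0+n-1)\ge\lambda_1$, i.e.\ $N_0\ge\alpha(\lambda_1)$ by \eqref{first_eigenvalue}--\eqref{alpha_lambda_1}. The positive lower bound on the eigenvalues of $D^2(f-g)(0')$ (equivalently $\ln a\in L^\infty(\bS^{n-2})$) is used exactly here, to have the spectral theory of \eqref{SL_problem} and the weighted Poincar\'e inequality \eqref{first_eigenvalue} at our disposal; and because the perturbative errors decay strictly faster than the main term, no loss occurs and \eqref{Hdecay} holds with the sharp exponent.

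The main obstacle is to push all of this through the degeneration of $\Omega_r$ as $r\to0$: the domain pinches to the vertex, so the Poincar\'e and trace constants on $\Sigma_r$, and every geometric quantity, scale with powers of $r$ and must be tracked uniformly; the natural remedy, the parabolic rescaling, makes the rescaled equation $\rho^2\Delta_{x'}u+\partial_{nn}u=0$ singular as $\rho\to0$, so the identities and inequalities above have to be derived by hand rather than by quoting standard elliptic theory. Two further delicate points: showing that every perturbative error genuinely carries a strictly positive power of $r$ (which uses the full $C^{2,\gamma}$ regularity of $f,g$ and the $C^\gamma$ regularity of $A$), so that the almost-monotonicity of $N$ closes without loss in the exponent; and the analysis of the model, in particular excluding the negative root of $\alpha^2+(n-1)\alpha=\lambda_j$ and the modes failing to lie in $H^1$ near the vertex, and verifying that the fiber-average reduction is exact in the blow-up limit. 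Isolating $\lambda_1$ as the sharp constant governing the degenerating cross-sections, and carrying the error bookkeeping through the frequency monotonicity uniformly in $r$, is where the real work lies.
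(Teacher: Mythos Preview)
Your proposal takes a genuinely different route from the paper. The paper's proof averages $u$ over the $x_n$-fibers to obtain $\bar u(x')=\fint_{g(x')}^{f(x')}u\,dx_n$, which satisfies a degenerate equation $\dv\big(a(\xi)|x'|^2\nabla\bar u\big)=\dv F$ in $B_{R_0}\subset\bR^{n-1}$ with $|F|\le C|x'|^{2+\gamma}\overline{|\nabla u|}$. Proposition~\ref{prop_grad_u_bar_control} (built from a spectral decomposition on $\bS^{n-2}$, Lemma~\ref{lemma_v1_gen}, and a Moser bound on the inhomogeneous part, Lemma~\ref{lemma_v2}) then gives $|\bar u(x')-\bar u(0)|\le C|x'|^{\tilde\alpha}$ with $\tilde\alpha=\min\{\alpha(\lambda_1),1+\gamma-s_0\}$, where $s_0$ is the current exponent in $|\nabla u|\le C|x'|^{-s_0}$. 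The pointwise gradient bound on each annulus $\Omega_{2R}\setminus\Omega_R$ is recovered by straightening the domain and taking an even/periodic extension in $x_n$ (so the conormal condition becomes interior symmetry), then applying a standard $C^{1,\gamma}$ estimate. One pass improves the exponent by $\gamma$; finitely many passes reach $\alpha(\lambda_1)$.

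Your frequency-function scheme is attractive because, once almost-monotonicity of $N(r)$ is known, the sharp exponent falls out of the blow-up without a bootstrap. Two steps, however, need more than the outline you give. First, the Rellich identity for $E'(r)$: with the vector field $X=x'$ the Pohozaev computation produces a bulk term $\int_{\Omega_r}|\nabla_{x'}u|^2$ (not $E(r)$) and a boundary term $\int_{\Gamma_\pm\cap\{|x'|<r\}}(X\cdot\nu)|\nabla u|^2$. Since $X\cdot\nu\sim -x'\cdot\nabla f\sim -|x'|^2$ on $\Gamma_+$, this term scales like $r^{n-1}\cdot r^2\cdot r^{2\alpha-2}=r^{n-1+2\alpha}$, i.e.\ the \emph{same} order as $E(r)$, and must be combined exactly with the other leading pieces before anything can be called an error of size $O(r^{\gamma-1})$; the analogous moving-limit contribution in $H'(r)$ likewise supplies a leading $\tfrac{2}{r}H(r)$ (which is why your formula $H'/H\ge(n+2N)/r$ has $n$ rather than $n-2$). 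None of this is impossible, but it is not the standard Almgren computation and must be done by hand in the pinching geometry. Second, your blow-up already reduces to fiber averaging: you argue the parabolic blow-up is independent of $x_n$ and its $x'$-profile solves $\dv_{x'}\big(|x'|^2a(\xi)\nabla_{x'}u_0\big)=0$, but deriving that limit equation rigorously (with compactness of rescalings in a domain collapsing to a point) is essentially the content of Section~2, so the two approaches are closer than they first appear. The paper's iteration has the practical advantage that each step uses only elementary energy/Moser estimates together with the already-known rough bound \eqref{grad_u_bound_rough}, and never requires a global monotonicity identity in the degenerating geometry.
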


\begin{remark}
When $a(\xi) > 0$ a.e. satisfies $\ln a \in L^\infty(\bS^{n-2})$ and $\int_{\bS^{n-2}} a x_i = 0$ for all $i = 1,\ldots, n-1$, it will be shown that $\lambda_1 \le n - 2$ (see Lemma \ref{lambda_1_upper_bound}), and hence $\alpha(\lambda_1) \in (0,1)$.
\end{remark}

When $\varepsilon > 0$, the following gradient estimate is proved.

\begin{theorem}
\label{gen_thm}
For $n \ge 3$, $R_0 > 0$, and $\varepsilon \in (0,1/4)$, let $(A^{ij}(x))$, $f$, $g$, $\lambda_1$, and $\alpha(\lambda_1)$ be the same as in Theorem \ref{touch_thm}. For any solution $u \in H^1(\Omega_{R_0})$ of \eqref{main_problem_narrow}, we have, for any $0 \le \alpha < \alpha(\lambda_1)$,
\begin{equation*}
|\nabla u(x)|\le C \|u\|_{L^\infty(\Omega_{ R_0})} (\varepsilon + |x'|^2)^{\frac{\alpha-1}{2}}\quad \forall x\in \Omega_{R_0/2},
\end{equation*}
where $C$ is a positive constant depending only on $n$, $R_0$, $\gamma$, $\sigma$, a positive lower bound of $\alpha(\lambda_1) - \alpha$, a positive lower bound of the eigenvalues of $D^2 (f-g)(0')$, and upper bounds of $\|A\|_{C^{\gamma}}$, $\|f\|_{C^{2,\gamma}}$, and $\|g\|_{C^{2,\gamma}}$.
\end{theorem}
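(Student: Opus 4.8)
The plan is to derive the pointwise bound from an oscillation decay estimate together with the rough gradient bound \eqref{grad_u_bound_rough}, and to prove the oscillation decay by a Caccioppoli-type energy iteration in which the first nonzero eigenvalue $\lambda_1$ of \eqref{SL_problem} enters through a weighted Poincar\'e inequality on spherical slices. Normalize $\|u\|_{L^\infty(\Omega_{R_0})}=1$, write $\delta(x'):=\varepsilon+f(x')-g(x')$, and note from \eqref{fg_1} that $\delta(x')=\varepsilon+\tfrac12(x')^tD^2(f-g)(0')x'+O(|x'|^{2+\gamma})\asymp\varepsilon+|x'|^2$. Fix $\alpha\in[0,\alpha(\lambda_1))$ and put $\eta:=\alpha(\lambda_1)-\alpha>0$. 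The engine of the proof is the oscillation decay
\begin{equation*}
\osc_{\Omega_r}u\le C\,(\varepsilon+r^2)^{\alpha/2},\qquad 0<r\le R_0/2, \tag{$\star$}
\end{equation*}
with $C$ of the advertised form. Given $(\star)$, the theorem follows by combining it with \eqref{grad_u_bound_rough} applied, after rescaling to bounded geometry, on a single neck block around the point in question. If $|x'|\ge\sqrt\varepsilon$, rescaling by $|x'|$ turns \eqref{grad_u_bound_rough} into $|\nabla u(x)|\lesssim|x'|^{-1}\osc_{\Omega_{c|x'|}}u$, so $(\star)$ gives $|\nabla u(x)|\lesssim|x'|^{\alpha-1}\asymp(\varepsilon+|x'|^2)^{(\alpha-1)/2}$. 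If $|x'|<\sqrt\varepsilon$, one first notes that on $\varepsilon\le r\le\sqrt\varepsilon$ the slab $\Omega_r$ is essentially flat ($\delta\asymp\varepsilon$ up to lower order terms), so a routine $C^{1,\gamma}$-type estimate for the Neumann problem in a near-flat slab gives $\osc_{\Omega_r}(u-c_r)\lesssim(r/\sqrt\varepsilon)\,\osc_{\Omega_{\sqrt\varepsilon}}(u-c)$; combining this with $(\star)$ at scale $\sqrt\varepsilon$ and with \eqref{grad_u_bound_rough} rescaled by $\varepsilon$ on a block of size $\asymp\varepsilon$ around $x$ yields $|\nabla u(x)|\lesssim\varepsilon^{-1}\cdot\sqrt\varepsilon\cdot\varepsilon^{\alpha/2}=\varepsilon^{(\alpha-1)/2}\asymp(\varepsilon+|x'|^2)^{(\alpha-1)/2}$.

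The heart of the matter is $(\star)$ on $\sqrt\varepsilon\le r\le R_0/2$ (for $r<\sqrt\varepsilon$ it then follows by monotonicity of the oscillation). Let $\cE(r):=\int_{\Omega_r}A^{ij}\partial_iu\,\partial_ju\,dx$, which is absolutely continuous and nondecreasing with $\cE'(r)\asymp\int_{\Sigma_r}|\nabla u|^2\,dS$, $\Sigma_r:=\partial\Omega_r\cap\{|x'|=r\}$. Multiplying \eqref{main_problem_narrow} by $(u-c)$, integrating over $\Omega_r$, and using the conormal condition on $\Gamma_+\cup\Gamma_-$ to discard those boundary terms,
\begin{equation*}
\cE(r)=\int_{\Sigma_r}A^{ij}\partial_ju\,\nu_i\,(u-c)\,dS\qquad\text{for any constant }c.
\end{equation*}
The decisive ingredient is a weighted Poincar\'e inequality on $\Sigma_r$: parametrizing $\Sigma_r$ by $x'=r\xi$ with $\xi\in\bS^{n-2}$ together with the thin $x_n$-fibre, one splits $u$ into its fibrewise average $\bar u(\xi)$ plus a remainder controlled by $\delta(r\xi)\,\|\partial_{x_n}u\|_{L^2}$ via Poincar\'e in $x_n$; since the surface measure on $\Sigma_r$ is $\asymp\delta(r\xi)\,r^{n-2}\,d\xi\asymp r^{n}a(\xi)\,d\xi$ for $r\ge\sqrt\varepsilon$, with $a(\xi)=\xi^tD^2(f-g)(0')\xi$, subtracting the $\langle\cdot,1\rangle_{\bS^{n-2}}$-average of $\bar u$ and invoking the Rayleigh quotient \eqref{first_eigenvalue} yields
\begin{equation*}
\int_{\Sigma_r}|u-c|^2\,dS\ \le\ \Big(\tfrac{r^2}{\lambda_1}+C r^{2+\gamma}+C\varepsilon\Big)\,\cE'(r).
\end{equation*}
Plugging this into the boundary identity via Cauchy--Schwarz already gives some power decay, but to reach the sharp exponent one expands $\bar u(r\cdot)=\sum_{k\ge0}b_k(r)\phi_k$ in the eigenbasis $\{\phi_k\}$ of \eqref{SL_problem} ($\lambda_0=0<\lambda_1\le\cdots$, $\phi_0\equiv1$), observes that \eqref{main_problem_narrow} forces each Fourier mode $b_k$ to satisfy, up to errors, the Euler equation $r^2b_k''+nrb_k'-\lambda_kb_k=0$ whose solution bounded at $0$ grows like $r^{\alpha_k}$ with $\alpha_k$ the positive root of $\alpha^2+(n-1)\alpha-\lambda_k=0$ (so $\alpha_0=0<\alpha_1=\alpha(\lambda_1)\le\cdots$), and checks that the $k$-th mode's energy $\cE_k$ obeys $\cE_k(r)=\tfrac{r}{2\alpha_k+n-1}\cE_k'(r)$ up to errors. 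Summing over $k\ge1$ (the $k=0$ mode carries no energy) and absorbing the errors produces $\cE(r)\le\tfrac{r}{2\alpha+n-1}\cE'(r)$ on $\sqrt\varepsilon\le r\le R_0/2$, hence $\cE(r)\le C(r/R_0)^{2\alpha+n-1}\cE(R_0)\lesssim(\varepsilon+r^2)^{\alpha+(n-1)/2}$ there. Finally $(\star)$ follows from this decay by local boundedness for \eqref{main_problem_narrow} together with a Poincar\'e inequality on the thin block $\Omega_{2r}\setminus\Omega_{r/2}$: after flattening $\Gamma_\pm$ and normalizing $|x'|\sim r$ this block has bounded geometry---the quantities $|\nabla\delta|\,|x'|/\delta$ and $|D^2\delta|\,|x'|^2/\delta$ being bounded precisely because $|x'|\ge\sqrt\varepsilon$---so that $\osc_{\Omega_r}(u-c)^2\lesssim r^{-n-1}\int_{\Omega_{2r}}|u-c|^2\lesssim r^{1-n}\cE(2r)\lesssim r^{2\alpha}$.

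The main obstacle is the error control inside the iteration. Using \eqref{grad_u_bound_rough} as a priori input for $\nabla u$, one must show that the discrepancies between \eqref{main_problem_narrow} and its constant-coefficient model on the exact cone over $a(\xi)$---the fibre variation $u-\bar u$, the coefficient deviation $A(x)-I$, the non-quadratic remainder $O(|x'|^{2+\gamma})$ of $\delta$, and the additive $\varepsilon$ in $\delta$---enter the modewise Euler ODEs and the weighted Poincar\'e constant as perturbations that sum over the dyadic iteration at a cost of only the arbitrarily small $\eta=\alpha(\lambda_1)-\alpha$ in the exponent, with the constant deteriorating as $\eta\to0$. The $\varepsilon$-regularization of $\delta$ is a critical perturbation at the scale $r\sim\sqrt\varepsilon$; this is both why the iteration stops there and why, in contrast to the touching case, $\alpha(\lambda_1)$ itself is not attained. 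A point of departure from \cite{DLY}, where $D^2(f-g)(0')$ is a multiple of the identity and the $\phi_k$ are explicit spherical harmonics, is that $a(\xi)$ is now a general weight, so one works throughout with the spectral theory of $-\dv_{\bS^{n-2}}\big(a\,\nabla_{\bS^{n-2}}\,\cdot\,\big)$ rather than with closed-form eigenfunctions. Finally, Theorem \ref{touch_thm} is the limiting case $\varepsilon=0$ of this scheme: the threshold disappears, the iteration runs for all $0<r\le R_0/2$, the remaining ($C^\gamma$) errors are subcritical and contribute only a bounded multiplicative constant, and the sharp exponent $\alpha(\lambda_1)$ is reached.
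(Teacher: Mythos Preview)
Your approach is genuinely different from the paper's. The paper never works with the energy $\cE(r)=\int_{\Omega_r}A\nabla u\cdot\nabla u$ or a differential inequality. Instead, after the change of variables \eqref{x_to_y_2} it passes to the vertically averaged function $\bar v(y')=\fint_{-\varepsilon}^{\varepsilon}v(y',y_n)\,dy_n$, which satisfies the degenerate $(n-1)$-dimensional equation $\dv\big[(\varepsilon+a(\xi)|y'|^2)\nabla\bar v\big]=\dv F$ in $B_{R_0}$. The key tool is a Campanato-type iteration on the $L^2$ oscillation of $\bar v$ on dyadic annuli (Proposition~\ref{v_bar_iteration_prop}): on each ball one writes $\bar v=v_1+v_2$ with $v_1$ solving the homogeneous equation $\dv[a(\xi)|y'|^2\nabla v_1]=0$ (whose sharp $(\rho/R)^{\alpha(\lambda_1)}$ decay is exactly the eigenfunction expansion, Lemma~\ref{lemma_v1_gen}), and $v_2\in H^1_0$ absorbing both $\dv F$ and the residual term $\varepsilon\Delta v_1$. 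Crucially, the paper then \emph{bootstraps}: since $F$ is controlled by the current bound on $\nabla u$, one pass of the iteration only improves the exponent by a small fixed amount, and the target $\alpha<\alpha(\lambda_1)$ is reached after finitely many passes; this is the sequence $s_0=\tfrac12,\,s_1,\,s_2,\ldots$ in Section~4.

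The gap in your scheme is the passage ``Summing over $k\ge1$ \dots and absorbing the errors produces $\cE(r)\le\frac{r}{2\alpha+n-1}\cE'(r)$.'' The modewise identity $\cE_k(r)=\frac{r}{2\alpha_k+n-1}\cE_k'(r)$ is a feature of the \emph{homogeneous} equation for $\bar u$ with weight $a(\xi)|x'|^2$; there is no natural splitting $\cE(r)=\sum_k\cE_k(r)$ of the Dirichlet energy of $u$ on $\Omega_r$, since $u$ depends on $x_n$ and the modes refer to $\bar u$. Two concrete obstacles: (i) the term $\varepsilon\,\Delta\bar u$ is not diagonal in the $a$-eigenbasis $\{Y_k\}$ (the Laplace--Beltrami operator and $-\dv_{\bS^{n-2}}(a\nabla_{\bS^{n-2}}\cdot)$ do not commute when $a$ is nonconstant), so the ``modewise Euler ODEs'' acquire genuine coupling, not just lower-order perturbations; (ii) the right-hand side $F$ and the discrepancy $u-\bar u$ are controlled only via the current gradient bound on $u$, and with just \eqref{grad_u_bound_rough} as input the resulting error after one dyadic sweep improves the exponent by at most $O(\gamma)$, not by $\alpha(\lambda_1)$. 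Your paragraph on error control acknowledges the issue but proposes no mechanism beyond the gap $\eta=\alpha(\lambda_1)-\alpha$; that gap absorbs scale-uniform multiplicative errors in the decay rate, not additive errors that carry the rough bound. A bootstrap on the gradient exponent is needed, and once you introduce it you are essentially reconstructing the paper's argument on $\bar v$.
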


We show that the estimate \eqref{main_goal_touch_case} is optimal in the following sense. Note that in the next three theorems, $\partial D_1$ and $\partial D_2$ near the origin are represented by the graphs of $f$ and $g$ respectively.

\begin{theorem}
\label{optimality_3d}
For $n = 3$, $A^{ij}(x) \equiv \delta_{ij}$, and for any positive definite matrix $M$, there exist smooth strictly convex inclusions $D_1, D_2$ inside $\Omega = B_5$ with $D^2(f-g)(0') = M$, and a boundary data $\varphi \in C^\infty(\partial \Omega)$ with $\| \varphi \|_{L^\infty(\partial \Omega)} = 1$, such that the solution $u \in H^1(\widetilde \Omega)$ of \eqref{general_equation} satisfies
$$
\limsup_{x \in \widetilde\Omega, |x| \to 0} |x'|^{1- \alpha(\lambda_1)}|\nabla u (x)| > \frac{1}{C},
$$
where $\lambda_1$ and $\alpha(\lambda_1)$ are given by \eqref{first_eigenvalue} and \eqref{alpha_lambda_1} with $a(\xi)= \xi^t M \xi$, and $C$ is some positive constant depending only on the positive lower bounds of the eigenvalues of $M$, and upper bounds of $\|\partial D_1 \|_{C^{4}}$ and $\|\partial D_2\|_{C^{4}}$.
\end{theorem}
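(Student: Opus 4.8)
The plan is to prove optimality by an explicit construction in which the geometry near the contact point is chosen so that the solution is, to leading order, a separated-variables eigenfunction of the problem \eqref{SL_problem}. Given a positive definite matrix $M$ on $\bR^2$, set $a(\xi) = \xi^t M \xi$ on $\bS^1$, let $\lambda_1$ be the first nonzero eigenvalue of \eqref{SL_problem} with this weight, and let $\psi$ be a corresponding eigenfunction. First I would construct $D_1, D_2 \subset B_5$ to be smooth, strictly convex, with the portions of their boundaries near the origin given by graphs $x_3 = \frac{\varepsilon}{2} + f(x')$ and $x_3 = -\frac{\varepsilon}{2} + g(x')$, where one may take $f(x') = \frac14 x'^t M x'$, $g = -f$ (or, with $\varepsilon = 0$ as in \eqref{main_goal_touch_case}, simply $f = \frac14 x'^t M x'$, $g = -f$ forced to meet only at the origin by strict positivity of $M$), so that $D^2(f-g)(0') = M$ exactly and the ``thickness'' of the neck is $h(x') = \varepsilon + \frac12 x'^t M x'$.

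The next step is to build an approximate solution in the neck region $\Omega_{R_0}$. In the touching case, using polar-type coordinates $x' = \rho\,\xi$ with $\rho = |x'|$, $\xi \in \bS^1$, one looks for $u \approx \rho^{\alpha(\lambda_1)}\psi(\xi)$; the exponent $\alpha(\lambda_1)$ is exactly the one making $\rho^\alpha \psi(\xi)$ an approximate solution of the degenerate equation $\dv(h \nabla u) = 0$ that governs the leading behavior after integrating the Neumann conditions across the thin neck, since $\alpha^2 + (n-1)\alpha - \lambda_1 = 0$ is precisely the indicial equation. One then solves \eqref{general_equation} with the Laplacian ($A^{ij} \equiv \delta_{ij}$) in all of $\widetilde\Omega$ with a boundary datum $\varphi$ chosen so that the true solution is close to this profile near the origin — e.g. take $\varphi$ to agree, on a piece of $\partial\Omega$ away from the inclusions, with a fixed global harmonic function whose ``component'' in the $\psi$-mode is nonzero. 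The key analytic input is a lower bound: the projection of $u$ onto the $\psi$-mode, $\bar u(\rho) := \langle u(\rho\, \cdot\,), \psi\rangle_{\bS^{n-2}}$, satisfies an ODE whose two homogeneous solutions behave like $\rho^{\alpha(\lambda_1)}$ and $\rho^{-\alpha(\lambda_1) - (n-1)}$, and one must show the coefficient of the growing-gradient solution (here, since $\alpha \in (0,1)$, it is the $\rho^{\alpha}$ branch whose gradient $\sim \rho^{\alpha - 1}$ blows up) is bounded below by a constant depending only on the allowed data. This is obtained from a non-degeneracy argument: if $\bar u$ were too small, an energy or barrier comparison with the known rough bound \eqref{grad_u_bound_rough} and the global structure of $u$ would force $\varphi$ to lie in a lower-dimensional subspace, contradicting the explicit choice.

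With the lower bound $|\bar u(\rho)| \ge c\,\rho^{\alpha(\lambda_1)}$ for $\rho$ small in hand, one concludes $|\nabla u(x)| \gtrsim |\bar u'(\rho)| \gtrsim \rho^{\alpha(\lambda_1) - 1} = |x'|^{\alpha(\lambda_1)-1}$ along a suitable sequence $x \to 0$, which is exactly $\limsup_{|x|\to 0} |x'|^{1-\alpha(\lambda_1)}|\nabla u(x)| > 1/C$, with $C$ controlled by the ellipticity bounds of $M$ and the $C^4$ norms of $\partial D_1, \partial D_2$ (needed only to make the inclusions globally smooth and strictly convex while matching the prescribed second fundamental form at the origin). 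I expect the main obstacle to be the quantitative lower bound on the $\psi$-mode of the true solution: one must rule out degenerate cancellation, i.e. certify that the constructed $\varphi$ actually excites the first nonzero mode with an amplitude bounded away from zero uniformly, which requires combining the a priori estimate \eqref{grad_u_bound_rough}, an expansion of $u$ near the origin in the eigenbasis of \eqref{SL_problem}, and a careful choice of $\varphi$ (for instance using the maximum principle or a variational identity to force a definite flux through the neck). A secondary technical point is controlling the error between the separated profile $\rho^{\alpha}\psi(\xi)$ and the genuine solution of the non-constant-thickness degenerate equation; this is handled by treating the difference as a solution of a perturbed equation with right-hand side of higher order in $\rho$, absorbed by the gap between $\alpha$ and $\alpha(\lambda_1)$ exactly as in the proof of Theorem \ref{gen_thm}.
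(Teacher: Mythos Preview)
Your outline captures the broad architecture correctly: project onto the first eigenmode, derive an ODE for the radial coefficient, and show the $\rho^{\alpha(\lambda_1)}$ branch appears with a nonzero coefficient. But the step you flag as ``the main obstacle'' --- the quantitative lower bound on the $\psi$-mode coefficient --- is exactly where the paper's proof has real content that your proposal does not supply. You suggest ``the maximum principle or a variational identity to force a definite flux,'' but leave this unspecified; without it the argument is incomplete.

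The paper resolves this via two concrete ingredients you are missing. First, a structural fact about the eigenvalue problem on $\bS^1$ (Theorem~\ref{3d_eigenvalue_thm}): for any positive definite $M$ in dimension $n=3$, the eigenspace of $\lambda_1$ contains an eigenfunction $Y_{1,j}$ that is \emph{odd in $x_j$} for some $j$ (and has the same sign as $x_j$). This is a nontrivial Sturm--Liouville analysis and is what makes the $n=3$ case unconditional. Second, with $D_1,D_2$ chosen symmetric in each coordinate and $\varphi = x_j$, strict convexity gives $\partial_\nu x_j \ge 0$ on $\partial D_i \cap \{x_j\ge 0\}$, so $x_j$ is a subsolution there; by comparison $u\ge x_j$ in $\{x_j\ge 0\}$ and hence $|\bar u|\ge |x_j|$. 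Since $Y_{1,j}$ and $x_j$ have the same sign, the projection $U_{1,j}(r)=\fint_{\bS^{n-2}} a\,\bar u\,Y_{1,j}\,d\xi \ge Cr$, which forces the coefficient $C_1$ of $r^{\alpha}$ to be strictly positive (because the remainder is $O(r^{1+\alpha})$). This symmetry-plus-comparison mechanism is the missing idea; your generic ``choose $\varphi$ whose $\psi$-component is nonzero'' does not by itself prevent the solution's $\psi$-component from vanishing to higher order at the origin.
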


\begin{theorem}
\label{optimality_higher_dimension}
For $n \ge 4$ and $A^{ij}(x) \equiv \delta_{ij}$, there exists an $\varepsilon_0 = \varepsilon_0(n) \in (0 ,1/2)$ such that for any positive definite matrix $M$ satisfying
$$
(1-\varepsilon_0) \frac{I}{\|I\|} \le \frac{M}{\|M\|} \le (1+\varepsilon_0) \frac{I}{\|I\|},
$$
there exist smooth strictly convex inclusions $D_1, D_2$ inside $\Omega = B_5$ with $D^2(f-g)(0') = M$, and a boundary data $\varphi \in C^\infty(\partial \Omega)$ with $\| \varphi \|_{L^\infty(\partial \Omega)} = 1$, such that the solution $u \in H^1(\widetilde \Omega)$ of \eqref{general_equation} satisfies
$$
\limsup_{x \in \widetilde\Omega, |x| \to 0}  |x'|^{1- \alpha(\lambda_1)}|\nabla u (x)| > \frac{1}{C},
$$
where $\lambda_1$ and $\alpha(\lambda_1)$ are given by \eqref{first_eigenvalue} and \eqref{alpha_lambda_1} with $a(\xi)= \xi^t M \xi$, and $C$ is a positive constant depending only on $n$, $\|M\|$, and upper bounds of $\|\partial D_1 \|_{C^{4}}$ and $\|\partial D_2\|_{C^{4}}$.
\end{theorem}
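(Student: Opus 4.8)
The plan is to build, near the common contact point of the two inclusions, an approximate solution carrying the first eigenmode $\phi_1$ of \eqref{SL_problem}, to turn it on through an explicit Dirichlet datum $\varphi$ on $\partial B_5$, and then to show that the genuine solution inherits the leading singularity; the delicate point is to rule out a cancellation at the critical order. Concretely, I put the contact point at the origin and, for the given $M$, take $D_1,D_2\subset B_5$ to be smooth strictly convex bodies with $\dist(D_1,D_2)=0$ that near $0$ are the graphs of $\pm f$ with $f(x')=\tfrac14\,x'^{t}Mx'+O(|x'|^3)$, so that $g=-f$ and $D^2(f-g)(0')=M$; I arrange $D_1$, $D_2$, and hence $\widetilde\Omega$, to be invariant under each reflection $x_i\mapsto-x_i$ ($i\le n-1$) and under $x_n\mapsto-x_n$ composed with $D_1\leftrightarrow D_2$. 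Since $a(\xi)=\xi^{t}M\xi$ is a small $L^\infty$-perturbation of the constant $\|M\|$, problem \eqref{SL_problem} is a perturbation of the Laplace--Beltrami eigenvalue problem on $\bS^{n-2}$, whose first nonzero eigenvalue is $n-2$ with eigenspace $\operatorname{span}\{\xi_1,\dots,\xi_{n-1}\}$. Restricting to functions odd in $\xi_1$ and even in $\xi_2,\dots,\xi_{n-1}$, analytic perturbation theory gives that in this class $\lambda_1$ is \emph{simple}, with eigenfunction $\phi_1$ close in $C^1$ to $\xi_1$, that the next exponent in the class exceeds $\alpha:=\alpha(\lambda_1)$, and that the constant mode is excluded; since $\phi_1$ is a nonconstant eigenfunction, $\phi_1$ and $\nabla_{\bS^{n-2}}\phi_1$ have no common zero, so any profile $|x'|^{\alpha}\phi_1(x'/|x'|)$ has gradient comparable to $|x'|^{\alpha-1}$ throughout. (This perturbative step is exactly why the statement is confined to $M$ near $I$ and to $n\ge4$; for $n=3$ the one-dimensional weighted problem on $\bS^1$ is explicit, which is what makes Theorem \ref{optimality_3d} unconditional.)

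Next I construct the local profile and the boundary datum. Set $V_0(x')=|x'|^{\alpha}\phi_1(x'/|x'|)$; by \eqref{alpha_lambda_1} and the choice of $\phi_1$ it solves $\dv_{x'}\bigl(|x'|^2 a(x'/|x'|)\nabla_{x'}V_0\bigr)=0$. I lift $V_0$ to $V(x',x_n)$ on $\Omega_{R_0}$ by adding a correction that is quadratic in the transversal variable and tuned to absorb the Neumann conditions on $\Gamma_\pm$, and by replacing the model gap width $|x'|^2 a$ with the true width $f-g$; a direct computation, gaining a power $|x'|^\gamma$ from the $C^{2,\gamma}$ geometry together with the gain from transversal averaging, shows that $V$ solves \eqref{main_problem_narrow} up to errors of strictly lower order near $0$, while $|\nabla V(x)|\asymp|x'|^{\alpha-1}$ on $\Omega_{R_0}\setminus\{x'=0\}$. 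I then extend $V$ to $\widetilde V\in H^1(\widetilde\Omega)$: cut $V$ off in $|x'|$, and on $\widetilde\Omega\setminus\Omega_{R_0/2}$ solve the well-posed mixed boundary value problem with inner Dirichlet data $V|_{\{|x'|=R_0/2\}}$, Neumann data $0$ on the outer part of $\partial D_i$, and an admissible condition on $\partial B_5$, all compatible with the reflection symmetries. Finally I set $\varphi$ to be the rescaling of $\widetilde V|_{\partial B_5}$ with $\|\varphi\|_{L^\infty(\partial B_5)}=1$; the defects of $\widetilde V$ are then either supported in $\Omega_{R_0/2}$, where they are lower order, or concentrated at scale $R_0$, away from $0$.

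Now let $u$ solve \eqref{general_equation} with this $\varphi$ and write $u=\widetilde V+w$. The near-origin defects of $\widetilde V$ contribute to $w$ a part that is $o(|x'|^{\alpha-1})$ near $0$, by rerunning the iteration behind Theorem \ref{touch_thm} with these lower-order sources. The unit-scale defects, however, contribute a part that merely solves \eqref{main_problem_narrow} in $\Omega_{R_0/2}$, hence by Theorem \ref{touch_thm} is only $O(|x'|^{\alpha-1})$ --- the \emph{same} order as $\nabla V$ --- so pointwise closeness of $u$ to $\widetilde V$ is not available, and I instead track the scalar leading coefficient $c_1(u)$ defined by $\overline u(x')=c_1(u)\,|x'|^{\alpha}\phi_1(x'/|x'|)+o(|x'|^{\alpha})$, where $\overline u$ is the transversal average; this refined expansion at the contact point is a sharpening of the proof of Theorem \ref{touch_thm} and is legitimate in the symmetry class, in which $\phi_1$ is the unique mode below $o(|x'|^{\alpha})$. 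The point is that $u\mapsto c_1(u)$ is a bounded linear functional of $\varphi$: it can be read off as a conserved flux of $u$ against the adjoint singular profile $|x'|^{-\alpha-(n-1)}\phi_1(x'/|x'|)$ over any cross-section $\{|x'|=r\}$, and transported out to $\partial B_5$ through $\widetilde\Omega\setminus\Omega_{R_0/2}$. This functional is not identically zero on the symmetry class: were it zero, every symmetric solution would be $o(|x'|^{\alpha})$ near $0$, and a unique-continuation/capacity argument in $\widetilde\Omega\setminus\Omega_{R_0/2}$ applied to the $\varphi$ just constructed --- whose comparison function $\widetilde V$ manifestly carries the first mode --- produces a contradiction. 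Hence for this $\varphi$ one has $c_1(u)\neq0$; since the lower-order modes and the transversal correction (of $C^1$-size $O(|x'|^{\alpha})$) are dominated by the leading term, $|\nabla u(x)|\ge\tfrac12|c_1(u)|\,|\nabla V_0(x')|\ge\tfrac1C\,|x'|^{\alpha-1}$ along, say, a fixed ray, so $\limsup_{x\to0}|x'|^{1-\alpha}|\nabla u(x)|\ge\tfrac1C$ with $C$ depending only on the listed data; recall $\lambda_1\le n-2$ by Lemma \ref{lambda_1_upper_bound}, so $\alpha\in(0,1)$.

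The main obstacle is this last step: ruling out cancellation at the critical order $|x'|^{\alpha-1}$, equivalently showing that $c_1(u)$ survives for a concrete $\varphi$. This requires (i) a sharp asymptotic expansion of $u$ at the contact point --- a refinement of the upper-bound analysis, which is available precisely because the imposed symmetry isolates a one-dimensional leading subspace --- and (ii) the non-vanishing of the linear functional $\varphi\mapsto c_1(u)$, for which the perturbative structure ($M$ close to $I$, so that $\phi_1\approx\xi_1$ is simple with a definite symmetry type) is used in an essential way. It is also the reason the statement is restricted to $n\ge4$ with $M$ near $I$, the case $n=3$ being handled directly in Theorem \ref{optimality_3d}.
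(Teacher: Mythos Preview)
Your overall strategy---symmetrize the domain, isolate the first eigenmode via perturbation theory, extract the leading coefficient $c_1$ of the transversal average of $u$, and show $c_1\neq0$---matches the paper's architecture. The perturbation step (your ``simple eigenvalue in the odd-in-$x_1$ class'') is exactly what the paper does in Proposition~\ref{property_O_prop} and Corollary~\ref{property_o_corollary}, and your asymptotic expansion of $\bar u$ at the contact point corresponds to Step~2 of the proof of Theorem~\ref{optimality_touch_case_thm}.

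The genuine gap is in your non-vanishing argument for $c_1(u)$. You construct an elaborate approximate solution $\widetilde V$ with $c_1(\widetilde V)\neq0$ and set $\varphi=\widetilde V|_{\partial B_5}$, but $\widetilde V$ is not a solution, so nothing prevents the correction $w=u-\widetilde V$ from having $c_1(w)=-c_1(\widetilde V)$. Your claimed ``conserved flux against the adjoint singular profile, transported to $\partial B_5$'' would require building a \emph{genuine} singular solution of the full problem in $\widetilde\Omega$ (Neumann on $\partial D_i$, decaying branch at $0$) whose pairing with $\varphi$ on $\partial B_5$ computes $c_1$; you have not constructed this object, and the ``unique-continuation/capacity argument'' you invoke to rule out identical vanishing of the functional is not specified---it is not clear what would be continued, or through which region, and the approximate solution $\widetilde V$ provides no contradiction on its own.

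The paper bypasses this entirely by a much simpler choice: take $\varphi=x_j$ directly. Because $D_1,D_2$ are strictly convex and symmetric in $x_1,\dots,x_{n-1}$, one checks $\partial_\nu x_j\ge0$ on $\partial D_i\cap\{x_j\ge0\}$, so $x_j$ is a subsolution of \eqref{general_equation} in $\{x_j\ge0\}$ and a supersolution in $\{x_j\le0\}$. The maximum principle then gives $u\ge x_j$ in $\{x_j\ge0\}$ and $u\le x_j$ in $\{x_j\le0\}$, hence $|\bar u(x')|\ge|x_j|$. Since the eigenfunction $Y_{1,j}$ has the same sign as $x_j$ (it is the first Dirichlet eigenfunction on the half-sphere $\{x_j>0\}$), the coefficient $U_{1,j}(r)=\fint a\,\bar u\,Y_{1,j}$ is bounded below by $Cr$, forcing $C_1>0$ in the expansion $U_{1,j}(r)=C_1r^\alpha+O(r^{1+\alpha})$. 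No dual flux, no approximate solution, no unique continuation is needed---just convexity and the comparison principle. You should replace your construction of $\widetilde V$ and the flux/functional argument with this direct sub/supersolution step.
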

In the above, $\|M\|$ and $\|I\|$ denote the standard norm of the matrices. Theorems \ref{optimality_3d} and \ref{optimality_higher_dimension} are consequences of the following more general theorem.

Let $D_1, D_2$ be two strictly convex smooth domains in $B_4 \subset \bR^n$, which are symmetric in $x_j$ for each $1 \le j \le n - 1$, and $\overline D_1 \cap \overline D_2 = \{0\}$. Let $\Omega = B_{5}$ and $\widetilde{\Omega} = \Omega \setminus \{ \overline{ D_1 \cup  D_2} \}$.

\begin{theorem}
\label{optimality_touch_case_thm}
For $n \ge 3$, let $D_1$, $D_2$, and $\Omega$ be as above, $A^{ij}(x) \equiv \delta_{ij}$, $\lambda_1$ and $\alpha(\lambda_1)$ be given by \eqref{first_eigenvalue} and \eqref{alpha_lambda_1} with $a(\xi)= \xi^t \left( D^2(f-g)(0') \right) \xi$. Assume that the eigenspace corresponding to $\lambda_1$ contains a function which is odd in $x_j$ for some $1 \le j \le n - 1$. Let $\varphi = x_j$ and $u \in H^1(\widetilde \Omega)$ be the solution of \eqref{general_equation}. Then
$$
\limsup_{x \in \widetilde\Omega, |x| \to 0}  |x'|^{1- \alpha(\lambda_1)}|\nabla u (x)| > \frac{1}{C},
$$
where $C$ is some positive constant depending only on $n$, a positive lower bound of the eigenvalues of $D^2 (f-g)(0')$, and upper bounds of $\|\partial D_1 \|_{C^{4}}$ and $\|\partial D_2\|_{C^{4}}$.
\end{theorem}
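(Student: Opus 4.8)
The plan is to build an explicit positive lower barrier in the thin channel between $D_1$ and $D_2$, modelled on the first eigenfunction of \eqref{SL_problem}, and to compare it against $u$; the symmetry hypotheses and the choice $\varphi=x_j$ are exactly what make this barrier admissible.

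\emph{Symmetry and the eigenfunction.} Since $D_1,D_2$ and $\Omega=B_5$ are symmetric in each $x_i$, $1\le i\le n-1$, and $\varphi=x_j$ is odd in $x_j$ and even in the remaining tangential variables, uniqueness for \eqref{general_equation} (whose leading operator is $\Delta$, hence reflection–invariant) forces $u$ to have the same parity; in particular $u=0$ on $\{x_j=0\}\cap\widetilde{\Omega}$. The weak maximum principle for the mixed problem on $\widetilde{\Omega}\cap\{x_j>0\}$ (Dirichlet data $x_j\ge 0$ on $\partial B_5$, $u=0$ on $\{x_j=0\}$, zero conormal derivative on $\partial D_i$) gives $u\ge 0$ there, and $u\not\equiv 0$ (otherwise $u\equiv 0$ on $\widetilde{\Omega}$ by odd reflection, contradicting $\varphi=x_j$), so by the strong maximum principle $u>0$ in the interior and, by the Hopf lemma, $\partial_{x_j}u\ge 1/C$ on any fixed compact subset of $\{x_j=0\}\cap\widetilde{\Omega}$. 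The same symmetry forces $M:=D^2(f-g)(0')$ to be diagonal, so $a(\xi)=\xi^t M\xi$ and the operator in \eqref{SL_problem} commute with every reflection $\xi_j\mapsto-\xi_j$. Fix an $L^2(a\,d\sigma)$-normalized eigenfunction $\phi_1$ of \eqref{SL_problem} with eigenvalue $\lambda_1$ that is odd in $\xi_j$ — it exists by hypothesis. Since $\lambda_1$ is the first nonzero eigenvalue, $\phi_1$ has at most two nodal domains by Courant's theorem; its nodal set contains the equator $\{\xi_j=0\}$, whose complement in $\bS^{n-2}$ has exactly two components, so the nodal domains are precisely the two open hemispheres. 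After a choice of sign, $\phi_1>0$ on $\{\xi_j>0\}$ and $c_0|\xi_j|\le\phi_1(\xi)\le C|\xi_j|$ there, with $c_0,C$ controlled by the eigenvalue bounds of $M$ (smoothness of $\phi_1$ together with a quantitative Hopf lemma on $\bS^{n-2}$).

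\emph{The barrier.} In a half-channel $\Omega_\rho\cap\{x_j>0\}$, for $\rho>0$ small depending only on the quantities in the statement, I will construct a subsolution of \eqref{main_problem_narrow} of the two-scale form $\underline u(x)=c\bigl(|x'|^{\alpha(\lambda_1)}\phi_1(x'/|x'|)+v_{\mathrm{corr}}(x)\bigr)$, where $v_{\mathrm{corr}}$ is quadratic in the fiber variable $x_n$, vanishes on $\{x_j=0\}$, and satisfies $|v_{\mathrm{corr}}|+|x'|\,|\nabla v_{\mathrm{corr}}|=o\bigl(|x'|^{\alpha(\lambda_1)}\phi_1(x'/|x'|)\bigr)$ as $|x'|\to 0$. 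This is the corrector construction of the proof of Theorem \ref{touch_thm} run in reverse: one chooses $v_{\mathrm{corr}}$ so that $\partial_{x_n}^2 v_{\mathrm{corr}}$ cancels $\Delta_{x'}(|x'|^{\alpha(\lambda_1)}\phi_1)$ to leading order, and then the fiberwise solvability of the resulting cell problem reduces exactly to the identity $\dv_{x'}\bigl(\tfrac12 (x')^t M x'\,\nabla_{x'}(|x'|^{\alpha(\lambda_1)}\phi_1)\bigr)=0$, which holds by the definitions \eqref{first_eigenvalue}–\eqref{alpha_lambda_1} of $\lambda_1$, $\alpha(\lambda_1)$ and $\phi_1$; the oblique Neumann defects on $\Gamma_+$ and $\Gamma_-$ are balanced against each other through the free linear-in-$x_n$ part of $v_{\mathrm{corr}}$. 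The remaining errors originate from $f-g=\tfrac12 (x')^t M x'+O(|x'|^3)$ — here the $C^4$ bounds on $\partial D_i$ enter — and from the discarded $\Delta_{x'}v_{\mathrm{corr}}$; these are of strictly lower order, so after adding a small fixed multiple of $|x'|^{\alpha(\lambda_1)+\delta}$ (with a suitable $\delta\in(0,1)$) and shrinking $\rho$, $\underline u$ becomes a genuine subsolution: $\dv(A\nabla\underline u)\ge 0$ in $\Omega_\rho\cap\{x_j>0\}$ and $A\nabla\underline u\cdot\nu\ge 0$ on $\Gamma_\pm$.

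\emph{Comparison and the gradient lower bound.} On the Dirichlet part of $\partial(\Omega_\rho\cap\{x_j>0\})$: $u=\underline u=0$ on $\{x_j=0\}$; on the mouth $\{|x'|=\rho\}\cap\widetilde{\Omega}\cap\{x_j\ge 0\}$ the Hopf estimate of the first step gives $u\gtrsim\dist(x,\{x_j=0\})$ near the equator and $u\ge m_\rho>0$ away from it, so, using $\phi_1(\xi)\le C|\xi_j|$, one may fix $c\ge 1/C$ — with $C$ depending only on the listed quantities, through $\rho$, $m_\rho$ and the Hopf constant — so that $\underline u\le u$ on the whole Dirichlet part. The comparison principle for the mixed problem (weak maximum principle together with the Hopf lemma on the $C^4$ faces $\Gamma_\pm$) yields $u\ge\underline u$ in $\Omega_\rho\cap\{x_j>0\}$, hence $u(x)\ge\tfrac{c}{2}|x'|^{\alpha(\lambda_1)}\phi_1(x'/|x'|)$ in $\Omega_{\rho/2}\cap\{x_j>0\}$. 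Now fix $\xi^\ast\in\bS^{n-2}$ with $\xi^\ast_j\ge\tfrac12$ and, for small $r$, join the mid-channel point over $x'=r\xi^\ast$, where $u\ge\tfrac{c}{2}\phi_1(\xi^\ast)r^{\alpha(\lambda_1)}$, to a mid-channel point over $\{|x'|=r\}\cap\{x_j=0\}$, where $u=0$, by a great-circle-type curve lying in $\{|x'|=r\}\cap\widetilde{\Omega}$ of length at most $4r$. Since $A^{ij}\equiv\delta_{ij}$, $u$ is harmonic, hence smooth, along this curve, so the mean value theorem produces a point $y_r$ on it with $|y_r'|=r$ and $|\nabla u(y_r)|\ge c_1 r^{\alpha(\lambda_1)-1}$ for a constant $c_1\ge 1/C$. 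Letting $r\to 0$ gives $\limsup_{x\to 0}|x'|^{1-\alpha(\lambda_1)}|\nabla u(x)|\ge c_1>1/C$, which is the assertion.

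\emph{The main obstacle} is the barrier construction: verifying, uniformly down to $|x'|=0$, that the two-scale $\underline u$ is a subsolution both for the interior equation and on the curved Neumann faces — that is, organizing the asymptotic expansion so that the spectral identity for $(\lambda_1,\alpha(\lambda_1),\phi_1)$ precisely absorbs the leading obstruction, while the geometric discrepancy $f-g-\tfrac12 (x')^t M x'$ and the fiberwise corrector stay subordinate (this is where $C^4$ regularity of the inclusions is used). A secondary, softer difficulty is making the barrier constant quantitative — $c\ge 1/C$ with $C$ depending only on the listed quantities — which rests on quantitative Harnack and Hopf estimates and on the nodal-domain count that pins $\phi_1$'s zero set to the equator.
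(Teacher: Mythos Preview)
Your strategy is genuinely different from the paper's, and the difference matters. The paper never builds an $n$-dimensional barrier with the right Neumann sign on both curved faces. Instead it (i) uses the trivial observation that $x_j$ itself satisfies $\partial_\nu x_j\ge 0$ on the boundary of a convex body, hence $u\ge x_j$ in $\{x_j>0\}$; (ii) passes to the fiber average $\bar u$ and projects onto the eigenfunction $Y_{1,j}$ to obtain a scalar ODE for the radial coefficient $U_{1,j}(r)$; (iii) solves that ODE by reduction of order to get $U_{1,j}(r)=C_1 r^{\alpha}+O(r^{\alpha+1})$ and uses (i) to force $C_1>0$. All the hard analysis is one-dimensional, and the only ``barrier'' is $x_j$, whose Neumann sign is immediate from convexity.

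Your route, by contrast, puts the entire weight on the two-scale subsolution $\underline u = c\big(|x'|^{\alpha}\phi_1(x'/|x'|)+v_{\mathrm{corr}}\big)$ in $\Omega_\rho\cap\{x_j>0\}$, and this is exactly where the argument is incomplete. Concretely: the leading term $W=|x'|^{\alpha}\phi_1$ is \emph{super}harmonic in $\{x_j>0\}$ (already in the isotropic case $\Delta_{x'}W=-2\alpha\,|x'|^{\alpha-2}\phi_1<0$), so you must add a quadratic-in-$x_n$ corrector to flip the interior sign; but this corrector then contributes to $\partial_\nu\underline u$ on \emph{both} $\Gamma_+$ and $\Gamma_-$, and your ``balancing through the linear-in-$x_n$ part'' can at best fix the sum of the two boundary defects, not each separately. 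For the comparison principle you need the Neumann inequality on each face individually. Moreover, the absorbing term $|x'|^{\alpha+\delta}$ you propose has $\partial_\nu\big(|x'|^{\alpha+\delta}\big)\approx -(\alpha+\delta)\,|x'|^{\alpha+\delta-2}\,x'\cdot\nabla f$ on $\Gamma_+$, which is generically of a fixed sign and therefore cannot be used to repair a boundary defect of the opposite sign without destroying the interior inequality. None of this says the barrier cannot be built, but you have not shown it can, and this is precisely the step you yourself flag as the main obstacle. Until that construction is carried out with honest sign checks on $\Gamma_+$, $\Gamma_-$, and in the interior, the proof has a genuine gap. The paper's projection-then-ODE argument avoids all of this by reducing the subsolution question to the one-line inequality $\partial_\nu x_j\ge 0$.
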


We will show in Section 5 (see Theorem \ref{3d_eigenvalue_thm} and Corollary \ref{property_o_corollary}) that the conditions in Theorems \ref{optimality_3d} and \ref{optimality_higher_dimension} imply the condition in Theorem \ref{optimality_touch_case_thm}.

The rest of this paper is organized as follows. In Section 2, we establish some estimates for the associated degenerate elliptic operator
$$L_\varepsilon := \dv\Big[ \Big( \varepsilon + a\Big(\frac{x'}{|x'|}\Big)|x'|^2 \Big) \nabla \Big],$$ which play an important role in proving Theorems \ref{touch_thm} and \ref{gen_thm}. Theorems \ref{touch_thm} and \ref{gen_thm} are proved in Sections 3 and 4, respectively. Some properties of $\lambda_1$, the first nonzero eigenvalue of \eqref{SL_problem}, and its corresponding eigenspace, are established in Section 5. Theorem \ref{optimality_touch_case_thm} is proved in Section 6, and therefore, Theorems \ref{optimality_3d} and \ref{optimality_higher_dimension} follow. Finally in Section 7, we discuss the case when $A_{ij}(0) \neq \delta_{ij}$, and give a reduction to the case when $A_{ij}(0) = \delta_{ij}$.

\section{Some estimates on the associated degenerate elliptic operator}

In this section, we establish some estimates that are useful in proving Theorems \ref{touch_thm} and \ref{gen_thm}. Throughout the section, we work in the domain $B_{R_0} \subset \bR^{n-1}$ for some $R_0 > 0$ and $n \ge 3$. Let $a$ be a function on $\bS^{n-2}$ satisfying
\begin{equation}
\label{a_assumption}
a > 0~~\mbox{a.e.} \quad\text{and}\quad  \ln a \in L^\infty(\bS^{n-2}),
\end{equation}
and let $\lambda_1$ and $\alpha(\lambda_1)$ be given by \eqref{first_eigenvalue} and \eqref{alpha_lambda_1}.

Here are some notation we will be using throughout this paper:
For $\sigma, s \in \bR$, we introduce the following norm
\begin{equation}
                \label{eq2.24}
\| F \|_{\varepsilon, \sigma,s,B_{R_0}}: = \sup_{x' \in B_{R_0}} \frac{|F(x')|}{|x'|^{\sigma} (\varepsilon + |x'|^2)^{1-s}}.
\end{equation}
For any bounded set $\Omega \subset \bR^{n-1}$, we denote $H^1(\Omega, |x'|^2 dx')$ to be the following weighted $H^1$ norm:
$$
\|f\|_{H^1(\Omega, |x'|^2 dx')} = \left( \int_{\Omega} |f|^2|x'|^2 \, dx' \right)^{\frac{1}{2}} + \left( \int_{\Omega} |\nabla f|^2|x'|^2 \, dx' \right)^{\frac{1}{2}}.
$$
For any $0 < \rho < R_0$, we denote
\begin{align*}
(u)_{\partial B_\rho}^a &:= \left(  \int_{\partial B_\rho} a\Big(\frac{x'}{|x'|}\Big) \, d\sigma \right)^{-1} \int_{\partial B_\rho} a\Big(\frac{x'}{|x'|}\Big) u(x') \, d\sigma,\\
(u)_{B_\rho}^a &:= \left(  \int_{ B_\rho} a\Big(\frac{x'}{|x'|}\Big) \, dx' \right)^{-1} \int_{B_\rho} a\Big(\frac{x'}{|x'|}\Big) u(x') \, dx'.
\end{align*}

\begin{proposition}
\label{prop_grad_u_bar_control}
For $n \ge 3$, let $a$ satisfy \eqref{a_assumption}, $\lambda_1$ and $\alpha(\lambda_1)$ be given by \eqref{first_eigenvalue} and \eqref{alpha_lambda_1}. For $\sigma > 1$, $\sigma-1 \neq \alpha(\lambda_1)$,  let $\bar{u} \in H^1(B_{R_0}, |x'|^2 dx')$ be a solution of
\begin{equation*}
\dv\Big[a\Big(\frac{x'}{|x'|}\Big)|x'|^2\nabla \bar u\Big]= \dv F + G \quad\text{in}\,\,B_{R_0}\subset \bR^{n-1},
\end{equation*}
where $F,G \in L^\infty(B_{R_0})$ satisfy
\begin{equation}
\label{FG_bound}
\|F\|_{\varepsilon, \sigma,1,B_{R_0}}< \infty , \quad \|G\|_{\varepsilon, \sigma-1,1,B_{R_0}} < \infty.
\end{equation}
Then $\bar{u} \in C^\beta(B_{R_0})$ for some $\beta\in (0,1)$. Moreover, for any $|x'| \le R_0/2$, we have
\begin{align}
                        \label{eq4.58}
&|\bar u(x') - \bar u(0)|\nonumber \\
 &\le C(\|F\|_{\varepsilon, \sigma,1,B_{R_0}} + \|G\|_{\varepsilon, \sigma-1,1,B_{R_0}} +  \| \bar u - \bar u(0) \|_{L^2(\partial B_{R_0})})  |x'|^{\tilde\alpha}
\end{align}
where $\tilde\alpha := \min \{ \alpha(\lambda_1) , \sigma - 1\}$, and $C$ is some positive constant depending only on $n$, $\sigma$, $R_0$, an upper bound of $\|\ln a\|_{L^\infty}$, and is independent of $\varepsilon$.
\end{proposition}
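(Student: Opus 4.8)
The plan is a Campanato-type decay estimate exploiting the explicit structure of the operator $\dv\big[a(x'/|x'|)|x'|^2\nabla\,\cdot\,\big]$ in polar coordinates. Write $x'=r\xi$ with $r=|x'|$, $\xi\in\bS^{n-2}$, and let $\{(\lambda_k,\phi_k)\}_{k\ge0}$ be the eigenpairs of $-\dv_{\bS^{n-2}}(a\nabla_{\bS^{n-2}}\phi_k)=\lambda_k a\phi_k$, orthonormal with respect to $\langle\cdot,\cdot\rangle_{\bS^{n-2}}$, so that $\lambda_0=0<\lambda_1\le\lambda_2\le\cdots$. A direct computation gives $\dv[a(\xi)|x'|^2\nabla(r^s\phi_k(\xi))]=\big(s^2+(n-1)s-\lambda_k\big)\,a(\xi)\,r^s\phi_k(\xi)$, so the homogeneous equation separates into the Euler equations $r^2c_k''+nrc_k'-\lambda_kc_k=0$ with indicial roots $\alpha_k^{\pm}$; here $\alpha_0^{+}=0$ and $\alpha_k^{+}=\alpha(\lambda_k)\ge\alpha(\lambda_1)>0$ for $k\ge1$, the last inequality by \eqref{alpha_lambda_1} and monotonicity of $\lambda_k$. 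Hence any bounded solution of the homogeneous equation in a ball oscillates around its weighted mean at a rate no slower than $r^{\alpha(\lambda_1)}$, which is where the exponent $\alpha(\lambda_1)$ enters.

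The core is a decay inequality for the excess $\Theta(\rho):=\big(\fint_{B_\rho}|\bar u-(\bar u)^a_{B_\rho}|^2\,dx'\big)^{1/2}$: for $0<r\le\rho\le R_0/2$,
\[
\Theta(r)\le C\Big(\tfrac r\rho\Big)^{\alpha(\lambda_1)}\Theta(\rho)+C\rho^{\,\sigma-1}\big(\|F\|_{\varepsilon,\sigma,1,B_{R_0}}+\|G\|_{\varepsilon,\sigma-1,1,B_{R_0}}\big).
\]
To prove it, compare $\bar u$ on $B_\rho$ with the solution $v$ of the homogeneous equation having $v=\bar u$ on $\partial B_\rho$: expanding $v$ in the $\phi_k$ and applying Parseval yields the \emph{constant-free} decay $\fint_{B_r}|v-(v)^a_{B_r}|^2\le(r/\rho)^{2\alpha(\lambda_1)}\fint_{B_\rho}|v-(v)^a_{B_\rho}|^2$, while energy minimization of $v$ together with a Caccioppoli inequality for $\bar u$ bounds $\Theta_v(\rho)$ by $C\Theta(\rho)$. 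The difference $\bar u-v$ solves the equation with right-hand side $\dv F+G$ and zero boundary values, so an energy estimate combined with a weighted Poincar\'e/Hardy inequality for the weight $a(\xi)|x'|^2$ controls $\fint_{B_{\rho/2}}|\bar u-v|^2$ by the stated norms of $F$ and $G$; since $s=1$ the weights in \eqref{eq2.24} reduce to pure powers $|x'|^{\sigma}$, $|x'|^{\sigma-1}$ with $\sigma>1$, so all relevant integrals near the origin converge and $\varepsilon$ plays no role.

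Iterating finishes the proof. Because $\sigma-1\ne\alpha(\lambda_1)$, the two exponents in the decay inequality are distinct and the standard iteration lemma applies: if $\alpha(\lambda_1)>\sigma-1$ it gives $\Theta(r)\le Cr^{\sigma-1}(\cdots)$, and if $\alpha(\lambda_1)<\sigma-1$ it gives $\Theta(r)\le Cr^{\alpha(\lambda_1)}(\cdots)$, so in all cases $\Theta(r)\le Cr^{\tilde\alpha}\big(\Theta(R_0/2)+\|F\|_{\varepsilon,\sigma,1,B_{R_0}}+\|G\|_{\varepsilon,\sigma-1,1,B_{R_0}}\big)$ with $\tilde\alpha=\min\{\alpha(\lambda_1),\sigma-1\}$. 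A Campanato-type argument upgrades this to \eqref{eq4.58}: the weighted means $(\bar u)^a_{B_\rho}$ form a Cauchy net as $\rho\to0$, whose limit is by definition $\bar u(0)$; summing the geometric estimates gives $|(\bar u)^a_{B_\rho}-\bar u(0)|\le C\rho^{\tilde\alpha}(\cdots)$; and De~Giorgi--Nash--Moser applied to the uniformly elliptic, bounded-measurable-coefficient equation on an annulus $B_\rho\setminus B_{\rho/4}$ controls $\sup_{B_{\rho/2}\setminus B_{\rho/3}}|\bar u-(\bar u)^a_{B_\rho}|$ by $C\Theta(\rho)$ up to the harmless contributions of $F$ and $G$. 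The same De~Giorgi--Nash estimate away from the origin, together with the oscillation control at the origin, gives $\bar u\in C^\beta(B_{R_0})$ for some $\beta\in(0,1)$, and $\|\bar u-\bar u(0)\|_{L^2(\partial B_{R_0})}$ dominates $\Theta(R_0/2)$ up to a constant, producing the right-hand side of \eqref{eq4.58}.

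The main obstacle is obtaining the \emph{sharp} exponent $\tilde\alpha$ with no loss: a crude comparison would leave a constant $C_0>1$ in front of $(r/\rho)^{\alpha(\lambda_1)}$ in the decay inequality, and the iteration would then only yield $r^{\alpha(\lambda_1)-\eta}$; avoiding this forces the decay of the homogeneous part to be extracted with \emph{no} constant (via Parseval), hence careful bookkeeping of every passage between spherical averages, ball averages, and their $a$-weighted versions. A secondary technical point is the weighted Poincar\'e/Hardy inequality for $a(\xi)|x'|^2$ across the degeneracy at the origin --- most delicate when $n=3$, where $|x'|^2$ sits exactly at the borderline Muckenhoupt exponent in $\bR^{n-1}=\bR^2$ --- together with the fact that $\dv F$ is only a distribution, which is handled through the weak formulation or by first subtracting a particular solution $\phi$ of $\dv[a|x'|^2\nabla\phi]=\dv F$ with $|\phi|\le C\|F\|_{\varepsilon,\sigma,1,B_{R_0}}|x'|^{\sigma-1}$.
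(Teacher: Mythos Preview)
Your overall architecture---freeze the coefficient, compare with the homogeneous solution, iterate, and finish by Campanato---matches the paper's, and your observation that the homogeneous solution decays with \emph{no} multiplicative constant via Parseval is exactly the right instinct. But there is a genuine gap in how you control the inhomogeneous part $w=\bar u-v$, and it is precisely the gap you flag in your ``main obstacle'' paragraph without actually closing.

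You propose to bound $\fint_{B_{\rho/2}}|w|^2$ by an energy estimate plus a weighted Poincar\'e/Hardy inequality. An $L^2$ bound on $w$ at scale $\rho$ is not enough: in the decomposition $\Theta(r)\le\Theta_v(r)+\Theta_w(r)$ you must evaluate $\Theta_w$ at the \emph{small} scale $r$, and an $L^2$ average over $B_{\rho/2}$ only controls $\fint_{B_r}|w|^2$ up to a factor $(\rho/r)^{n-1}$, which is useless. If instead you absorb $w$ into the passage $\Theta_v(\rho)\le C\Theta(\rho)$ (as you suggest via ``energy minimization of $v$ plus Caccioppoli for $\bar u$''), that constant $C>1$ multiplies the decay factor $(r/\rho)^{\alpha(\lambda_1)}$, and then the standard iteration only yields $r^{\alpha(\lambda_1)-\eta}$ in the regime $\sigma-1>\alpha(\lambda_1)$---the very loss you said you must avoid. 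So as written the scheme does not reach the sharp $\tilde\alpha$.

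The paper's resolution is different in two coupled ways. First, it works with \emph{spherical} averages $\omega(\rho)=\big(\fint_{\partial B_\rho}a|\bar u-\bar u(0)|^2\big)^{1/2}$, so that $v_1=\bar u$ on $\partial B_R$ exactly and no constant appears when relating the $v_1$-excess on $\partial B_R$ back to $\omega(R)$. Second---and this is the step you are missing---it proves an $L^\infty$ bound $\|v_2\|_{L^\infty(B_R)}\le CR^{\sigma-1}$ for the zero-boundary inhomogeneous part (their Lemma~2.3) by a Moser iteration using the Caffarelli--Kohn--Nirenberg inequality for the weight $|x'|^2$. With $L^\infty$ control one has simultaneously $|v_1(0)-\bar u(0)|=|v_2(0)|\le CR^{\sigma-1}$ and $\big(\fint_{\partial B_\rho}a|v_2|^2\big)^{1/2}\le CR^{\sigma-1}$ \emph{uniformly in} $\rho<R$, so every contribution of $v_2$ lands in the additive error and the leading term keeps coefficient exactly $1$. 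That is what makes the dyadic iteration give $\omega(2^{-k}R_0)\le 2^{-k\tilde\alpha}(\cdots)$ with the sharp $\tilde\alpha$. Your ``secondary technical point'' about the borderline weight at $n=3$ is in fact the heart of this $L^\infty$ lemma, not a side issue.
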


For the proof, we use an iteration argument based on the following two lemmas.

\begin{lemma}
\label{lemma_v1_gen}
For $n \ge 3$, let $a$ satisfy \eqref{a_assumption}, $\lambda_1$ and $\alpha(\lambda_1)$ be given by \eqref{first_eigenvalue} and \eqref{alpha_lambda_1}, and $v_1 \in H^1(B_{R_0}, |x'|^2 dx')$ satisfy
\begin{equation}
\label{equation_v1_gen}
\dv\Big[a\Big(\frac{x'}{|x'|}\Big)|x'|^2\nabla v_1\Big]=0\quad\text{in}\,\,B_{R_0} \subset \bR^{n-1}.
\end{equation}
Then $v_1 \in C^\beta(B_{R_0})$, for some $\beta > 0$ depending only on $n$ and $\| \ln a\|_{L^\infty}$. Moreover, for any $0 < \rho < R \le R_0$, we have
$$
v_1(0) = (v_1)_{\partial B_\rho}^a,
$$
\begin{align}\label{v1_L2_decay}
&\left( \fint_{\partial B_\rho} a\left( \frac{x'}{|x'|} \right)|v_1(x') - v_1(0)|^2 \, d\sigma \right)^{\frac{1}{2}}\nonumber\\
&\le \left(\frac{\rho}{R} \right)^{\alpha(\lambda_1)} \left( \fint_{\partial B_R} a\left( \frac{x'}{|x'|} \right) |v_1(x') - v_1(0)|^2 \, d\sigma \right)^{\frac{1}{2}},
\end{align}
and for any $x' \in B_{R_0/2}$,
\begin{equation}
\label{v1_holder}
|v_1(x') - v_1(0)| \le C R_0^{-\alpha(\lambda_1) - \frac{n-1}{2}} \| v_1 - v_1(0) \|_{L^2(\partial B_{R_0})} |x'|^{\alpha(\lambda_1)},
\end{equation}
where $C$ is some positive constant depending only on $n$ and $\| \ln a\|_{L^\infty}$.
\end{lemma}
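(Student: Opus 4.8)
The plan is to diagonalize \eqref{equation_v1_gen} by separation of variables in polar coordinates $x'=r\xi$, $r=|x'|$, $\xi\in\bS^{n-2}$, since the coefficient $a(x'/|x'|)|x'|^2$ is the product of a radial power and an angular factor; the lemma then follows by reading everything off from a family of Euler ODEs. Let $\{(\lambda_k,\phi_k)\}_{k\ge 0}$ be the eigenpairs of \eqref{SL_problem} normalized so that $\{\phi_k\}$ is orthonormal in $L^2(\bS^{n-2},a\,d\sigma)$, with $0=\lambda_0<\lambda_1\le\lambda_2\le\cdots$ and $\phi_0$ constant; such a system exists because $a\asymp 1$ (so the form is equivalent to the standard one) and $H^1(\bS^{n-2})\hookrightarrow L^2(\bS^{n-2})$ compactly, and by De Giorgi--Nash on $\bS^{n-2}$ each $\phi_k$ is moreover Hölder continuous. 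In polar coordinates \eqref{equation_v1_gen} reads, weakly in $B_{R_0}\setminus\{0\}$,
\[
a(\xi)\bigl(r^2\partial_r^2 v_1+nr\,\partial_r v_1\bigr)+\dv_{\bS^{n-2}}\bigl(a\,\nabla_{\bS^{n-2}}v_1\bigr)=0 .
\]
Since the weight is nondegenerate away from the origin, $v_1\in H^1_{\rm loc}(B_{R_0}\setminus\{0\})$ and for a.e.\ $r$ the slice $v_1(r\,\cdot)$ lies in $H^1(\bS^{n-2})$; set $c_k(r):=\langle v_1(r\,\cdot),\phi_k\rangle_{\bS^{n-2}}$. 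Testing \eqref{equation_v1_gen} with $\psi(r)\phi_k(\xi)$ for $\psi\in C_c^\infty(0,R_0)$ and using the weak eigenvalue relation for $\phi_k$, one finds that $c_k\in H^1_{\rm loc}(0,R_0)$ solves the Euler equation $r^2 c_k''+nr\,c_k'=\lambda_k c_k$ on $(0,R_0)$, whose solutions are spanned by $r^{\alpha_k^{\pm}}$, $\alpha_k^{\pm}=\tfrac12\bigl(-(n-1)\pm\sqrt{(n-1)^2+4\lambda_k}\bigr)$, for $k\ge1$, and by $1$ and $r^{-(n-1)}$ for $k=0$.

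Next I would use the weighted $H^1$ hypothesis to discard the singular branch. Parseval in $\xi$, together with $|\partial_r v_1|\le|\nabla v_1|$ and $a\asymp 1$, gives $\int_0^{R_0}r^{\,n}|c_k'(r)|^2\,dr\le C\,\|v_1\|_{H^1(B_{R_0},|x'|^2dx')}^2<\infty$ for every $k$, with $C=C(n,\|\ln a\|_{L^\infty})$. Since $\alpha_k^-\le-(n-1)$ for all $k\ge 0$, a solution containing the singular branch would have $r^{\,n}|c_k'(r)|^2\gtrsim r^{\,2\alpha_k^-+n-2}$ near $0$ with exponent $\le -n\le -3$, contradicting integrability (this is where $n\ge3$ is used). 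Hence $c_0\equiv c_0$ is constant, $c_k(r)=c_k\,r^{\alpha_k^+}$ for $k\ge1$, and $\alpha_k^+\ge\alpha_1^+=\alpha(\lambda_1)>0$ because $\lambda_k\ge\lambda_1>0$. Therefore $v_1(r\xi)=c_0+\sum_{k\ge1}c_k\,r^{\alpha_k^+}\phi_k(\xi)$ in $L^2(\bS^{n-2})$ for a.e.\ $r$; only the $k=0$ term survives the $a$-weighted spherical average, so $(v_1)_{\partial B_\rho}^a=c_0$ for every $\rho$ (in particular $\rho$-independent), and by Parseval, for $0<\rho<R\le R_0$,
\[
\fint_{\partial B_\rho}a\!\Bigl(\tfrac{x'}{|x'|}\Bigr)|v_1-c_0|^2\,d\sigma=\sum_{k\ge1}|c_k|^2\rho^{2\alpha_k^+}\le\Bigl(\tfrac{\rho}{R}\Bigr)^{2\alpha(\lambda_1)}\sum_{k\ge1}|c_k|^2 R^{2\alpha_k^+}=\Bigl(\tfrac{\rho}{R}\Bigr)^{2\alpha(\lambda_1)}\fint_{\partial B_R}a\!\Bigl(\tfrac{x'}{|x'|}\Bigr)|v_1-c_0|^2\,d\sigma ,
\]
which is \eqref{v1_L2_decay} (once $v_1(0)=c_0$ is justified below) and which, with the previous sentence, yields the mean value identity.

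It then remains to upgrade the $L^2$-on-spheres decay to the pointwise bound \eqref{v1_holder}. On a dyadic annulus $A_\rho:=B_{2\rho}\setminus\overline{B_{\rho/2}}$ with $\rho<R_0/4$, the function $w:=v_1-c_0$ solves an equation whose coefficient matrix, after the rescaling $x'\mapsto\rho x'$, is uniformly elliptic with ellipticity ratio $\le e^{2\|\ln a\|_{L^\infty}}$ and has no lower order terms; De Giorgi--Nash--Moser then gives the asserted interior Hölder continuity with exponent $\beta=\beta(n,\|\ln a\|_{L^\infty})$, together with the scale-invariant local boundedness estimate $\sup_{\partial B_\rho}|w|\le C\bigl(\fint_{A_\rho}|w|^2\bigr)^{1/2}$, $C=C(n,\|\ln a\|_{L^\infty})$. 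Writing the right-hand side in polar coordinates, bounding it by $\sup_{\rho/2\le s\le 2\rho}\fint_{\partial B_s}|w|^2$, passing to the $a$-weighted spherical average (at the cost of a factor $e^{\|\ln a\|_{L^\infty}}$), and invoking the decay \eqref{v1_L2_decay} just proved, one obtains a bound of the form \eqref{v1_holder}; in particular $w(x')\to0$ as $x'\to0$, so $v_1$ is continuous at the origin with $v_1(0)=c_0$, which closes the loop.

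The only step that is not routine bookkeeping is the rigorous reduction to the Euler ODEs: justifying by slicing that $c_k\in H^1_{\rm loc}(0,R_0)$ and satisfies the ODE in the weak sense, and legitimately using each $\phi_k$ — a priori only an $H^1(\bS^{n-2})$ function — as a test function on $\bS^{n-2}$. Once this is in place, the elimination of the singular modes is the elementary integrability computation above, the $L^2$-decay and the mean value property are immediate from Parseval, and the pointwise estimate is a standard scale-invariant interior estimate fed with that decay.
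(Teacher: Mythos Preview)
Your proposal is correct and follows essentially the same route as the paper: separation of variables in polar coordinates, eigenfunction expansion of $v_1(r,\cdot)$ with respect to the $a$-weighted problem on $\bS^{n-2}$, reduction of the coefficients to Euler ODEs, elimination of the singular branches via the weighted $H^1$ hypothesis, then Parseval for \eqref{v1_L2_decay} and a scale-invariant interior estimate on dyadic annuli for \eqref{v1_holder}. The only cosmetic differences are that the paper kills the singular modes using the weighted $L^2$ norm of $v_1$ rather than of $\partial_r v_1$, and it handles the test-function regularity issue you flag by first reducing to smooth $a$ by approximation (which you may want to invoke as well to make that step clean).
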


\begin{proof}
By \cite{FKS}*{Theorem 2.3.12}, $v_1 \in C^\beta(B_{R_0})$ for some $\beta > 0$.
It should be noted that when $n=3$, the weight $|x'|^2$ does not satisfy the $A_2$ condition (in $\bR^{n-1}$) required in \cite{FKS}*{Theorem 2.3.12}. Nevertheless, it satisfies the conditions in \cite{FKS}*{Section 3, pp. 106}. Therefore, the H\"older estimate still holds.
Without loss of generality, it suffices to prove \eqref{v1_L2_decay} and \eqref{v1_holder} for $a \in C^\infty(\bS^{n-2})$ and $R=R_0= 1$. In the polar coordinates, we write $x' = (r ,\xi)$ with $0 < r < 1, \xi \in \bS^{n-2}$. Let $\varphi(r) \in C^\infty_0((0,1))$ and $\psi(\xi) \in C^\infty(\bS^{n-2})$. Multiplying \eqref{equation_v1_gen} by $\varphi \psi$ and integrating by parts gives
\begin{align*}
\int_{B_1} a(\xi) r^2 \nabla v_1 \cdot \nabla (\varphi \psi) &= \int_0^1 \int_{\bS^{n-2}} a r^n \partial_r v_1 \varphi' \psi + ar^{n-2} \nabla_{\bS^{n-2}} v_1 \cdot \nabla_{\bS^{n-2}} \psi \varphi \, d\xi dr\\
&= -\int_{B_1} [ar^2 \partial_{rr} v_1 + nar \partial_r v + \dv_{\bS^{n-2}} (a \nabla_{\bS^{n-2}} v_1)] \varphi \psi.
\end{align*}
Therefore, we can write \eqref{equation_v1_gen} in polar coordinates as
\begin{equation}
\label{equation_v1_gen_polar_coordinate}
\partial_{rr} v_1 + \frac {n}r \partial_{r} v_1 +\frac 1 {a(\xi)r^2}\dv_{\bS^{n-2}}\Big(a(\xi)\nabla_{\bS^{n-2}}v_1 \Big)=0 \quad \mbox{in}~~B_1 \setminus \{0\},
\end{equation}
Let $\lambda_0 = 0$, $\{\lambda_k\}_{k = 1}^\infty$ be the set of all positive eigenvalues of \eqref{SL_problem} satisfying $\lambda_k < \lambda_{k+1}$ for all $k \in \bN \cup \{0\}$. Let $Y_0$ be the positive constant satisfying $\langle Y_0, Y_0 \rangle_{\bS^{n-2}} = 1$, $Y_{k,i}$ be an eigenfunction corresponding to $\lambda_k$, that is,
$$
\dv_{\bS^{n-2}}\Big(a(\xi)\nabla_{\bS^{n-2}}Y_{k,i}\Big) =-\lambda_k a(\xi) Y_{k,i},
$$
and $\{Y_{k,i}\}_{k,i} \cup \{Y_0\}$ forms an orthonormal basis of $L^2(\bS^{n-2})$ with respect to the inner product \eqref{inner_product}.

For $0 < r < 1$, take the decomposition
\begin{equation}
\label{v1_gen_expansion}
v_1(r, \xi) = V_0(r)Y_0 + \sum_{k=1}^\infty \sum_{i=1}^{N(k)} V_{k,i}(r)Y_{k,i}(\xi)\quad \mbox{in}~~L^2(\bS^{n-2}),
\end{equation}
where $V_{0}(r), V_{k,i}(r) \in C^2(0,1)$ are given by
$$
V_{0}(r) = \fint_{\bS^{n-2}} a(\xi) v_1(r, \xi) Y_{0} \, d\xi, \quad V_{k,i}(r) = \fint_{\bS^{n-2}} a(\xi) v_1(r, \xi) Y_{k,i}(\xi) \, d\xi.
$$
Multiplying \eqref{equation_v1_gen_polar_coordinate} by $a(\xi)Y_{0}$ and $a(\xi)Y_{k,i}(\xi)$ respectively and integrate over $\bS^{n-2}$, we see that $V_{0}(r)$ and $V_{k,i}(r)$ satisfy
$$
V_{0}'' + \frac {n}r V_{0}' =0 \quad \mbox{and}\quad V_{k,i}'' + \frac {n}r V_{k,i}' -\frac{\lambda_k}{r^2}V_{k,i} =0 \quad \mbox{in}~~(0,1).
$$
Therefore $V_{0} = c_1 + c_2r^{1-n}$ and $V_{k,i} = c_3 r^{\alpha(\lambda_k)_+} + c_4r^{\alpha(\lambda_k)_-}$ for some constants $c_1, c_2, c_3,$ and $c_4$, where
$$
\alpha(\lambda_k)_\pm := \frac{-(n-1) \pm \sqrt{(n-1)^2 + 4 \lambda_k}}{2}.
$$
Since $v_1 \in H^1(B_{1}, r^2 dx')$, we have for any $0 < \delta < 1$,
\begin{align*}
\infty > \int_{B_1 \setminus B_\delta} a(\xi) v_1^2 r^2 \, dx' &\ge \frac{1}{C} \int_{B_1 \setminus B_\delta} a(\xi) V_0(r)^2 r^2 \, dx'\\
&\ge \frac{1}{C} \int_\delta^1 |c_1 + c_2r^{1-n}|^2 r^{n+1} \, dr,
\end{align*}
which implies $c_2 \equiv 0$. Hence $V_0(r) \equiv V_0(1)$.
Similarly, we have $c_4 \equiv 0$, and hence
$$
V_{k,i}(r) = r^{\alpha(\lambda_k)_+}V_{k,i}(1).
$$
By \eqref{v1_gen_expansion}, for any $0 < \rho < 1$,
\begin{align*}
\fint_{\partial B_\rho} a\left( \frac{x'}{|x'|} \right)|v_1(x') - V_0 Y_0|^2 \, d\sigma  &= \sum_{k=1}^\infty \sum_{i=1}^{N(k)}  |V_{k,i}(\rho)|^2 \\
&\le \rho^{2\alpha(\lambda_1)} \sum_{k=1}^\infty \sum_{i=1}^{N(k)}|V_{k,i}(1)|^2 \\
&= \rho^{2\alpha(\lambda_1)} \fint_{\partial B_1} a\left( \frac{x'}{|x'|} \right)|v_1(x') - V_0 Y_0|^2 \, d\sigma.
\end{align*}
Therefore, $v_1(0) = V_0(0)Y_0 = V_0(\rho) Y_0 = (v_1)_{\partial B_\rho}^a$ for any $\rho \in (0,1)$, and
$$
\left( \fint_{\partial B_\rho} |v_1(x') - v_1(0)|^2 \, d\sigma \right)^{1/2} \le C \rho^{\alpha(\lambda_1)}\left( \fint_{\partial B_1} |v_1(x') - v_1(0)|^2 \, d\sigma \right)^{1/2},
$$
which implies \eqref{v1_holder} by the interior elliptic estimate applied to $B_\rho \setminus \overline{B}_{\rho/2}$.
\end{proof}

\begin{lemma}
\label{lemma_v2}
For $n \ge 3$ and $\sigma > 1$. Let $a$ satisfy \eqref{a_assumption}, $\lambda_1$ and $\alpha(\lambda_1)$ be given by \eqref{first_eigenvalue} and \eqref{alpha_lambda_1}, and $v_2 \in H^1_0(B_{R_0}, |x'|^2 dx')$ satisfy
\begin{equation}
\label{equation_v2}
\dv\Big[a\Big(\frac{x'}{|x'|}\Big)|x'|^2\nabla v_2\Big]=\dv F + G\quad\text{in}\,\,B_{R_0}\subset \bR^{n-1},
\end{equation}
where $F,G \in L^\infty(B_{R_0})$ satisfy \eqref{FG_bound}.
Then we have
$$
\|v_2\|_{L_\infty(B_{R_0})}\le C(\|F\|_{\varepsilon,\sigma,1, B_{R_0}} + \|G\|_{\varepsilon,\sigma-1,1,B_{R_0}}),
$$
where $C>0$ depends only on $n$, $\sigma$, and an upper bound of $\|\ln a\|_{L^\infty}$ and is in particular independent of $\varepsilon$.
\end{lemma}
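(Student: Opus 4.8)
The plan is to prove the $L^\infty$ bound by combining a routine energy estimate with De Giorgi--Nash--Moser local boundedness for the degenerate operator $\dv\!\big[w(x')\nabla\,\cdot\,\big]$, where $w(x'):=a(x'/|x'|)|x'|^2$. First I would observe that, since $s=1$ in \eqref{eq2.24}, the factors $(\varepsilon+|x'|^2)^{1-s}$ are identically $1$, so $\varepsilon$ plays no role; writing $M_F:=\|F\|_{\varepsilon,\sigma,1,B_{R_0}}$ and $M_G:=\|G\|_{\varepsilon,\sigma-1,1,B_{R_0}}$ the hypotheses just say $|F(x')|\le M_F|x'|^\sigma$ and $|G(x')|\le M_G|x'|^{\sigma-1}$ on $B_{R_0}$. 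One clean reduction is to absorb $G$ into the divergence term: set $H(x'):=x'\int_0^1 G(tx')t^{n-2}\,dt$, so that $\dv H=G$ and, because $\sigma>1$ makes $\int_0^1 t^{\sigma+n-3}\,dt$ converge, $|H(x')|\le (\sigma+n-2)^{-1}M_G|x'|^\sigma$. Replacing $F$ by $F+H$, I may assume $G\equiv0$ and $|F|\le M|x'|^\sigma$ with $M\le C(n,\sigma)(M_F+M_G)$. Finally, $\ln a\in L^\infty(\bS^{n-2})$ gives $w\simeq|x'|^2$.

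Next comes the energy estimate. Since $v_2\in H^1_0(B_{R_0},|x'|^2dx')$, I would test $\dv[w\nabla v_2]=\dv F$ against $v_2$ and use Cauchy--Schwarz to get $\int_{B_{R_0}}w|\nabla v_2|^2\le\int_{B_{R_0}}|F|^2w^{-1}\le CM^2\int_{B_{R_0}}|x'|^{2\sigma-2}\,dx'\le CM^2R_0^{2\sigma+n-3}$, the last integral being finite because $2\sigma+n-3>0$ for $n\ge3$, $\sigma>1$. The weighted Poincar\'e inequality for $H^1_0$ functions (standard for $A_2$ weights when $n\ge4$, and available in the framework of \cite{FKS}*{Section 3} when $n=3$) then gives $\int_{B_{R_0}}w|v_2|^2\le CR_0^2M^2R_0^{2\sigma+n-3}$.

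The heart of the argument is local boundedness near the degeneracy point. The weight $|x'|^2$ is an $A_2$ weight in $\bR^{n-1}$ for $n\ge4$, and for $n=3$ it is not $A_2$ in $\bR^2$ but still lies in the framework of \cite{FKS}*{Section 3} (exactly as exploited already in Lemma \ref{lemma_v1_gen}); in either case the weighted Sobolev and Caccioppoli inequalities needed for Moser iteration hold, the homogeneous dimension of $w$ being $D=n+1$. I would run Moser iteration for $\dv[w\nabla v_2]=\dv F=\dv[w\,(Fw^{-1})]$ on the ball $B_{R_0}$ centred at the origin. Since $|Fw^{-1}|\le CM|x'|^{\sigma-2}$, the datum $Fw^{-1}$ belongs to $L^p(B_{R_0},w\,dx')$ precisely when $p(2-\sigma)<n+1$; because $\sigma>1$ this range overlaps $p>n+1=D$, which is exactly the integrability a divergence-form datum must have (relative to a weight of homogeneous dimension $D$) for an $L^\infty$ bound — and this is the one place where the hypothesis $\sigma>1$ is used (for $\sigma\ge2$ one even has $Fw^{-1}\in L^\infty$ and the integrability is automatic). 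Moser iteration then yields a bound of the shape $\sup_{B_{R_0/2}}|v_2|\le C\big[\big(w(B_{R_0})^{-1}\!\int_{B_{R_0}}w|v_2|^2\big)^{1/2}+R_0\big(w(B_{R_0})^{-1}\!\int_{B_{R_0}}|Fw^{-1}|^pw\big)^{1/p}\big]$; inserting the energy estimate, and noting that by homogeneity all powers of $R_0$ combine so that each term is $\le CMR_0^{\sigma-1}$, gives $\sup_{B_{R_0/2}}|v_2|\le CMR_0^{\sigma-1}$. On the remaining annulus $B_{R_0}\setminus B_{R_0/2}$ the weight is comparable to $R_0^2$, so dividing the equation by $R_0^2$ turns it into a uniformly elliptic equation with bounded measurable coefficients and zero data on $\partial B_{R_0}$; classical interior plus up-to-the-boundary De Giorgi estimates then bound $v_2$ there by $C\big(R_0^{-(n-1)/2}\|v_2\|_{L^2(B_{R_0}\setminus B_{R_0/4})}+MR_0^{\sigma-1}\big)\le CMR_0^{\sigma-1}$, again using the energy estimate. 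Combining the two regions, $\|v_2\|_{L^\infty(B_{R_0})}\le CMR_0^{\sigma-1}\le C(M_F+M_G)$ (the factor $R_0^{\sigma-1}$ is harmless since $R_0\le1$ throughout), with $C$ depending only on $n$, $\sigma$, and $\|\ln a\|_{L^\infty}$.

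The step I expect to be the main obstacle is the Moser iteration in the borderline dimension $n=3$: there $|x'|^2$ falls outside the usual $A_2$ theory, so one has to check that the weaker hypotheses of \cite{FKS}*{Section 3} still support the weighted Sobolev and Caccioppoli inequalities and the iteration scheme with an \emph{inhomogeneous} divergence right-hand side, and one must keep careful track of the admissible exponent $p$ relative to the homogeneous dimension $n+1$ — which is precisely where the structural assumption $\sigma>1$ enters.
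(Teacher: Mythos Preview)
Your approach is correct and is, at bottom, the same Moser iteration as the paper's, but the execution differs in a couple of instructive ways.

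First, you absorb $G$ into the divergence term via the antiderivative $H(x')=x'\int_0^1 G(tx')t^{n-2}\,dt$; this is a clean simplification the paper does not make (it carries $F$ and $G$ separately through the iteration). Second, and more substantively, you run the iteration by invoking the FKS framework as a black box and then patch the annulus near $\partial B_{R_0}$ with a separate uniformly elliptic argument. The paper instead uses the single explicit Caffarelli--Kohn--Nirenberg inequality
\[
\|u\|_{L^{2(n+1)/(n-1)}(B_1,|x'|^2dx')}\le C\,\|\nabla u\|_{L^2(B_1,|x'|^2dx')}\quad\text{for }u\in H^1_0(B_1,|x'|^2dx'),
\]
which plays the role of the weighted Sobolev embedding with homogeneous dimension $n+1$. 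Because $v_2\in H^1_0$, the test function $|v_2|^{p-2}v_2$ needs no cutoff, so the iteration is global on $B_{R_0}$ and no separate boundary-annulus step is required. The payoff is that the very obstacle you flagged --- that $|x'|^2$ is not $A_2$ in $\bR^2$ when $n=3$ --- simply does not arise: the CKN inequality holds for all $n\ge 3$ and replaces the FKS machinery entirely. Your route works, but if you want to avoid checking that FKS \S3 supports the inhomogeneous Moser iteration in the borderline dimension, the paper's direct use of CKN is the cleaner path. (A minor point: your closing remark that ``$R_0\le 1$ throughout'' is not part of the hypotheses; the paper handles $R_0$ by an initial scaling to $R_0=1$, which is how the $R_0^{\sigma-1}$ factor is absorbed.)
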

\begin{proof}
Without loss of generality, we assume $R_0 = 1$ and
$$\|F\|_{\varepsilon,\sigma,1,B_{1}} + \|G\|_{\varepsilon,\sigma-1,1,B_{1}} = 1.$$ For $p \ge 2$, we multiply the equation \eqref{equation_v2} with $-|v_2|^{p-2}v_2$ and integrate by parts to obtain
\begin{align*}
&(p-1) \int_{B_{1}} a\Big(\frac{x'}{|x'|}\Big)|x'|^2 |\nabla v_2|^{2}|v_2|^{p-2}\,dx'\\
&= (p-1)\int_{B_{1}}F \cdot \nabla v_2|v_2|^{p-2}\,dx' - \int_{B_{1}} G v_2 |v_2|^{p-2} \, dx'.
\end{align*}
By the definition in \eqref{eq2.24},
$$
|F(x')| \le |x'|^{\sigma}\|F\|_{\varepsilon,\sigma,1, B_{1}} \quad \mbox{and} \quad |G(x')| \le |x'|^{\sigma-1} \|G\|_{\varepsilon,\sigma-1,1,B_{1}} \quad \mbox{for}~~x' \in B_1.
$$
Therefore, by Young's inequality, H\"older's inequality, and using $\sigma > 1$,
\begin{align*}
&(p-1)\left|\int_{B_{1}}F \cdot \nabla v_2|v_2|^{p-2}\,dx' \right|\\
&\le \frac{(p-1)\delta}{2}\int_{B_{1}}|x'|^2 |\nabla v_2|^{2}|v_2|^{p-2}\,dx' +  C(p-1)\int_{B_{1}}|x'|^{2\sigma-2} |v_2|^{p-2}\,dx'\\
&\le \frac{(p-1)\delta}{2}\int_{B_{1}}|x'|^2 |\nabla v_2|^{2}|v_2|^{p-2}\,dx' + \\
&\quad + C(p-1) \left( \int_{B_{1}}|x'|^{(\sigma-1)(n+1+2\mu) - (n-1 + 2\mu)} \, dx' \right)^{\frac{2}{n+1+2\mu}} \times \\
& \quad \times \left( \int_{B_{1}} |x'|^2 |v_2|^{(p-2) \frac{n+1+2\mu}{n-1+2\mu}}  \, dx'\right)^{\frac{n-1+2\mu}{n+1+2\mu}},
\end{align*}
and
\begin{align*}
\left| \int_{B_{1}} G v_2 |v_2|^{p-2} \, dx' \right| &\le C\left( \int_{B_{1}}|x'|^{(\sigma-1)(n+1+2\mu)/2 - (n-1 + 2\mu)} \, dx' \right)^{\frac{2}{n+1+2\mu}} \times \\
& \quad \times \left( \int_{B_{1}} |x'|^2 |v_2|^{(p-1) \frac{n+1+2\mu}{n-1+2\mu}}  \, dx'\right)^{\frac{n-1+2\mu}{n+1+2\mu}},
\end{align*}
where $\mu > 0$ is chosen sufficiently small so that
$$\int_{B_{1}}|x'|^{(\sigma-1)(n+1+2\mu)/2 - (n-1 + 2\mu)} \, dx' < \infty.$$
Hence,
\begin{align}
\label{moser_inequality1}
&\frac{4(p-1)}{p^2} \int_{B_{1}} |x'|^2 \left| \nabla |v_2|^{\frac{p}{2}} \right|^2\,dx' = (p-1)\int_{B_{1}} |x'|^2 |\nabla v_2|^{2}|v_2|^{p-2}\,dx'\nonumber\\
&\le C(p-1)  \|v_2^{p-2}\|_{L^{\frac{n+1+2\mu}{n-1+2\mu}}(B_{1}, |x'|^2dx')} + C \|v_2^{p-1}\|_{L^{\frac{n+1+2\mu}{n-1+2\mu}}(B_{1}, |x'|^2dx')}.
\end{align}
We use the following version of the Caffarelli-Kohn-Nirenberg inequality (see \cite{CKN}):
\begin{equation}
\label{CKN_inequality}
 \|u\|_{L^{\frac{2(n+1)}{n-1}}(B_{1}, |x'|^2dx')} \le C  \|\nabla u\|_{L^{2}(B_{1}, |x'|^2dx')} \quad \forall u \in H_0^1(B_1, |x'|^2dx').
\end{equation}
Taking $p = 2$ in \eqref{moser_inequality1}, we have, by \eqref{CKN_inequality} with $u = |v_2|$ and H\"older's inequality,
\begin{equation}
\label{starting_point}
\|v_2\|_{L^{\frac{2(n+1+2\mu)}{n-1+2\mu}}(B_{1}, |x'|^2dx')} \le C.
\end{equation}
For $p \ge 2$, from \eqref{moser_inequality1}, by \eqref{CKN_inequality} with $u = |v_2|^{\frac{p}{2}}$ and H\"older's inequality,
\begin{align*}
&\|v_2\|_{L^{\frac{(n+1)p}{n-1}}(B_{1}, |x'|^2dx')}^p \le C\| \nabla |v_2|^{\frac{p}{2}} \|_{L^{2}(B_{1}, |x'|^2dx')}^{2}\\
&\le C p^2 \|v_2\|_{L^{\frac{n+1+2\mu}{n-1+2\mu}(p-2)}(B_{1}, |x'|^2dx')}^{p-2} + Cp\|v_2\|_{L^{\frac{n+1+2\mu}{n-1+2\mu}(p-1)}(B_{1}, |x'|^2dx')}^{p-1}\\
&\le \max_{i=1,2} Cp^i \|v_2\|_{L^{\frac{n+1+2\mu}{n-1+2\mu}p}(B_{1}, |x'|^2dx')}^{p-i}.
\end{align*}
By Young's inequality,
\begin{align*}
\|v_2\|_{L^{\frac{(n+1)p}{n-1}}(B_{1}, |x'|^2dx')} &\le \max_{i=1,2} (Cp^i)^{1/p} \left( \frac{p-i}{p} \|v_2\|_{L^{\frac{n+1+2\mu}{n-1+2\mu}p}(B_{1}, |x'|^2dx')} + \frac{i}{p} \right)\\
&\le (Cp^2)^{1/p} \left( \|v_2\|_{L^{\frac{n+1+2\mu}{n-1+2\mu}p}(B_{1}, |x'|^2dx')} + \frac{2}{p} \right).
\end{align*}
For $k \ge 0$, let
$$
p_k = 2 \left( \frac{n+1}{n-1} \cdot \frac{n-1+2\mu}{n+1+2\mu} \right)^k \frac{n+1+2\mu}{n-1+2\mu}.
$$
Iterating the relations above, we have, by \eqref{starting_point},
\begin{align}
\label{Moser_iteration}
\|v_2\|_{L^{p_k}(B_{1}, |x'|^2dx')} &\le \prod_{i=0}^{k-1} \left( Cp_i^2 \right)^{2/p_i}\|v_2\|_{L^{p_0}(B_{1}, |x'|^2dx')}\nonumber\\
&+ \sum_{i=0}^{k-1} \prod_{j=i}^{k-1} \left( Cp_{j}^2 \right)^{2/p_{j}}\frac{4}{p_i} \nonumber\\
&\le C \|v_2\|_{L^{\frac{2(n-1+2\mu)}{n-3+2\mu}}(B_{1}, |x'|^2dx')} + C \sum_{i=0}^{k-1} \frac{1}{p_i} \le C,
\end{align}
where $C$ is a positive constant depending on $n$ and $\sigma$, and is in particular independent of $k$. The lemma is concluded by taking $k \to \infty$ in \eqref{Moser_iteration}.
\end{proof}

Now we are in a position to prove Proposition \ref{prop_grad_u_bar_control}.

\begin{proof}[Proof of Proposition \ref{prop_grad_u_bar_control}]
We first show the (H\"older) continuity of $\bar u$. By Lemma \ref{lemma_v1_gen}, $v_1$ is locally H\"older continuous. In particular, it satisfies the estimate in \cite{FKS}*{Lemma 2.3.11}.
Now for $F$ and $G$ such that
$$
Fr^{-2}\in L^{p}(B_{R_0},r^2\,dx')\quad \text{and}\quad
Gr^{-2}\in L^{p/2}(B_{R_0},r^2\,dx')
$$ with some $p>n+2$, which are satisfied by the condition of the proposition,
we can apply the Moser iteration to get an $L^\infty$ estimate of $v_2$ as in \cite{FKS}*{Lemma 2.3.14}.
By combining these two estimates and using the standard iteration argument, we get the local H\"older continuity of $\bar u$ with a small exponent. The proof of \eqref{eq4.58} below essentially follows this scheme, using the more precise $L^2$ oscillation estimate obtained in Lemma \ref{lemma_v1_gen}.

Without loss of generality, we assume that $\bar{u}(0) = 0$ and $$
\|F\|_{\varepsilon, \sigma,1,B_{R_0}}+\|G\|_{\varepsilon, \sigma-1,1,B_{R_0}} +  \| \bar u  \|_{L^2(\partial B_{R_0})}=1.
$$
Consider
$$
\omega(\rho):=\Big(\fint_{\partial B_\rho}a\Big(\frac{x'}{|x'|}\Big)|\bar u(x')|^2\,d\sigma\Big)^{1/2}.
$$
For $0 < \rho \le R/2 \le R_0/2$, we write $\bar u=v_1+v_2$ in $B_R$, where $v_2$ satisfies
$$
\dv\Big[a\Big(\frac{x'}{|x'|}\Big)|x'|^2\nabla v_2\Big]=\dv F + G\quad\text{in}\,\,B_R
$$
and $v_2=0$ on $\partial B_R$. Thus $v_1$ satisfies
$$
\dv\Big[a\Big(\frac{x'}{|x'|}\Big)|x'|^2\nabla v_1\Big]=0\quad\text{in}\,\,B_R
$$
and $v_1=\bar u$ on $\partial B_R$. By Lemma \ref{lemma_v1_gen},
\begin{align}\label{v1_control}
&\left( \fint_{\partial B_\rho} a\left( \frac{x'}{|x'|} \right)|v_1(x') - v_1(0)|^2 \, d\sigma \right)^{\frac{1}{2}}\nonumber\\
& \le \left(\frac{\rho}{R} \right)^{\alpha(\lambda_1)} \left( \fint_{\partial B_R} a\left( \frac{x'}{|x'|} \right) |v_1(x') - v_1(0)|^2 \, d\sigma \right)^{\frac{1}{2}}.
\end{align}
Since $\tilde v_2(x'):=v_2(Rx')$ satisfies
$$
\dv\Big[a\Big(\frac{x'}{|x'|}\Big)|x'|^2\nabla \tilde v_2\Big]
=\dv \tilde F + \tilde G\quad\text{in}\,\,B_1,
$$
where $\tilde F(x'):=R^{-1}F(Rx')$ and $\tilde{G}(x') := G(Rx')$ satisfy
\begin{align*}
\|\tilde{F}\|_{R^{-2}\varepsilon,\sigma,1,B_1} &= R^{\sigma - 1} \|F\|_{\varepsilon, \sigma,1,B_{R}}, \\
\|\tilde G\|_{R^{-2}\varepsilon, \sigma-1,1,B_{1}} &= R^{\sigma - 1} \|G\|_{\varepsilon, \sigma-1,1,B_{R}},
\end{align*}
we apply Lemma \ref{lemma_v2} with $R_0 = 1$ to $\tilde{v}_2$ to obtain
\begin{equation}
\label{v2_control}
\|v_2\|_{L_\infty(B_R)}\le CR^{\sigma-1}.
\end{equation}
Since $\bar{u}(0) = v_1(0) + v_2(0) = 0$, we have $|v_1(0)|=|v_2(0)|$. Combining \eqref{v1_control} and \eqref{v2_control} yields, using $\bar{u} = v_1 + v_2$, and $\bar{u} = v_1$ on $\partial B_R$,
\begin{align}
\label{omega_iteration}
&\omega(\rho)\nonumber\\
&\le \left( \fint_{\partial B_\rho} a\left( \frac{x'}{|x'|} \right)|v_1(x') - v_1(0)|^2 \, d\sigma \right)^{\frac{1}{2}} + \left( \fint_{\partial B_\rho} a\left( \frac{x'}{|x'|} \right) |v_2(x') - v_2(0)|^2 \, d\sigma \right)^{\frac{1}{2}}\nonumber\\
&\le \left(\frac{\rho}{R} \right)^{\alpha(\lambda_1)} \left( \fint_{\partial B_R}  a\left( \frac{x'}{|x'|} \right) |v_1(x')|^2 \, d\sigma \right)^{\frac{1}{2}} + C |v_1(0)| + C\|v_2\|_{L_\infty(B_R)}\nonumber\\
&\le \left(\frac{\rho}{R} \right)^{\alpha(\lambda_1)} \omega(R) + CR^{\sigma - 1}.
\end{align}
For a positive integer $k$, we take $\rho = 2^{-i-1}R_0$ and $R = 2^{-i}R_0$ in \eqref{omega_iteration} and iterate from $i = 0$ to $k-1$. We have, using $\sigma - 1 \neq \alpha(\lambda_1)$,
\begin{align*}
\omega(2^{-k}R_0) &\le 2^{-k\alpha(\lambda_1)} \omega(R_0) + C\sum_{i=1}^k 2^{-(k-i)\alpha(\lambda_1)} (2^{1-i}R_0)^{\sigma - 1}\\
&\le 2^{-k\alpha(\lambda_1)} \omega(R_0) + C2^{-k\alpha(\lambda_1)} R_0^{\sigma - 1} \frac{1 - 2^{k(\alpha(\lambda_1) -\sigma+1)}}{1 - 2^{\alpha(\lambda_1) - \sigma + 1}}.
\end{align*}
It follows that
$$
\omega(2^{-k}R_0) \le 2^{-k\tilde\alpha} \left( \omega(R_0) + CR_0^{\sigma - 1}\right),
$$
where $\tilde{\alpha} = \min\{ \sigma-1, \alpha(\lambda_1) \}$.
For any $\rho \in (0, R_0/2)$, let $k$ be the positive integer such that $2^{-k-1}R_0 < \rho \le 2^{-k}R_0$. Then
\begin{equation*}
\omega(\rho) \le C \rho^{\tilde\alpha}, \quad \forall \rho \in (0, R_0/2),
\end{equation*}
and hence
$$
\Big(\fint_{\partial B_\rho} |\bar u(x')|^2\,d\sigma\Big)^{1/2} \le C \rho^{\tilde\alpha}, \quad \forall \rho \in (0, R_0/2).
$$
The proposition then follows from the standard interior elliptic estimate applied to $B_\rho \setminus \overline{B}_{\rho/2}$.
\end{proof}

In the remaining part of this section, we consider the case when $\varepsilon>0$.
\begin{proposition}
\label{v_bar_iteration_prop}
For $n \ge 3$, $s > -(n-1)/2$, $0 < t \le 1$, and $\varepsilon > 0$. Let $a$ satisfy \eqref{a_assumption} and be H\"older continuous, $\lambda_1$ and $\alpha(\lambda_1)$ be given by \eqref{first_eigenvalue} and \eqref{alpha_lambda_1}, and $\bar{v} \in H^1(B_{R_0})$ be a solution of
\begin{equation*}
\dv\Big[ \Big( \varepsilon + a\Big(\frac{x'}{|x'|}\Big)|x'|^2 \Big) \nabla \bar v\Big]= \dv F  \quad\text{in}\,\,B_{R_0}\subset \bR^{n-1}
\end{equation*}
satisfying
$$
\|\nabla \bar{v}\|_{\varepsilon,-t,1, B_{R_0}} < \infty,
$$
where $F \in L^\infty(B_{R_0})$ satisfy
$$
\| F \|_{\varepsilon,s,0,B_{R_0}} < \infty.
$$
Then for any $0 < \rho < \frac{1}{4}R \le \frac{1}{4} R_0$, we have
\begin{align}
\label{iteration_gen}
&\Big(\fint_{B_\rho \setminus B_{\rho/2}}|\bar v(x') - (\bar v)^a_{B_\rho \setminus B_{\rho/2}}|^2\,d\sigma\Big)^{1/2}\nonumber\\
&\le C\left( \frac{\rho}{R} \right)^{\alpha(\lambda_1)} \Big(\fint_{B_R \setminus B_{R/2}} |\bar v(x') - (\bar v)^a_{B_R \setminus B_{R/2}}|^2\,d\sigma\Big)^{1/2} \nonumber\\
&\quad + C \left( \frac{R}{\rho} \right)^{\frac{n}{2}}\left(R^{1+s} \left( \frac{\sqrt \varepsilon}{R} + 1 \right) \| F \|_{\varepsilon,s,0,B_{R_0}} + \left( \frac{\varepsilon}{R^2} \right)^{\tilde\alpha(\lambda_1)} R^{1-t}\|\nabla \bar{v}\|_{\varepsilon,-t,1, B_{R_0}} \right),
\end{align}
where
\begin{equation}
\label{tilde_alpha}
\tilde\alpha(\lambda_1) = \left\{
\begin{aligned}
&\left( \alpha(\lambda_1) + \frac{n-1}{2} \right)\Big/2, &&\mbox{when}~~n < 5-2\alpha(\lambda_1);\\
&\mbox{any}~~\alpha < 1,&&\mbox{when}~~n = 5-2\alpha(\lambda_1);\\
&1, &&\mbox{when}~~n > 5-2\alpha(\lambda_1),\\
\end{aligned}
\right.
\end{equation}
and $C$ is some positive constant depending only on $n$, $s$, $t$, $R_0$, and an upper bound of $\|\ln a\|_{L^\infty}$, and is independent of $\varepsilon$.
\end{proposition}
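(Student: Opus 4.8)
The plan is to prove \eqref{iteration_gen} by a harmonic-replacement-and-iteration argument adapted to the degenerate operator, treating the $\varepsilon$-part of $L_\varepsilon$ as a lower-order perturbation of $\dv[a(x'/|x'|)|x'|^2\nabla\,\cdot\,]$ and organizing the estimate dyadically across the transition scale $|x'|\sim\sqrt\varepsilon$. After the rescaling $x'\mapsto x'/R$, which turns $\varepsilon$ into $\varepsilon/R^2$, sends $F$ to $R^{-1}F(R\,\cdot\,)$, and multiplies the weighted norms by explicit powers of $R$, one may assume $R=1$; carrying these powers along is what produces the factors $(\rho/R)^{\alpha(\lambda_1)}$, $R^{1+s}(\sqrt\varepsilon/R+1)$, and $(\varepsilon/R^2)^{\tilde\alpha(\lambda_1)}R^{1-t}$ in the final inequality.

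First I would make the replacement. On $B_R$ let $v_1\in H^1(B_R,|x'|^2dx')$ solve $\dv[a(x'/|x'|)|x'|^2\nabla v_1]=0$ with $v_1=\bar v$ on $\partial B_R$ (Lax--Milgram in the weighted space), and set $w:=\bar v-v_1\in H^1_0(B_R,|x'|^2dx')$. Since $\dv[(\varepsilon+a|x'|^2)\nabla\bar v]=\varepsilon\Delta\bar v+\dv[a|x'|^2\nabla\bar v]$, the function $w$ satisfies
$$
\dv\big[a(x'/|x'|)|x'|^2\nabla w\big]=\dv\big(F-\varepsilon\nabla\bar v\big)\qquad\text{in }B_R,\quad w=0\text{ on }\partial B_R .
$$
Lemma \ref{lemma_v1_gen}, applied to $v_1$ on $B_R$, gives the decay of its annular $L^2$-oscillation at rate $\alpha(\lambda_1)$; together with the passage between spherical averages on $\partial B_\rho$ and solid-annulus averages on $B_\rho\setminus B_{\rho/2}$ (routine, since on such annuli the weight $|x'|^2$ is comparable to $\rho^2$, so the equations for $\bar v$, $v_1$ and $w$ are uniformly elliptic there and standard interior H\"older/energy estimates apply --- this is where the H\"older continuity of $a$ is used), and with the analogous passage for the boundary data of $v_1$ on $\partial B_R$, this accounts for the first term on the right-hand side of \eqref{iteration_gen}. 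It therefore remains to bound the oscillation of $w$.

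Next I would split the right-hand side as $\dv F$ and $-\varepsilon\Delta\bar v=\dv(-\varepsilon\nabla\bar v)$, so $w=w_1+w_2$ correspondingly. For $w_1$ one invokes Lemma \ref{lemma_v2} after cutting $F$ at $|x'|=\sqrt\varepsilon$: where $|x'|\ge\sqrt\varepsilon$ one has $\varepsilon+|x'|^2\sim|x'|^2$ and hence $|F(x')|\lesssim|x'|^{s+2}\|F\|_{\varepsilon,s,0}$, while where $|x'|<\sqrt\varepsilon$ one has $\varepsilon+|x'|^2\sim\varepsilon$ so $|F(x')|\lesssim\varepsilon|x'|^s\|F\|_{\varepsilon,s,0}$, which lies in $L^2(B_{\sqrt\varepsilon})$ precisely because $s>-(n-1)/2$ makes $\int_0^{\sqrt\varepsilon}r^{2s}\varepsilon^2r^{n-2}\,dr$ finite; combining Lemma \ref{lemma_v2} on the outer piece with a weighted energy estimate and the Caffarelli--Kohn--Nirenberg inequality \eqref{CKN_inequality} on the inner piece yields a bound by $R^{1+s}(\sqrt\varepsilon/R+1)\|F\|_{\varepsilon,s,0,B_{R_0}}$. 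The genuinely delicate term is $w_2$: from $|\nabla\bar v(x')|\le|x'|^{-t}\|\nabla\bar v\|_{\varepsilon,-t,1}$ the forcing $-\varepsilon\nabla\bar v$ is only bounded (not $O(|x'|^\sigma)$ with $\sigma>1$) near the origin, so the $\varepsilon=0$ theory of Lemma \ref{lemma_v2} does not apply directly; instead one decomposes $-\varepsilon\nabla\bar v=\sum_j G_j$ with $G_j$ supported in the dyadic annulus at scale $r_j=2^{-j}R$, solves the degenerate equation for each piece, and bounds the oscillation of the $j$-th piece at scale $\rho$ using Lemma \ref{lemma_v1_gen} (a factor $(\rho/r_j)^{\alpha(\lambda_1)}$ when $r_j>\rho$) together with the far-field decay for the degenerate operator in $\bR^{n-1}$ (when $r_j<\rho$), with $\|G_j\|$ controlled by $\varepsilon r_j^{-1-t}\|\nabla\bar v\|_{\varepsilon,-t,1}$ for $r_j>\sqrt\varepsilon$ and by a positive power of $\varepsilon$ times $\|\nabla\bar v\|_{\varepsilon,-t,1}$ for $r_j\le\sqrt\varepsilon$, where in the latter range $L_\varepsilon$ is comparable to $\varepsilon\Delta$, $\bar v$ is nearly harmonic, and the singular part of $\nabla\bar v$ is effectively cut off at the scale $\sqrt\varepsilon$ by the Lipschitz (rate-one) interior estimate. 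Summing these contributions over $j$ is a geometric series whose behaviour is governed by a competition between the degenerate decay rate $\alpha(\lambda_1)$ and the decay rate of the $\varepsilon$-forcing, corrected by an $(n-1)/2$ coming from an $L^2$-to-pointwise step in $\bR^{n-1}$: when the governing exponent is negative the series is dominated by its coarsest scale, giving the Lipschitz value $\tilde\alpha(\lambda_1)=1$ (the case $n>5-2\alpha(\lambda_1)$); when it is positive the series is dominated by its finest scale, giving $\tilde\alpha(\lambda_1)=(\alpha(\lambda_1)+(n-1)/2)/2$ (the case $n<5-2\alpha(\lambda_1)$); and when it vanishes one loses a logarithm, absorbed by taking any $\alpha<1$. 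This yields the bound $(\varepsilon/R^2)^{\tilde\alpha(\lambda_1)}R^{1-t}\|\nabla\bar v\|_{\varepsilon,-t,1}$. The leftover factor $(R/\rho)^{n/2}$ on the error in \eqref{iteration_gen} is simply the price of turning global $L^p$/energy bounds for $w$ on $B_R$ into an average over the small annulus $B_\rho\setminus B_{\rho/2}$; it is harmless because the proposition is always applied with $R/\rho$ bounded and then iterated.

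Assembling the pieces via the triangle inequality and the (uniformly elliptic) equation on dyadic annuli gives \eqref{iteration_gen}. The main obstacle is the estimate for $w_2$, i.e. extracting the sharp exponent $\tilde\alpha(\lambda_1)$: one must simultaneously exploit that $L_\varepsilon$ is close to the degenerate operator where $|x'|\gg\sqrt\varepsilon$ (so the $\alpha(\lambda_1)$-decay of Lemma \ref{lemma_v1_gen} is nearly inherited) and close to $\varepsilon\Delta$ where $|x'|\lesssim\sqrt\varepsilon$ (so $\bar v$ is nearly harmonic), and then carry out the dyadic bookkeeping carefully enough to see which of the three regimes in \eqref{tilde_alpha} occurs; the regime $n<5-2\alpha(\lambda_1)$, where the Lipschitz rate is not attainable, is the one that demands the most care.
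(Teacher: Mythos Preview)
Your overall architecture (harmonic replacement $v_1$, then estimate the remainder) is the same as the paper's, but you organize the remainder in a substantially harder way, and the sketch of the $w_2$ estimate is where the argument is not convincing.

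The paper writes $\bar v=v_1+v_2$ as you do, but then keeps the \emph{full} operator for $v_2$:
\[
\dv\big[(\varepsilon+a|x'|^2)\nabla v_2\big]=\dv F-\varepsilon\Delta v_1\quad\text{in }B_R,\qquad v_2=0\ \text{on}\ \partial B_R,
\]
so that the $\varepsilon$-coercivity is retained on the left, and the perturbative $\varepsilon$-term on the right involves $v_1$, \emph{not} $\bar v$. This is the key point: by Lemma \ref{lemma_v1_gen} plus interior gradient estimates (here is where H\"older continuity of $a$ is actually used), one has $|\nabla v_1(x')|\le C|x'|^{\alpha(\lambda_1)-1}R^{1-t-\alpha(\lambda_1)}\|\nabla\bar v\|_{\varepsilon,-t,1}$ in $B_{R/2}$. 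After rescaling to $B_1$, a single energy identity (test with $v_2$) gives
\[
\int_{B_1}(\tilde\varepsilon+r^2)|\nabla\tilde v_2|^2\le C(\tilde\varepsilon+1)\|\tilde F\|^2+C\|\nabla\tilde v_1\|^2\int_{B_1}\frac{\tilde\varepsilon^2 r^{2\alpha(\lambda_1)-2}}{\tilde\varepsilon+r^2}\,dx',
\]
and the last integral is exactly what produces the trichotomy \eqref{tilde_alpha}: it is $\tilde\varepsilon^{\alpha(\lambda_1)+(n-1)/2}$, $\tilde\varepsilon^2|\ln\tilde\varepsilon|$, or $\tilde\varepsilon^2$ according as $n<5-2\alpha(\lambda_1)$, $=$, or $>$. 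No dyadic decomposition is needed. The passage from this energy bound to the annular $L^2$ oscillation (with the $(R/\rho)^{n/2}$ loss) is the elementary Hardy-type inequality of Lemma \ref{calculus_lemma}, not a CKN inequality.

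By contrast, you write $w$'s equation with the \emph{degenerate} operator $a|x'|^2$ on the left and $-\varepsilon\nabla\bar v$ on the right. Two things go wrong. First, you lose the $\varepsilon$-coercivity, so the natural energy estimate compares $\int|x'|^2|\nabla w_2|^2$ against $\int\varepsilon^2|x'|^{-2}|\nabla\bar v|^2$, and with only $|\nabla\bar v|\le|x'|^{-t}$ this integral can diverge (e.g.\ $n=3$, $t=1$). Second, because your forcing carries only the rate $|x'|^{-t}$ rather than $|x'|^{\alpha(\lambda_1)-1}$, the exponent $\alpha(\lambda_1)$ does not appear in any direct integral computation; you must manufacture it through the dyadic propagation, and your description of that sum (``far-field decay for the degenerate operator in $\bR^{n-1}$'', ``nearly harmonic where $|x'|\lesssim\sqrt\varepsilon$'') is too heuristic to verify that the precise value $(\alpha(\lambda_1)+(n-1)/2)/2$ actually comes out. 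The fix is simply to move the $\varepsilon$ back to the left and replace $\bar v$ by $v_1$ in the remainder term; then the whole $w_2$ analysis collapses to one line.
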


Proposition \ref{v_bar_iteration_prop} will follow from Lemma \ref{lemma_v1_gen} and the following lemma.

\begin{lemma}
\label{calculus_lemma}
For $n \ge 3$, $B_1 \subset \bR^{n-1}$, and $\beta < 1$, let $v \in H^1_0(B_1, |x'|^{2+\beta} dx')$. There exists a positive constant $C$ depending only on $n$ and $\beta$, such that
$$
\sup_{0<r<1} r^n \fint_{\partial B_r} |v|^2 \, d\sigma \le C \int_{B_1} |x'|^{2+ \beta} |\nabla v|^2 \, dx'.
$$
\end{lemma}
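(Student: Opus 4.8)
The plan is to reduce everything to a one-dimensional estimate along rays. Pass to polar coordinates $x' = r\xi$, $r = |x'|\in(0,1)$, $\xi\in\bS^{n-2}$, in which $dx' = r^{n-2}\,dr\,d\xi$, so that $|x'|^{2+\beta}\,dx' = r^{n+\beta}\,dr\,d\xi$ and
\[
r^{n}\fint_{\partial B_r}|v|^2\,d\sigma = r^{n}\fint_{\bS^{n-2}}|v(r\xi)|^2\,d\xi .
\]
Since $|\partial_r v|\le|\nabla v|$, the right-hand side of the claimed inequality dominates $|\bS^{n-2}|^{-1}\int_0^1\!\int_{\bS^{n-2}} r^{n+\beta}|\partial_r v(r\xi)|^2\,d\xi\,dr$. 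Using the elementary inequality $\sup_{r}\fint_{\bS^{n-2}}(\cdot)\,d\xi \le \fint_{\bS^{n-2}}\sup_{r}(\cdot)\,d\xi$, it therefore suffices to prove, for a.e.\ fixed $\xi$, the one-dimensional bound $\sup_{0<r<1} r^{n}|v(r\xi)|^2 \le C(n,\beta)\int_0^1 s^{n+\beta}|\partial_r v(s\xi)|^2\,ds$ and then integrate in $\xi$.

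For the one-dimensional bound I would first take $v\in C_c^\infty(B_1)$, so that $\phi(s):=v(s\xi)$ is smooth on $[0,1]$ and vanishes near $s=1$; hence $\phi(r) = -\int_r^1\phi'(s)\,ds$, and the Cauchy--Schwarz inequality gives $|\phi(r)|^2\le\big(\int_r^1 s^{-n-\beta}\,ds\big)\big(\int_0^1 s^{n+\beta}|\phi'(s)|^2\,ds\big)$. The key step is to check that $C_0(n,\beta):=\sup_{0<r<1} r^{n}\int_r^1 s^{-n-\beta}\,ds<\infty$, which is a short case analysis on the sign of $n+\beta-1$: if $\beta>1-n$ then $r^{n}\int_r^1 s^{-n-\beta}\,ds\le r^{1-\beta}/(n+\beta-1)\le 1/(n+\beta-1)$ since $\beta<1$; if $\beta=1-n$ the integral equals $r^{n}\log(1/r)$, bounded on $(0,1)$; and if $\beta<1-n$ it is at most $1/(1-n-\beta)$. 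This yields the displayed one-dimensional estimate with $C(n,\beta)=C_0(n,\beta)$, and after integrating in $\xi$ and dividing by $|\bS^{n-2}|$ one recovers the constant $C$ of the lemma.

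To handle general $v\in H_0^1(B_1,|x'|^{2+\beta}dx')$, pick $v_k\in C_c^\infty(B_1)$ converging to $v$ in the weighted norm; applying the already-established inequality to $v_k-v_m$ shows that $(v_k)$ is Cauchy for the quantity $\big(\sup_{0<r<1} r^{n}\fint_{\partial B_r}|\cdot|^2\,d\sigma\big)^{1/2}$, so its limit in that norm coincides a.e.\ with $v$, and letting $k\to\infty$ in the inequality for $v_k$ concludes the proof. Equivalently, if the right-hand side of the lemma is finite then for a.e.\ $\xi$ the slice $s\mapsto v(s\xi)$ belongs to a weighted $H^1$ on $(0,1)$, hence is absolutely continuous, so the fundamental-theorem-of-calculus step may be applied directly, while if the right-hand side is infinite there is nothing to prove.

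I do not expect a genuine obstacle here: the only points that require any care are the case distinction for the weight exponent $\beta$ in estimating $\int_r^1 s^{-n-\beta}\,ds$ and the density (or absolute-continuity) argument legitimizing differentiation of $v$ along rays, both of which are routine.
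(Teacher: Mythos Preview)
Your proof is correct and follows essentially the same approach as the paper: polar coordinates, the fundamental theorem of calculus along rays using $v|_{\partial B_1}=0$, and Cauchy--Schwarz with a power weight. The only cosmetic difference is the choice of splitting in the H\"older step: the paper writes $\big(\int_r^1\partial_s v\,ds\big)^2\le\big(\int_r^1|\partial_s v|^2 s^{\beta}\,ds\big)\big(\int_r^1 s^{-\beta}\,ds\big)$, so that $\int_r^1 s^{-\beta}\,ds\le C$ follows immediately from $\beta<1$ and the remaining factor $r^n$ is absorbed via $r^n\le s^n$ on $[r,1]$, thereby avoiding your case analysis on the sign of $n+\beta-1$.
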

\begin{proof}
Without loss of generality, we may assume $v \in C_0^1(B_1)$. Then we have
\begin{align*}
r^2 \int_{\partial B_r} |v|^2 \, d\sigma &\le C\int_{\bS^{n-2}} r^n|v(r,\xi)|^2 \, d\xi = C\int_{\bS^{n-2}} r^n\left( \int_r^1 \partial_s v(s, \xi) \, ds \right)^2 \, d\xi\\
&\le C\int_{\bS^{n-2}} r^n\left( \int_r^1 |\partial_s v(s, \xi)|^2s^\beta \, ds \right) \left( \int_r^1 s^{-\beta} \, ds \right) \, d\xi\\
&\le C \int_0^1  \int_{\bS^{n-2}} s^{n+ \beta} |\partial_s v(s ,\xi)|^2 \, ds d\xi \le C \int_{B_1} |x'|^{2+\beta} |\nabla v|^2 \, dx',
\end{align*}
where in the last two lines, we used H\"older's inequality and the Fubini theorem.
\end{proof}

Now we are in a position to prove Proposition \ref{v_bar_iteration_prop}.

\begin{proof}[Proof of Proposition \ref{v_bar_iteration_prop}]
In this proof, we denote $\alpha = \alpha(\lambda_1)$ and $\tilde{\alpha} = \tilde{\alpha}(\lambda_1)$ for simplicity. Without loss of generality, we may assume $\bar{v}(0) = 0$. Then by the mean value formula,
\begin{equation}
\label{v_bar_pointwise_control}
|\bar{v}(x')| \le |x'|^{1-t}\|\nabla \bar{v}\|_{\varepsilon,-t,1, B_{R_0}} \quad \mbox{for}~~x' \in B_{R_0}.
\end{equation}
For any $0 < R < R_0$, we write $\bar{v} = v_1 + v_2$ so that $v_1 \in H^1(B_R , |x'|^2 dx')$ satisfies
$$
\dv\Big[a\Big(\frac{x'}{|x'|}\Big)|x'|^2\nabla v_1\Big]=0\quad\text{in}\,\,B_{R},
$$
with $v_1 = \bar{v}$ on $\partial B_R$.
By Lemma \ref{lemma_v1_gen} and \eqref{v_bar_pointwise_control}, we have, for $0 < \rho < R$,
$$
\left( \fint_{\partial B_\rho} |v_1(x') - v_1(0)|^2 \, d\sigma \right)^{\frac{1}{2}} \le C \left(\frac{\rho}{R} \right)^{\alpha} R^{1-t} \|\nabla \bar{v}\|_{\varepsilon,-t,1, B_{R_0}}.
$$
Since $a$ is H\"older continuous, by the interior gradient estimate applied in $B_\rho \setminus \overline{B}_{\rho/2}$ for $0 < \rho < R$,
\begin{equation}\label{grad_v1_interior}
|\nabla v_1(x')| \le C\|\nabla \bar{v}\|_{\varepsilon,-t,1, B_{R_0}} |x'|^{\alpha - 1} R^{1-t - \alpha} \quad \mbox{for}~~x' \in B_{R/2} \setminus \{0\}.
\end{equation}
From the maximum principle, Lemma \ref{lemma_v1_gen}, and \eqref{v_bar_pointwise_control}, we know that
$$\|v_1\|_{L^\infty(B_R)} = \sup_{x' \in \partial B_R \cup \{0\}} |v_1(x')| \le CR^{1-t}\|\nabla \bar{v}\|_{\varepsilon,-t,1, B_{R_0}}.$$
Therefore, by the boundary gradient estimate,
\begin{equation}\label{grad_v1_boundary}
|\nabla v_1(x')| \le C\|\nabla \bar{v}\|_{\varepsilon,-t,1, B_{R_0}}  R^{-t} \quad \mbox{for}~~x' \in B_{R} \setminus B_{R/2}.
\end{equation}
In particular, $v_1 \in H^1(B_R)$. Therefore, $v_2 \in H^1_0(B_R)$ satisfies
$$
\dv\Big[ \Big( \varepsilon + a\Big(\frac{x'}{|x'|}\Big)|x'|^2 \Big) \nabla  v_2\Big]= \dv F - \varepsilon \Delta v_1\quad\text{in}\,\,B_{R}.
$$
Let $\tilde{v}_1 (y') = v_1(Ry')$, $\tilde{v}_2 (y') = v_2(Ry')$, $\tilde{F}(y') = R^{-1}F(Ry')$, and $\tilde{\varepsilon} = \varepsilon R^{-2}$. Then by \eqref{grad_v1_interior} and \eqref{grad_v1_boundary},
\begin{equation}
            \label{eq2.34}
 \| \tilde F \|_{\tilde \varepsilon,s,0,B_{1}} = R^{1+ s} \|F\|_{\varepsilon,s,0,B_{R}}, \quad \|\nabla \tilde v_1\|_{\tilde \varepsilon,\alpha - 1,1, B_1} \le CR^{1-t} \|\nabla \bar{v}\|_{\varepsilon,-t,1, B_{R_0}},
\end{equation}
and $\tilde v_2$ satifies
\begin{equation}
\label{v2_tilde_equation}
\dv\Big[ \Big( \tilde \varepsilon + a\Big(\frac{x'}{|x'|}\Big)|x'|^2 \Big) \nabla  \tilde v_2\Big]= \dv \tilde F - \tilde \varepsilon \Delta \tilde v_1\quad\text{in}\,\,B_{1}.
\end{equation}
Denote $x' = (r ,\xi)$ in the polar coordinates, where $0 \le r \le 1$ and $\xi \in \bS^{n-2}$. We multiply \eqref{v2_tilde_equation} by $\tilde v_2$ and integrate by parts to get
\begin{align*}
\int_{B_1} (\tilde \varepsilon+ a(\xi)r^2) |\nabla \tilde v_2|^2 = \int_{B_1} \tilde F\cdot \nabla v_2 + \tilde \varepsilon \int_{B_1} \nabla \tilde v_1 \cdot \nabla \tilde v_2.
\end{align*}
By Young's inequality and the definitions of $\| \tilde F\|= \| \tilde F\|_{\varepsilon,s,0,B_{1}}$ and $\|\nabla \tilde v_1\| = \|\nabla \tilde v_1\|_{\tilde \varepsilon,\alpha - 1,1, B_1}$, since $2s + n-2 > -1$,
\begin{align*}
\int_{B_1} (\tilde \varepsilon+ r^2) |\nabla \tilde v_2|^2  &\le C\| \tilde F \|^2 \int_{B_1} r^{2s}(\tilde \varepsilon + r^2) + C \|\nabla \tilde v_1\|^2  \int_{B_1} \frac{\tilde\varepsilon^2 r^{2\alpha - 2}}{\tilde\varepsilon + r^2}\\
& \le C\| \tilde F \|^2 (\tilde{\varepsilon} + 1) + C \|\nabla \tilde v_1\|^2 \left\{
\begin{aligned}
&\tilde\varepsilon^{\alpha + \frac{n-1}{2}}, &&\mbox{when}~~n < 5-2\alpha;\\
&\tilde\varepsilon^{2}(|\ln \tilde\varepsilon|+1), &&\mbox{when}~~n = 5-2\alpha;\\
&\tilde\varepsilon^{2}, &&\mbox{when}~~n > 5-2\alpha,
\end{aligned}
\right.
\end{align*}
By the inequality above and Lemma \ref{calculus_lemma} with $\beta = 0$,
$$
\sup_{0<r<1} r^n \fint_{\partial B_r} |\tilde v_2|^2 \, d\sigma \le C \left( (\tilde \varepsilon + 1) \| \tilde F \|_{\tilde \varepsilon,s,0,B_{1}}^2 +\tilde \varepsilon^{2\tilde{\alpha}}  \|\nabla \tilde v_1\|_{\tilde \varepsilon,\alpha - 1,1, B_1}^2 \right),
$$
which together with \eqref{eq2.34} implies, for any $0 < \rho < R$,
\begin{align}\label{v2_L2_sphere}
&\fint_{\partial B_\rho} |v_2(x')|^2 \, d\sigma \nonumber\\
&\le C \left( \frac{R}{\rho} \right)^{n}\left(R^{2+2s} \left( \frac{\varepsilon}{R^2} + 1 \right) \| F \|^2_{\varepsilon,s,0,B_{R}} + \left( \frac{\varepsilon}{R^2} \right)^{2\tilde\alpha} R^{2-2t} \|\nabla \bar{v}\|^2_{\varepsilon,-t,1, B_{R_0}} \right).
\end{align}
Denote
\begin{equation}
                        \label{eq2.52}
\omega(\rho):= \Big(\fint_{B_\rho \setminus B_{\rho/2}} a\Big(\frac{x'}{|x'|}\Big) |\bar v(x') - (\bar v)^a_{B_\rho \setminus B_{\rho/2}}|^2\,d\sigma\Big)^{1/2}.
\end{equation}
By Lemma \ref{lemma_v1_gen} and \eqref{v2_L2_sphere}, for any $0 < \rho < R/4$,
\begin{align*}
& \fint_{\partial B_\rho}  a\Big(\frac{x'}{|x'|}\Big)|\bar v(x') - v_1(0)|^2 \, d\sigma\\
&\le C \fint_{\partial B_\rho}  a\Big(\frac{x'}{|x'|}\Big)|v_1(x') - v_1(0)|^2 \, d\sigma  + C \fint_{\partial B_\rho}  a\Big(\frac{x'}{|x'|}\Big)|v_2(x')|^2 \, d\sigma \\
&\le C \left(\frac{\rho}{R} \right)^{2\alpha}  \fint_{\partial B_R}  a\Big(\frac{x'}{|x'|}\Big) |\bar v(x') - (\bar v)^a_{\partial B_R}|^2 \, d\sigma \\
&\quad +C \left( \frac{R}{\rho} \right)^{n}\left(R^{2+2s} \left( \frac{\varepsilon}{R^2} + 1 \right) \| F \|^2_{\varepsilon,s,0,B_{R}} + \left( \frac{\varepsilon}{R^2} \right)^{2\tilde\alpha} R^{2-2t} \|\nabla \bar{v}\|^2_{\varepsilon,-t,1, B_{R_0}} \right).
\end{align*}
Multiplying both sides by $\rho^{n-2}$, integrating over $(\rho/2, \rho)$ and dividing both sides by $\rho^{n-1}$, we have for any $0 < \rho < \frac{r}{4} \le \frac{R}{4}$,
\begin{align*}
\omega(\rho)^2 &\le \fint_{B_\rho \setminus B_{\rho/2}}  a\Big(\frac{x'}{|x'|}\Big)|\bar v(x') - v_1(0)|^2\,d\sigma\\
&\le C\left(\frac{\rho}{r} \right)^{2\alpha}  \fint_{\partial B_r}  a\Big(\frac{x'}{|x'|}\Big) |\bar v(x') - (\bar v)^a_{\partial B_r}|^2 \, d\sigma \\
&\quad +C \left( \frac{r}{\rho} \right)^{n}\left(r^{2+2s} \left( \frac{\varepsilon}{r^2} + 1 \right)\| F \|^2_{\varepsilon,s,0,B_{r}} + \left( \frac{\varepsilon}{r^2} \right)^{2\tilde\alpha} r^{2-2t} \|\nabla \bar{v}\|^2_{\varepsilon,-t,1, B_{R_0}} \right)\\
&\le C\left(\frac{\rho}{r} \right)^{2\alpha}  \fint_{\partial B_r}  a\Big(\frac{x'}{|x'|}\Big) |\bar v(x') - (\bar v)^a_{B_R \setminus B_{R/2}}|^2 \, d\sigma\\
&\quad +C \left( \frac{r}{\rho} \right)^{n}\left(r^{2+2s} \left( \frac{\varepsilon}{r^2} + 1 \right) \| F \|^2_{\varepsilon,s,0,B_{R}} + \left( \frac{\varepsilon}{r^2} \right)^{2\tilde\alpha} r^{2-2t} \|\nabla \bar{v}\|^2_{\varepsilon,-t,1, B_{R_0}} \right).
\end{align*}
Multiplying both sides by $r^{n-2}$, integrating $r$ over $(R/2, R)$, and dividing both sides by $R^{n-1}$ give \eqref{iteration_gen}.
\end{proof}

\section{Proof of Theorem \ref{touch_thm}}
In this section, we give the proof of Theorem \ref{touch_thm}. In this case, $\Gamma_+$ and $\Gamma_-$ touch at the origin. After a suitable rotation in $\bR^{n-1}$, we may assume without loss of generality that $D^2(f-g)(0')$ is a diagonal matrix whose entries are denoted by $a_1, a_2, \ldots, a_{n-1} > 0$. Therefore,
\begin{equation}
                \label{eq2.38}
f(x') - g(x') = \sum_{i=1}^{n-1}a_i x_i^2 + e(x'),
\end{equation}
where $e(x')$ satisfies $|e(x')| \le C|x'|^{2 + \gamma}$.

Let $u \in H^1(\Omega_{R_0})$ be a solution of \eqref{main_problem_narrow}. For $x \in \Omega_{R_0}$, we consider, as in \cite{DLY},
\begin{equation}
\label{u_bar_def}
\bar u(x'):= \fint_{g(x')}^{f(x')} u(x', x_n) \, dx_n.
\end{equation}
It follows from a direct computation that $\bar{u} \in H^1(B_{R_0}, |x'|^2dx')$. For any $0 < R \le R_0/2$, we make a change of variables by setting
\begin{equation*}
\left\{
\begin{aligned}
y' &= x' ,\\
y_n &=  2 R^2 \left( \frac{x_n - g(x')}{f(x') - g(x')} - \frac{1}{2} \right),
\end{aligned}\right.
\quad \forall (x',x_n) \in \Omega_{2R} \setminus \Omega_{R}.
\end{equation*}
This change of variables maps the domain $\Omega_{2R} \setminus \Omega_{R}$ to $Q_{2R, R^2} \setminus Q_{R, R^2}$, where
\begin{equation}\label{Q_s_t}
Q_{s,t}:= \{ y = (y',y_n) \in \bR^n ~\big|~  |y'| < s,  |y_n| < t\}
\end{equation}
for $s,t > 0$. Let $v(y) = u(x)$, so that $v(y)$ satisfies
\begin{equation*}
\left\{
\begin{aligned}
-\partial_i(B^{ij}(y) \partial_j v(y)) &=0 \quad \mbox{in } Q_{2R, R^2} \setminus Q_{R , R^2},\\
 B^{nj}(y) \partial_j v(y) &= 0 \quad \mbox{on } \{z_n = -R^2\} \cup \{z_n = R^2\},
\end{aligned}
\right.
\end{equation*}
where
\begin{align*}
&(B^{ij}(y)) = \frac{2R^2(\partial_x y)(A^{ij}(x(y)))(\partial_x y)^t}{\det (\partial_x y)}\\
&=\frac{ 2 R^2(\partial_x y)(\partial_x y)^t}{\det (\partial_x y)} + \frac{ 2 R^2(\partial_x y)(A^{ij}(x(y)) - \delta_{ij})(\partial_x y)^t}{\det (\partial_x y)} =: (C^{ij}(y)) + (D^{ij}(y))
\end{align*}
and
$$
\det (\partial_x y)=2R^2(f(y')-g(y'))^{-1}.
$$
Note that the top left $(n-1)\times(n-1)$ part of $(C^{ij}(y))$ is $(f(y') - g(y'))I_{(n-1)\times(n-1)}$. Let
$$
\bar{v}(y') = \fint_{-R^2}^{R^2} v(y', y_n) \, dy_n(=\bar u(y')).
$$
Then $\bar{v}$ satisfies in $B_{2R} \setminus B_R \subset \bR^{n-1}$ that
$$
\dv\Big[\Big( f(y') - g(y') \Big)\nabla \bar v \Big] = - \sum_{i = 1}^{n-1} \partial_i \overline{C^{in} \partial_n v}
- \sum_{i=1}^{n-1} \sum_{j=1}^n \partial_i \overline{D^{ij} \partial_j v},
$$
where $\overline{h}$ denotes the average of $h$ with respect to $y_n$ in the interval $(-R^2,R^2)$. Reversing the change of variables, one can see that $\bar{u}$ satisfies in $B_{R_0} \setminus\{0\} \subset \bR^{n-1}$ that
$$
\dv\Big[\Big( f(x') - g(x') \Big)\nabla \bar u \Big] = - \sum_{i = 1}^{n-1} \partial_i \overline{b^i \partial_n u} - \sum_{i=1}^{n-1} \sum_{j=1}^n \partial_i \overline{c^{ij} \partial_j u},
$$
where for $1 \le i \le n-1,$
\begin{align*}
b^i(x) &= (f(x') - g(x')) \partial_i g(x') + (x_n - g(x'))\partial_i(f(x') - g(x')),\\
c^{ij}(x) &= \left(A^{ij}(x) - A^{ij}(0) \right) \left(f(x') - g(x') \right)\quad \mbox{for}~~1 \le j \le n-1,\\
c^{in}(x)&= \sum_{k=1}^{n-1} \left(A^{ik}(x) - A^{ik}(0) \right)b^k + \left(A^{in}(x) - A^{in}(0) \right) \left(f(x') - g(x') \right),
\end{align*}
and $\overline{h}$ denotes the average of $h$ with respect to $x_n$ in the interval $(g(x'),f(x'))$ as in \eqref{u_bar_def}. By the weak formulation and $\bar{u} \in H^1(B_{R_0}, |x'|^2dx')$, one can see that $\bar u$ satisfies the above equation in $B_{R_0}$.  Therefore, $\bar u$ satisfies
\begin{equation}
\label{u_bar_equation_2}
\dv\Big[\Big( \sum_{i=1}^{n-1} a_i |x_i|^2 \Big)\nabla \bar u \Big] = \dv F \quad \mbox{in}~~B_{R_0} \subset \bR^{n-1},
\end{equation}
where $F_i = -\overline{b^i \partial_n u} - e \partial_i \bar{u} - \sum_{j=1}^n \overline{c^{ij}\partial_j u}$ and $e$ is given in \eqref{eq2.38}.
From the assumptions \eqref{A_ij_assumption}, \eqref{fg_0}, and \eqref{fg_1}, we have
$$
|b^i(x)| \le C|x'|^3, \quad |c^{ij}(x)| \le C|x'|^{2 + \gamma} \quad \mbox{for}~~i=1,\ldots,n-1, j = 1,\ldots, n.
$$
Hence
\begin{equation*}
|F(x')| \le C |x'|^{2+\gamma}\overline{|\nabla u(x')|} \quad \mbox{for}~~x'\in B_{R_0}.
\end{equation*}

\begin{proof}[Proof of Theorem \ref{touch_thm}]
Without loss of generality, we assume that $\|u\|_{L^\infty(\Omega_{R_0})} = 1$. Let $\bar{u}$ be defined as in \eqref{u_bar_def}. By \eqref{grad_u_bound_rough} with $\varepsilon = 0$,
$$\|\nabla \bar{u}\|_{0,-s_0,1,B_{R_0}} < \infty,$$
where $s_0 = 1$. Then $\bar{u}$ satisfies the equation \eqref{u_bar_equation_2} with $F$ satisfying
$$\|F\|_{0,2+\gamma-s_0, 1, B_{R_0}} < \infty.$$
By \eqref{grad_u_bound_rough} with $\varepsilon = 0$,
\begin{equation}
\label{u-u_bar}
|u(x', x_n) - \bar{u}(x')| \le (f(x')-g(x')) \max_{x_n\in (g(x') , f(x'))} |\partial_n u(x',x_n)| \le C|x'| \quad \mbox{in}~~\Omega_{R_0}.
\end{equation}
By Proposition \ref{prop_grad_u_bar_control} and \eqref{grad_u_bound_rough}, both $u$ and $\bar{u}$ are H\"older continuous. Indeed, for any $x,y\in \Omega_{R_0}$ such that $|x'|\le |y'|$, we denote $r=|x-y|$. When $r\le |x'|^{2}$, by \eqref{grad_u_bound_rough} and the mean value formula, we have
$$
|u(x)-u(y)|\le Cr|x'|^{-1}\le Cr^{1/2}.
$$
When $r>|x'|^2$, by \eqref{u-u_bar} and using the $C^\beta$ regularity of $\bar u$, we also have
\begin{align*}
|u(x)-u(y)|&\le |u(x)-\bar u(x')|+|u(y)-\bar u(y')|+|\bar u(x')-\bar u(y')|\\
&\le C|x'|+Cr^\beta\le C(r^{1/2}+r^\beta).
\end{align*}
Combining the above two estimates, we see the H\"older continuity of $u$.
Thus, we may further assume, without loss of generality, that $u(0) = \bar u(0) = 0$.
By decreasing $\gamma$ if necessary, we may assume that $1+\gamma-s_0=\gamma<\alpha(\lambda_1)$.
By Proposition \ref{prop_grad_u_bar_control} and \eqref{u-u_bar}, we have, for any $0 < R < R_0/4$,
\begin{align*}
\fint_{\Omega_{4R} \setminus \Omega_{R/2}} |u |^2 \, dx &\le C\fint_{\Omega_{4R} \setminus \Omega_{R/2}} |u - \bar{u}|^2 \, dx + C \fint_{\Omega_{4R} \setminus \Omega_{R/2}} |\bar{u} |^2 \, dx \le C R^{2\tilde\alpha},
\end{align*}
where $\tilde{\alpha} = \min\{\alpha(\lambda_1) , 1+ \gamma - s_0 \}$. We make a change of variables by setting
\begin{equation*}
\left\{
\begin{aligned}
z' &= x' ,\\
z_n &=  2 R^2 \left( \frac{x_n - g(x')}{f(x') - g(x')} - \frac{1}{2} \right),
\end{aligned}\right.
\quad \forall (x',x_n) \in \Omega_{4R} \setminus \Omega_{R/2}.
\end{equation*}
This change of variables maps the domain $\Omega_{4R} \setminus \Omega_{R/2}$ to $Q_{4R, R^2} \setminus Q_{R/2 , R^2}$, where $Q_{s,t}$ is defined as \eqref{Q_s_t}. Let $w(z) = u(x)$, so that $w(z)$ satisfies
\begin{equation*}
\left\{
\begin{aligned}
-\partial_i(b^{ij}(z) \partial_j w(z)) &=0 \quad \mbox{in } Q_{4R, R^2} \setminus Q_{R/2 , R^2},\\
 b^{nj}(z) \partial_j w(z) &= 0 \quad \mbox{on } \{z_n = -R^2\} \cup \{z_n = R^2\},
\end{aligned}
\right.
\end{equation*}
where
$$
(b^{ij}(z)) = \frac{(\partial_x z)(A^{ij}(x(z)))(\partial_x z)^t}{\det (\partial_x z)}.
$$
It is straightforward to verify that
$$
\frac{I}{C} \le b(z) \le CI \quad \mbox{and}\quad \|b\|_{C^\gamma (Q_{4R, R^2} \setminus Q_{R/2 , R^2})} \le CR^{-\gamma}.
$$
Let $\tilde{b}^{ij}(z) = b^{ij}(R z)$ and $\tilde{w}(z) = w(R z)$. Then $\tilde{w}$ satisfies
\begin{equation*}
\left\{
\begin{aligned}
-\partial_i(\tilde b^{ij}(z) \partial_j \tilde w(z)) &=0 \quad \mbox{in } Q_{4, R} \setminus Q_{1/2, R},\\
\tilde b^{nj}(z) \partial_j \tilde w(z) &= 0 \quad \mbox{on } \{z_n = -R\} \cup \{z_n = R\},
\end{aligned}
\right.
\end{equation*}
with
$$
\frac{I}{C} \le \tilde{b} \le CI \quad \mbox{and} \quad \| \tilde{b} \|_{C^{\gamma} (Q_{4, R} \setminus Q_{1/2, R})} \le C.
$$
Now we define
$$
S_l:= \left\{z \in \bR^n ~\big|~  1/2 < |z'| < 4,~ (2l-1) R < z_n < (2l+1) R \right\}
$$
for any integer $l$, and
$$
S_{s,t}^m: = \left\{z \in \bR^n ~\big|~  s < |z'| < t,~ |z_n| < m\right\}.
$$
Note that $ Q_{4, R} \setminus Q_{1/2, R} = S_0$. We take the even extension of $\tilde w$ with respect to $y_n=R$ and then take the periodic extension (so that the period is equal to $4R$).
More precisely,
we define, for any $l \in \bZ$, a new function $\hat{w}$ by setting
$$\hat{w}(z) := \tilde{w}\left(z', (-1)^l\left(y_n - 2l R\right)\right), \quad \forall z \in S_l.$$
We also define the corresponding coefficients, for $k = 1,2, \cdots, n-1$,
$$\hat{b}^{nk}(z)=\hat{b}^{kn}(z) := (-1)^l\tilde{b}^{nk}\left(z', (-1)^l\left(z_n - 2l \rho\right)\right),  \quad \forall z \in S_l,$$
and for other indices,
$$\hat{b}^{ij}(z) := \tilde{b}^{ij}\left(z', (-1)^l\left(z_n - 2l \rho\right)\right), \quad \forall y \in S_l.$$
Then $\hat{w}$ and $\hat{c}^{ij}$ are defined in the infinite ring $Q_{4, \infty} \setminus Q_{1/2, \infty}$. In particular, $\hat{w}$ satisfies the equation
$$
\partial_i (\hat{b}^{ij} \partial_j \hat{w}) = 0 \quad \mbox{in}\,\,S_{1/2,4}^2.
$$
By \cite{LN}*{Proposition 4.1} and \cite{LY2}*{Lemma 2.1}, we have
$$
\| \nabla \hat w \|_{L^\infty(S_{1,2}^1)} \le C \| \hat w\|_{L^2(S_{1/2,4}^2)} \le CR^{\tilde{\alpha}},
$$
which, after reversing the changes of variables, implies,
$$
\| \nabla u\|_{L^\infty(\Omega_{2R} \setminus \Omega_R)} \le CR^{\tilde{\alpha}-1}.
$$
Therefore, we have improved the upper bound $|\nabla u(x)| \le C|x'|^{-s_0}$ to $|\nabla u(x)| \le C|x'|^{\tilde{\alpha}-1}$, where $\tilde{\alpha}-1 = \min\left\{\alpha(\lambda_1) - 1, -s_0 + \gamma \right\}$. If $-s_0 + \gamma < \alpha(\lambda_1) - 1$, we take $s_1 = s_0 - \gamma$ and repeat the argument above. We may decrease $\gamma$ if necessary so that $\alpha(\lambda_1) - 1 \neq -s_0 +k\gamma$ for any $k=1,2,\ldots$.
After repeating the argument finitely many times, we obtain the estimate \eqref{main_goal_touch_case}.
\end{proof}

\section{Proof of Theorem \ref{gen_thm}}
In this section, we give the proof of Theorem \ref{gen_thm}. Without loss of generality, we assume $\| u\|_{L^\infty(\Omega_{R_0})} = 1$.
We perform a change of variables by setting
\begin{equation}\label{x_to_y_2}
\left\{
\begin{aligned}
y' &= x' ,\\
y_n &= 2 \varepsilon \left( \frac{x_n - g(x') + \varepsilon/2}{\varepsilon + f(x') - g(x')} - \frac{1}{2} \right),
\end{aligned}\right.
\quad \forall (x',x_n) \in \Omega_{R_0}.
\end{equation}
This change of variables maps the domain $\Omega_{R_0}$ to $Q_{R_0, \varepsilon}$, where $Q_{s,t}$ is defined as in \eqref{Q_s_t}. Moreover,
\begin{equation}
\label{det_d_x_y}
\det(\partial_xy) = 2\varepsilon (\varepsilon + f(x') - g(x'))^{-1}.
\end{equation}
After a suitable rotation in $\bR^{n-1}$, we may assume without loss of generality that $D^2(f-g)(0')$ is a diagonal matrix whose entries are denoted by $a_1, a_2, \ldots, a_{n-1}>0$ and \eqref{eq2.38} holds.
Let $u \in H^1(B_{R_0})$ be a solution of \eqref{main_problem_narrow}, and let $v(y) = u(x)$. Then $v$ satisfies
\begin{equation}\label{equation_v_2}
\left\{
\begin{aligned}
-\partial_i(b^{ij}(y) \partial_j v(y)) &=0 \quad \mbox{in } Q_{R_0, \varepsilon},\\
b^{nj}(y) \partial_j v(y) &= 0 \quad \mbox{on } \{y_n = -\varepsilon\} \cup \{y_n = \varepsilon\}
\end{aligned}
\right.
\end{equation}
with $\|v\|_{L^\infty(Q_{R_0, \varepsilon})} = 1$,
where the matrix $(b^{ij}(y))$ is given by
\begin{align*}
&(b^{ij}(y))\\
&= \frac{ 2 \varepsilon(\partial_x y)(A^{ij}(x(y)))(\partial_x y)^t}{\det (\partial_x y)} = \frac{ 2 \varepsilon(\partial_x y)(\partial_x y)^t}{\det (\partial_x y)} + \frac{ 2 \varepsilon(\partial_x y)(A^{ij}(x(y)) - \delta_{ij})(\partial_x y)^t}{\det (\partial_x y)}\\
&= \begin{pmatrix}
\varepsilon +  \sum_{j=1}^{n-1} a_j y_j^2 &0 &\cdots &0 &b^{1n}\\
0 &\varepsilon + \sum_{j=1}^{n-1} a_j y_j^2 &\cdots &0 &b^{2n}\\
\vdots &\vdots &\ddots &\vdots &\vdots\\
0 &0 &\cdots &\varepsilon + \sum_{j=1}^{n-1} a_j y_j^2 &b^{n-1,n}\\
b^{n1} &b^{n2} &\cdots &b^{n,n-1} & \frac{\sum_{j=1}^{n-1} |b^{jn}|^2 + 4\varepsilon^2}{\varepsilon + f(y') - g(y')}
\end{pmatrix}\\
&\quad+
\begin{pmatrix}
e^1 &0 &\cdots &0 &0\\
0 &e^2 &\cdots &0 &0\\
\vdots &\vdots &\ddots &\vdots &\vdots\\
0 &0 &\cdots & e^{n-1} &0\\
0 &0 &\cdots & 0 &0
\end{pmatrix}
+
\begin{pmatrix}
c^{11} &c^{12} &\cdots &c^{1n}\\
c^{21} &c^{22} &\cdots &c^{2n}\\
\vdots &\vdots &\ddots &\vdots\\
c^{n1} &c^{n2} &\cdots &c^{nn}
\end{pmatrix},
\end{align*}
and for $i = 1, \ldots , n-1$,
\begin{align*}
b^{ni} &= b^{in} = -2\varepsilon \partial_i g(y') - (y_n + \varepsilon)\partial_i(f(y') - g(y')),\\
e^i &= f(y') - g(y') - \sum_{j=1}^{n-1} a_j y_j^2,
\end{align*}
the matrix $\{c^{ij}\}$ is given by
$$
\frac{ 2 \varepsilon(\partial_x y)(A^{ij}(y) - \delta_{ij})(\partial_x y)^t}{\det (\partial_x y)}.
$$
By \eqref{A_ij_assumption}, \eqref{fg_0}, and \eqref{fg_1}, we know for $i = 1, \ldots , n-1$,
\begin{equation}
\label{coefficient_bounds_3}
|b^{ni}(y)| = |b^{in}(y)| \le C \varepsilon |y'|\quad \mbox{and} \quad |e^i(y')| \le C|y'|^{2+\gamma},
\end{equation}
and for $i = 1, \ldots , n-1$, $j = 1, \ldots , n-1$,
\begin{align}
\label{coefficient_bounds_c_ij}
|c^{ij}(y)| &\le C ( \varepsilon+|y'|^2) \left( |y'|^{\gamma}+ ( \varepsilon + |y'|^2)^{\gamma}\right), \nonumber\\
|c^{in}(y)| &\le C \varepsilon\left(|y'|^{\gamma}+ ( \varepsilon + |y'|^2)^{\gamma}\right).
\end{align}
Note that $e^1(y), \ldots, e^{n-1}(y)$ depend only on $y'$ and are independent of $y_n$.
We define
\begin{equation}
\label{v_bar_def}
\bar{v}(y') := \fint_{-\varepsilon}^\varepsilon v(y',y_n)\, dy_n,
\end{equation}
It is straightforward to verify that $\bar{v}$ satisfies in $B_{R_0}\subset \bR^{n-1}$,
\begin{equation}\label{equation_v_bar_4}
\dv \Big[\Big(\varepsilon+\sum_{i=1}^{n-1} a_i y_i^2 \Big)\nabla \bar v \Big]=-\sum_{i=1}^{n-1}\partial_i\overline{b^{in}\partial_n v} - \sum_{i=1}^{n-1} \partial_i(e^i \partial_i \bar v) - \sum_{i=1}^{n-1} \sum_{j=1}^n \partial_i\overline{c^{ij}\partial_j v},
\end{equation}
with $\| \bar{v} \|_{L^\infty(B_{R_0})} \le 1$, where $\overline{b^{in}\partial_n v}$ and $\overline{c^{ij}\partial_j v}$ are the average of $b^{in}\partial_n v$ and $c^{ij}\partial_j v$ with respect to $y_n$ in $(-\varepsilon,\varepsilon)$ as in \eqref{v_bar_def}.

\begin{proof}[Proof of Theorem \ref{gen_thm}]
We make the change of variables \eqref{x_to_y_2}, and let $v(y) = u(x)$. Then $v$ satisfies \eqref{equation_v_2}. Let $\bar{v}$ be defined as in \eqref{v_bar_def}. By \eqref{grad_u_bound_rough},
$$\|\nabla \bar{v}\|_{\varepsilon,-2s_0,1,B_{R_0}} < \infty,$$
where $s_0 = \frac{1}{2}$. Then $\bar{v}$ satisfies the equation \eqref{equation_v_bar_4}, that is
$$
\dv \big[(\varepsilon+ a(\xi)r^2)\nabla \bar v \big]= \dv F  \quad\text{in}\,\,B_{R_0}\subset \bR^{n-1}
$$
with
$$
F_i = - \overline{b^{in}\partial_n v} - e^i \partial_i \bar v - \overline{c^{ij}\partial_j v},\quad i=1,\ldots,n-1.
$$
By \eqref{grad_u_bound_rough} and \eqref{det_d_x_y},
\begin{equation}
\label{grad_n_v_bound}
|\partial_n v| \le C(\varepsilon+|y'|^2)^{1-s_0}/\varepsilon \quad \mbox{and} \quad |\nabla_{y'} v| \le C(\varepsilon + |y'|^2)^{-s_0}\quad \mbox{in}~~Q_{R_0,\varepsilon}.
\end{equation}
Therefore, by \eqref{coefficient_bounds_3} and \eqref{coefficient_bounds_c_ij},
$$\|F\|_{\varepsilon, \gamma- 2s_0,0, B_{R_0}} < \infty.$$
Denote the left-hand side of \eqref{iteration_gen} by $\omega(\rho)$. By Proposition \ref{v_bar_iteration_prop} with $s = \gamma - 2s_0$ and $t = 2s_0$, for $0 < \rho < R/4\le R_0/4$,
$$
\omega(\rho) \le C \left(\frac{\rho}{R} \right)^{\alpha(\lambda_1)} \omega(R) + C \left( \frac{R}{\rho} \right)^{\frac{n}{2}} R^{1-2s_0}\left(R^{\gamma} \left( \frac{\sqrt \varepsilon}{R} + 1 \right) + \left( \frac{\varepsilon}{R^2} \right)^{\tilde\alpha(\lambda_1)} \right),
$$
where $\tilde\alpha(\lambda_1) > \alpha(\lambda_1)$ is given by \eqref{tilde_alpha} and $\omega$ is defined in \eqref{eq2.52}. Fix a $\bar \mu > 0$ satisfying $\bar{\mu} \tilde\alpha(\lambda_1) < \gamma$. For any $0 <\mu < \bar \mu$, $0 < \rho < R/4$, and $\varepsilon^{\frac{1}{2+\mu}} < R/4 \le R_0/4$, we have
$$
\omega(\rho) \le C \left(\frac{\rho}{R} \right)^{\alpha(\lambda_1)} \omega(R) + C \left( \frac{R}{\rho} \right)^{\frac{n}{2}} R^{1-2s_0 + \mu \tilde\alpha(\lambda_1)}.
$$
If $1 - 2s_0 + \mu \tilde\alpha(\lambda_1) < \alpha(\lambda_1)$, by \cite{GiaMar}*{Lemma 5.13},
\begin{equation}
\label{omega_rho_decay}
\omega(\rho) \le C \rho^{1 - 2s_0 + \mu \tilde\alpha(\lambda_1)} \quad \forall~ \varepsilon^{\frac{1}{2+\mu}} \le \rho < R_0.
\end{equation}
By \eqref{grad_n_v_bound},
\begin{equation}
\label{v-v_bar_2}
|v(y', y_n) - \bar{v}(y')| \le 2\varepsilon \max_{y_n\in (- \varepsilon , \varepsilon)} |\partial_n v(y',y_n)| \le C (\varepsilon + |y'|^2)^{1-s_0} \quad \mbox{in}~~Q_{R_0,\varepsilon}.
\end{equation}
Therefore, by \eqref{omega_rho_decay} and \eqref{v-v_bar_2},
\begin{align*}
&\left( \fint_{Q_{\rho ,\varepsilon} \setminus Q_{\rho/2, \varepsilon}} \left| v(y) - (v)_{Q_{\rho ,\varepsilon} \setminus Q_{\rho/2, \varepsilon}}  \right|^2 \, dy \right)^{\frac{1}{2}}\\
&\le \left( \fint_{Q_{\rho ,\varepsilon} \setminus Q_{\rho/2, \varepsilon}}  |v - \bar{v}| ^2 \, dy \right)^{\frac{1}{2}} + \omega(\rho)\\
&\le C \rho^{1 - 2s_0 + \mu \tilde\alpha(\lambda_1)} \quad \forall~ \varepsilon^{\frac{1}{2+\mu}} \le \rho < R_0,
\end{align*}
where
$$
(v)_{Q_{\rho ,\varepsilon} \setminus Q_{\rho/2, \varepsilon}} := \fint_{Q_{\rho ,\varepsilon} \setminus Q_{\rho/2, \varepsilon}} v(y) \, dy.
$$
This implies
\begin{equation}
\label{u-u_average_L2_control}
\left( \fint_{\Omega_{4\rho} \setminus \Omega_{\rho/2}} \left| u(x) - (u)_{\Omega_{4\rho} \setminus \Omega_{\rho/2}}  \right|^2 \, dx \right)^{\frac{1}{2}} \le C \rho^{1 - 2s_0 + \mu \tilde\alpha(\lambda_1)} \quad \forall~ \varepsilon^{\frac{1}{2+\mu}} \le \rho < R_0/4.
\end{equation}
We will show that
\begin{equation}
\label{one_bootstrap}
|\nabla u(x)| \le C(\varepsilon + |x'|^2)^{-\frac{1}{2} + \frac{1- 2s_0 + \mu \tilde\alpha(\lambda_1)}{2 + \mu}} \quad \mbox{for}~~x\in \Omega_{R_0/4}.
\end{equation}
For any $\varepsilon^{\frac{1}{2+\mu}} \le \rho < \frac{R_0}{4}$, we make a change of variables by setting
\begin{equation}\label{x_to_z}
\left\{
\begin{aligned}
z' &= x' ,\\
z_n &=  2 \rho^2 \left( \frac{x_n - g(x') + \varepsilon/2}{\varepsilon + f(x') - g(x')} - \frac{1}{2} \right),
\end{aligned}\right.
\quad \forall (x',x_n) \in \Omega_{4\rho} \setminus \Omega_{\rho/2}.
\end{equation}
This change of variables maps the domain $\Omega_{4\rho} \setminus \Omega_{\rho/2}$ to $Q_{4\rho, \rho^2} \setminus Q_{\rho/2 , \rho^2}$. Let $w(z) = u(x) - (u)_{\Omega_{4\rho} \setminus \Omega_{\rho/2}}$, so that $w(z)$ satisfies
\begin{equation*}
\left\{
\begin{aligned}
-\partial_i(d^{ij}(z) \partial_j w(z)) &=0 \quad \mbox{in } Q_{4\rho, \rho^2} \setminus Q_{\rho/2 , \rho^2},\\
 d^{nj}(z) \partial_j w(z) &= 0 \quad \mbox{on } \{z_n = -\rho^2\} \cup \{z_n = \rho^2\},
\end{aligned}
\right.
\end{equation*}
where
$$
(d^{ij}(z)) = \frac{(\partial_x z)(A^{ij}(x(z)))(\partial_x z)^t}{\det (\partial_x z)}.
$$
Let $\tilde{d}^{ij}(z) = d^{ij}(\rho z)$ and $\tilde{w}(z) = w(\rho z)$. Then $\tilde{w}$ satisfies
\begin{equation*}
\left\{
\begin{aligned}
-\partial_i(\tilde d^{ij}(z) \partial_j \tilde w(z)) &=0 \quad \mbox{in } Q_{4, \rho} \setminus Q_{1/2, \rho},\\
\tilde d^{nj}(z) \partial_j \tilde w(z) &= 0 \quad \mbox{on } \{z_n = -\rho\} \cup \{z_n = \rho\}.
\end{aligned}
\right.
\end{equation*}
It is straightforward to verify that
$$
\frac{I}{C} \le \tilde{d} \le CI \quad \mbox{and} \quad \| \tilde{d} \|_{C^{\gamma} (Q_{4, \rho} \setminus Q_{1/2, \rho})} \le C.
$$
Using the similar ``flipping argument" as in the proof of Theorem \ref{touch_thm}, we have, by \eqref{u-u_average_L2_control},
\begin{equation}
\label{grad_u_rho_outside}
|\nabla u(x)| \le C|x'|^{-2s_0 + \mu \tilde{\alpha}(\lambda_1)} \quad \mbox{for}~~ \varepsilon^{\frac{1}{2+\mu}} \le |x'| <R_0/4
\end{equation}
and
\begin{equation}
\label{osc_u_rho_outside}
\osc_{\Omega_{2\rho} \setminus \Omega_\rho} u \le C \rho^{1-2s_0 + \mu \tilde{\alpha}(\lambda_1)} \quad \mbox{for}~~ \varepsilon^{\frac{1}{2+\mu}} \le \rho <R_0/4.
\end{equation}
By the maximum principle and \eqref{osc_u_rho_outside}, we have
\begin{equation}
\label{osc_u_inside}
\osc_{\Omega_{2\varepsilon^{\frac{1}{2+\mu}}}} u \le C \varepsilon^{\frac{1-2s_0 + \mu \tilde{\alpha}(\lambda_1)}{2+\mu}}.
\end{equation}
For $\varepsilon^{\frac{1}{2}} \le \rho < \frac{1}{2} \varepsilon^{\frac{1}{2+\mu}},$ we consider $u$ in $\Omega_{4\rho} \setminus \Omega_{\rho/2}$. By the change of variables \eqref{x_to_z}, the same ``flipping argument" as above, and \eqref{osc_u_inside}, we have
\begin{equation}
\label{grad_u_middle}
|\nabla u(x)| \le C|x'|^{-1} \varepsilon^{\frac{1-2s_0 + \mu \tilde{\alpha}(\lambda_1)}{2+ \mu}} \quad \mbox{for}~~ \varepsilon^{\frac{1}{2}} \le |x'| < \varepsilon^{\frac{1}{2+\mu}}.
\end{equation}
Finally, we consider $u \in \Omega_{2\sqrt{\varepsilon}}$, and make changes of variables \eqref{x_to_y_2}. By the same ``flipping argument" and \eqref{osc_u_inside}, we have
\begin{equation}
\label{grad_u_inside}
|\nabla u(x)| \le C\varepsilon^{-\frac{1}{2} + \frac{1-2s_0 + \mu \tilde{\alpha}(\lambda_1)}{2+ \mu}}  \quad \mbox{for}~~  |x'| < \varepsilon^{\frac{1}{2}}.
\end{equation}
Therefore, \eqref{one_bootstrap} is concluded from \eqref{grad_u_rho_outside}, \eqref{grad_u_middle}, and \eqref{grad_u_inside}.

We have improved the upper bound of $|\nabla u(x)| \le C(\varepsilon + |x'|^2)^{-s_0}$ to $|\nabla u(x)| \le C(\varepsilon + |x'|^2)^{-s_1}$, where $s_1 = \frac{1}{2} - \frac{1 - 2s_0 + \mu \tilde\alpha(\lambda_1)}{2 + \mu}$. We can repeat the argument with
$$
\|\nabla \bar{v}\|_{\varepsilon,-2s_1,1,B_{R_0}} + \|F\|_{\varepsilon, \gamma- 2s_1,0, B_{R_0}} < \infty.
$$
Let
$$
s_{i+1} = \frac{1}{2} - \frac{1 - 2s_i + \mu \tilde\alpha(\lambda_1)}{2 + \mu},
$$
which is equivalent to
$$
s_{i+1} - \frac{1}{2} = \frac{2}{2+ \mu} \left( s_i - \frac{1}{2} \right) - \frac{\mu \tilde\alpha(\lambda_1)}{2 + \mu}.
$$
Since $s_0 = \frac{1}{2}$, iterating the equation above gives
$$
s_k = \frac{1}{2}- \frac{\mu \tilde{\alpha}(\lambda_1)}{2 + \mu} \sum_{i=0}^{k-1} \left( \frac{2}{2+\mu} \right)^i \quad \forall k \in \bN.
$$
After repeating this argument $k$ times, we have
\begin{equation}
\label{iteration_k_times}
\omega(\rho) \le C \left(\frac{\rho}{R} \right)^{\alpha(\lambda_1)} \omega(R) + C \left( \frac{R}{\rho} \right)^{\frac{n}{2}} R^{1-2s_k + \mu \tilde\alpha(\lambda_1)} \quad \forall~ \varepsilon^{\frac{1}{2+\mu}} \le \rho < \frac{R}{4} \le \frac{R_0}{4},
\end{equation}
provided that
$$
1-2s_{k-1} + \mu \tilde\alpha(\lambda_1) = \mu \tilde\alpha(\lambda_1) \sum_{i=0}^{k-1}\left(\frac{2}{2+\mu}\right)^i < \alpha(\lambda_1).
$$
Since
$$
\mu \tilde\alpha(\lambda_1) \sum_{i=0}^{\infty}\left(\frac{2}{2+\mu}\right)^i = (2 + \mu) \tilde\alpha(\lambda_1) > \alpha(\lambda_1),
$$
there exists a $k \in \bN$ such that
$$
\mu \tilde\alpha(\lambda_1) \sum_{i=0}^{k-1}\left(\frac{2}{2+\mu}\right)^i < \alpha(\lambda_1) \le \mu \tilde\alpha(\lambda_1) \sum_{i=0}^{k}\left(\frac{2}{2+\mu}\right)^i = 1-2s_k + \mu \tilde\alpha(\lambda_1).
$$
For such $k$, \eqref{iteration_k_times} implies that for any $\alpha < \alpha(\lambda_1)$,
$$
\omega(\rho) \le C(\alpha)\rho^{\alpha} \quad \forall~ \varepsilon^{\frac{1}{2+\mu}} \le \rho < \frac{R_0}{4}.
$$
By the same argument of proving \eqref{one_bootstrap}, we can conclude that
$$
|\nabla u(x)| \le C(\varepsilon + |x'|^2)^{-\frac{1}{2} + \frac{\alpha}{2 + \mu}} \quad \mbox{for}~~x\in \Omega_{R_0/4}.
$$
By taking $\mu$ sufficiently small, this concludes the proof.
\end{proof}

\section{Properties of \texorpdfstring{$\lambda_1$}{lambda1} and its corresponding eigenspace}

In this section, we consider the eigenvalue problem \eqref{SL_problem} with $a(\xi) = \xi^t M \xi$ for some positive definite $(n-1) \times (n-1)$ matrix $M$. We study the properties of $\lambda_1$, the first nonzero eigenvalue of \eqref{SL_problem}, and the properties of its corresponding eigenspace. After a suitable rotation in $\bR^{n-1}$, we may assume without loss of generality that
\begin{equation}
\label{M_condition}
(x')^tMx' = \sum_{j=1}^{n-1} a_j x_j^2, \quad a_1 \ge \ldots \ge a_{n-1} > 0.
\end{equation}
Recall that
$$
\bS^{n-2} = \left\{x' = (x_1, \ldots x_{n-1}) \in \bR^{n-1}~\big|~ \sum_{j=1}^{n-1} x_j^2 = 1  \right\}.
$$
First we prove an estimate on $\lambda_1$ under a more general assumption on $a(\xi)$.

\begin{lemma}\label{lambda_1_upper_bound}
For $n \ge 3$, let $\lambda_1$ be the first nonzero eigenvalue of the eigenvalue problem \eqref{SL_problem} with $a(\xi) > 0$ a.e. satisfying $\ln a \in L^\infty(\bS^{n-2})$ and $\int_{\bS^{n-2}} a x_i = 0$ for all $i = 1,\ldots, n-1$. Then $\lambda_1 \le n-2$, and the equality holds if and only if $a$ is constant.
\end{lemma}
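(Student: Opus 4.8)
The plan is to use the variational characterization \eqref{first_eigenvalue} with the restrictions to $\bS^{n-2}$ of linear functions as trial functions; the hypothesis $\int_{\bS^{n-2}} a\,x_i\,d\sigma = 0$ is exactly what makes them admissible. For the (easy) converse direction: when $a$ is constant, \eqref{SL_problem} reduces to $-\Delta_{\bS^{n-2}}u = \lambda u$, whose first nonzero eigenvalue is the dimension $n-2$, so $\lambda_1 = n-2$. For the inequality, I would use that on $\bS^{n-2}$ one has $\nabla_{\bS^{n-2}}x_i = e_i - x_i\xi$ (the tangential component of $e_i$), so that for $c\in\bR^{n-1}$ the trial function $u_c := c\cdot\xi = \sum_i c_i x_i$ satisfies $\nabla_{\bS^{n-2}}u_c = c - u_c\,\xi$, hence $|\nabla_{\bS^{n-2}}u_c|^2 = |c|^2 - u_c^2$. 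Since $\langle u_c,1\rangle_{\bS^{n-2}} = \sum_i c_i\fint_{\bS^{n-2}} a\,x_i\,d\sigma = 0$, $u_c$ is admissible, and writing $P_{ij} := \fint_{\bS^{n-2}} a\,x_ix_j\,d\sigma$ the Rayleigh quotient of $u_c$ equals $\frac{|c|^2\fint_{\bS^{n-2}} a\,d\sigma}{c^tPc} - 1$. Minimizing over $c\ne 0$ gives $\lambda_1 \le \frac{\fint_{\bS^{n-2}} a\,d\sigma}{\Lambda_{\max}(P)} - 1$; since $P$ is symmetric positive definite with $\tr P = \fint_{\bS^{n-2}} a\sum_i x_i^2\,d\sigma = \fint_{\bS^{n-2}} a\,d\sigma$, one has $\Lambda_{\max}(P)\ge \tr P/(n-1)$, whence $\lambda_1 \le (n-1)-1 = n-2$.

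For the equality case, suppose $\lambda_1 = n-2$. Then both inequalities above are equalities, so $\Lambda_{\max}(P) = \tr P/(n-1)$, which forces all eigenvalues of $P$ to agree and thus $P = \frac{\fint_{\bS^{n-2}}a\,d\sigma}{n-1}\,I$; moreover every $u_c$ then attains the infimum in \eqref{first_eigenvalue}, hence, by the standard Euler--Lagrange argument (using $\langle x_i,1\rangle_{\bS^{n-2}} = 0$ to drop the orthogonality constraint on the test function), each $x_i$, $1\le i\le n-1$, is a weak eigenfunction:
\[
\fint_{\bS^{n-2}} a\,\nabla_{\bS^{n-2}}x_i\cdot\nabla_{\bS^{n-2}}\phi\,d\sigma = (n-2)\fint_{\bS^{n-2}} a\,x_i\,\phi\,d\sigma
\]
for all $\phi\in H^1(\bS^{n-2})$. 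Setting $h := a - \fint_{\bS^{n-2}}a\,d\sigma \in L^\infty$ (so $\fint_{\bS^{n-2}}h\,d\sigma = 0$) and subtracting the constant-coefficient identity $\fint_{\bS^{n-2}} \nabla_{\bS^{n-2}}x_i\cdot\nabla_{\bS^{n-2}}\phi\,d\sigma = (n-2)\fint_{\bS^{n-2}} x_i\phi\,d\sigma$ (i.e. $-\Delta_{\bS^{n-2}}x_i = (n-2)x_i$), I obtain
\[
\fint_{\bS^{n-2}} h\,\nabla_{\bS^{n-2}}x_i\cdot\nabla_{\bS^{n-2}}\phi\,d\sigma = (n-2)\fint_{\bS^{n-2}} h\,x_i\,\phi\,d\sigma \qquad \forall\,\phi\in H^1(\bS^{n-2}).
\]

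I would then test this identity against monomials and induct on the degree. Using $\nabla_{\bS^{n-2}}x_i\cdot\nabla_{\bS^{n-2}}x_j = \delta_{ij} - x_ix_j$, one checks that for a monomial $q$ of degree $m$, $\nabla_{\bS^{n-2}}x_i\cdot\nabla_{\bS^{n-2}}q$ is a sum of monomials of degree $m-1$ minus $m\,x_iq$. Hence, assuming inductively that $\fint_{\bS^{n-2}} h\,\cdot(\text{monomial of degree}\le m)\,d\sigma = 0$, the identity with $\phi = q$ forces $(n-2+m)\fint_{\bS^{n-2}} h\,x_iq\,d\sigma = 0$, i.e. $\fint_{\bS^{n-2}} h\,\cdot(\text{any monomial of degree }m+1)\,d\sigma = 0$. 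The base cases $\fint_{\bS^{n-2}} h\,d\sigma = 0$ and $\fint_{\bS^{n-2}} h\,x_i\,d\sigma = 0$ (the latter being precisely the hypothesis $\int_{\bS^{n-2}} a\,x_i = 0$) start the induction, so $h$ is $L^2(\bS^{n-2})$-orthogonal to every polynomial; since polynomials are dense in $C(\bS^{n-2})$ by Stone--Weierstrass, $h\equiv 0$, i.e. $a$ is constant.

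The main obstacle is the equality case: because $a$ is only assumed to lie in $L^\infty$, there is no regularity to exploit, and the conditions forcing $P$ to be a scalar matrix are far too weak on their own to force $a$ constant. The degree induction on the weak equation for $h$ is what closes this gap, trading a regularity argument for an algebraic one; it is also where — besides the admissibility of the $u_c$ — the hypothesis $\int_{\bS^{n-2}} a\,x_i = 0$ enters, supplying the degree-one base case. Everything else — the formula for $\nabla_{\bS^{n-2}}x_i$, the bound $\Lambda_{\max}(P)\ge\tr P/(n-1)$, and the Euler--Lagrange characterization of minimizers — is routine.
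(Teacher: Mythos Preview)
Your proof is correct. The inequality part is the same idea as in the paper---use the linear functions $x_i$ as trial functions---just packaged differently: you minimize the Rayleigh quotient over all $u_c=c\cdot\xi$ via the matrix $P$ and the bound $\Lambda_{\max}(P)\ge \tr P/(n-1)$, while the paper sums the $n-1$ identities obtained from $-\Delta_{\bS^{n-2}}x_i=(n-2)x_i$ and applies pigeonhole. These are equivalent.

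The genuine divergence is in the equality case. The paper argues that if each $x_i$ is an eigenfunction for the weighted problem, then the weak formulation yields $\bigl|\int_{\bS^{n-2}} a\,e\cdot\nabla_{\bS^{n-2}}\eta\bigr|\le C\|\eta\|_{L^1}$ for every unit vector $e$, which forces $a\in W^{1,\infty}(\bS^{n-2})$; subtracting the unweighted equation then gives $\nabla_{\bS^{n-2}}a\cdot\nabla_{\bS^{n-2}}x_i=0$ a.e.\ for all $i$, hence $\nabla_{\bS^{n-2}}a=0$ a.e. Your route is purely algebraic: from the weak identity for $h=a-\fint a$ and the formula $\nabla_{\bS^{n-2}}x_i\cdot\nabla_{\bS^{n-2}}q=\partial_i Q - m\,x_i q$ (Euler's relation for the degree-$m$ extension $Q$), you induct on the degree to kill $\fint h\cdot(\text{monomial})$ for all monomials, and conclude by density. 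This avoids any regularity bootstrap and is arguably cleaner, at the cost of not recording the intermediate $W^{1,\infty}$ fact. (Incidentally, taking $\phi\equiv 1$ in your weak identity already gives the degree-one base case, so the hypothesis $\int a\,x_i=0$ is really used only for admissibility of the $u_c$; your stating it twice does no harm.)
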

\begin{proof}
Since
\begin{equation}\label{spherical_harmonic}
-\Delta_{\bS^{n-2}} x_i = (n-2) x_i \quad \mbox{on}~~\bS^{n-2}
\end{equation}
for $i = 1, \ldots , n-1$, multiplying the above equation by $ax_i$, and integrating over $\bS^{n-2}$, we have, by the identity $x_i \Delta_{\bS^{n-2}}x_i = - |\nabla_{\bS^{n-2}} x_i|^2 + \frac{1}{2} \Delta_{\bS^{n-2}}(x_i^2)$,
\begin{align*}
(n-2)\int_{\bS^{n-2}}ax_i^2 &= - \int_{\bS^{n-2}}ax_i \Delta_{\bS^{n-2}}x_i\nonumber\\
&= \int_{\bS^{n-2}}a |\nabla_{\bS^{n-2}} x_i|^2 - \frac{1}{2}\int_{\bS^{n-2}}a \Delta_{\bS^{n-2}} (x_i^2).
\end{align*}
Summing over $i = 1,\ldots, n-1$, since $\sum_{i=1}^{n-1} x_i = 1$ on $\bS^{n-2}$, we have
$$
(n-2) \sum_{i=1}^{n-1}\int_{\bS^{n-2}}ax_i^2 = \sum_{i=1}^{n-1} \int_{\bS^{n-2}}a |\nabla_{\bS^{n-2}} x_i|^2.
$$
Thus for at least one $i$,
$$
\int_{\bS^{n-2}}a |\nabla_{\bS^{n-2}} x_i|^2 \le (n-2) \int_{\bS^{n-2}}ax_i^2,
$$
which implies $\lambda_1 \le n-2$. If $\lambda_1 = n-2$, then by the Rayleigh quotient formula,
$$
\int_{\bS^{n-2}}a |\nabla_{\bS^{n-2}} x_i|^2 = (n-2) \int_{\bS^{n-2}}ax_i^2
$$
for all $i = 1,\ldots,n-1$. This implies
\begin{equation}\label{spherical_harmonic_2}
-\dv_{\bS^{n-2}}\Big(a\nabla_{\bS^{n-2}} x_i \Big) =(n-2) a x_i \quad \mbox{for}~~i = 1,\ldots,n-1.
\end{equation}
By an orthogonal transformation, we have
$$
-\dv_{\bS^{n-2}}\Big(a\nabla_{\bS^{n-2}} (e\cdot \xi) \Big) =(n-2) a (e\cdot \xi) \quad \mbox{on}~~\bS^{n-2}
$$
for any unit vector $e \in \bR^{n-1}$. Let $\eta \in C^\infty(\bS^{n-2})$. Multiplying the above equation by $\eta$ and integrating over $\bS^{n-2}$, we have
\begin{align*}
(n-2)\int_{\bS^{n-2}} a (e \cdot \xi) \eta &= \int_{\bS^{n-2}} a \nabla_{\bS^{n-2}} (e \cdot \xi) \cdot \nabla_{\bS^{n-2}} \eta\\
&= \int_{\bS^{n-2}} a (e - (e \cdot \xi)\xi) \cdot \nabla_{\bS^{n-2}} \eta
= \int_{\bS^{n-2}} a e \cdot \nabla_{\bS^{n-2}} \eta.
\end{align*}
This implies
$$
\left| \int_{\bS^{n-2}} a e \cdot \nabla_{\bS^{n-2}} \eta \right| \le C \| \eta \|_{L^1(\bS^{n-2})} \quad \forall \eta \in C^\infty(\bS^{n-2}).
$$
Therefore, $ae \in W^{1,\infty}(\bS^{n-2})$ and hence $a \in W^{1,\infty}(\bS^{n-2})$.
Multiplying \eqref{spherical_harmonic} by $a$ and subtracting \eqref{spherical_harmonic_2}, we have
$$
\nabla_{\bS^{n-2}} a \cdot \nabla_{\bS^{n-2}} x_i = 0 \text{ a.e.}\quad \mbox{for}~~i = 1,\ldots,n-1.
$$
Since the span of $\{\nabla_{\bS^{n-2}} x_1,\ldots, \nabla_{\bS^{n-2}} x_{n-1}\}$ is the tangent space of $\bS^{n-2}$ at $x$, we have $\nabla_{\bS^{n-2}} a=0$ a.e.
Therefore, $a$ is constant.
\end{proof}

In the sequel, we will first discuss the case when $n = 3$ and then the case when $n \ge 4$.

\subsection{The case when \texorpdfstring{$n =3$}{}}

We write $x_1 = \cos \theta$ and $x_2 = \sin \theta$, so that \eqref{M_condition} takes the form
\begin{equation}
\label{M_condition_3d}
(x')^tMx' = \sum_{j=1}^{2} a_j x_j^2 = \frac{a_1+a_2}{2} + \frac{a_1 - a_2}{2} \cos (2 \theta), \quad a_1 \ge a_2 > 0.
\end{equation}

\begin{theorem}
\label{3d_eigenvalue_thm}
For $n = 3$, let $\lambda_1$ be the first nonzero eigenvalue of the eigenvalue problem \eqref{SL_problem} with $a(\xi) = \xi^t M \xi$, where $M$ satisfies \eqref{M_condition_3d}. Then $\lambda_1$ is strictly decreasing with respect to $\frac{a_1}{a_2} \in [1 ,\infty)$, and satisfies
$$
\frac{C_1 a_2^{1/2}}{(a_1+a_2)^{1/2}} \le \lambda_1 \le  \frac{C_2 a_2^{1/2}}{(a_1+a_2)^{1/2}} \quad
\mbox{and} \quad \lambda_1 \le \frac{a_1+3a_2}{3a_1 + a_2}
$$
for some positive constants $C_1,C_2$ independent of $M$. Moreover, when $a_1 > a_2$, the eigenspace corresponding to $\lambda_1$ is one dimensional, the corresponding eigenfunctions have exactly two zeros at $\theta = \pi/2$ and $3\pi/2$, and they are odd with respect to $\theta = \pi/2$ and $3\pi/2$.
\end{theorem}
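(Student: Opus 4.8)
The plan is to reduce to the one–dimensional problem on $\bS^1$ and exploit the two reflection symmetries of the weight. Write $\xi=(\cos\theta,\sin\theta)$ and $a(\theta)=a_1\cos^2\theta+a_2\sin^2\theta$, so that \eqref{SL_problem} reads $-(au')'=\lambda au$ on $\bS^1$ with $\langle u,v\rangle=\frac1{2\pi}\int_0^{2\pi}auv\,d\theta$. Since $a$ is even about $\theta=0$, even about $\theta=\pi/2$, and $\pi$–periodic, the operator commutes with $\theta\mapsto-\theta$ and with $\theta\mapsto\pi-\theta$. Splitting $L^2(\bS^1,a\,d\theta)$ into its even and odd parts about $\theta=0$ identifies the spectrum of \eqref{SL_problem} with the union of the Neumann spectrum and the Dirichlet spectrum of $-(au')'=\mu au$ on $(0,\pi)$. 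Using the symmetry $\theta\mapsto\pi-\theta$, simplicity of the first nontrivial Sturm–Liouville eigenvalue, and the uniqueness theorem for second–order linear ODEs (ruling out a nonzero solution with $u(\pi/2)=u'(\pi/2)=0$), one checks that the first nonzero Neumann eigenfunction on $(0,\pi)$ is odd about $\pi/2$, hence restricts to the ground state $\phi$ of the mixed problem $-(a\phi')'=\mu^{\mathrm{ND}}a\phi$ on $(0,\pi/2)$ with $\phi'(0)=\phi(\pi/2)=0$, while the first Dirichlet eigenfunction is even about $\pi/2$ and restricts to the ground state of the mixed problem on $(0,\pi/2)$ with Dirichlet at $0$ and Neumann at $\pi/2$, of eigenvalue $\mu^{\mathrm{DN}}$. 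Thus $\lambda_1=\min\{\mu^{\mathrm{ND}},\mu^{\mathrm{DN}}\}$, the bottom $0$ of the Neumann part having constant eigenfunction.

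To decide which of the two equals $\lambda_1$, test the Rayleigh quotient \eqref{first_eigenvalue} with $u=x_1=\cos\theta$, which is orthogonal to constants by parity; this gives at once
$$\lambda_1\le\frac{\int_0^{2\pi}a\sin^2\theta\,d\theta}{\int_0^{2\pi}a\cos^2\theta\,d\theta}=\frac{a_1+3a_2}{3a_1+a_2},$$
the stated bound, which is $<1$ when $a_1>a_2$ (consistent with Lemma \ref{lambda_1_upper_bound}). For the other one, write $a=a_2+(a_1-a_2)\cos^2\theta$ on $(0,\pi/2)$ with $a_1-a_2\ge0$, so that for every admissible $u$ the $\mathrm{(DN)}$ Rayleigh quotient is a mediant of $\int_0^{\pi/2}|u'|^2/\int_0^{\pi/2}|u|^2$ and $\int_0^{\pi/2}\cos^2\theta|u'|^2/\int_0^{\pi/2}\cos^2\theta|u|^2$; the first is $\ge1$ (the unweighted Dirichlet–Neumann eigenvalue on $(0,\pi/2)$, attained at $\sin\theta$), and so is the second (its weight vanishes only at the Neumann endpoint $\pi/2$, and $\sin\theta$ solves the corresponding equation with eigenvalue $3$ and is positive, hence is the ground state). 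So $\mu^{\mathrm{DN}}\ge1>\lambda_1$ when $a_1>a_2$, forcing $\lambda_1=\mu^{\mathrm{ND}}$. Being the first eigenvalue of a regular Sturm–Liouville problem it is simple; pulling back to $\bS^1$, the eigenspace is one–dimensional, spanned by a function even about $0$, odd about $\pi/2$ and $3\pi/2$, with no zero on $(-\pi/2,\pi/2)$ — hence exactly two zeros, at $\pi/2$ and $3\pi/2$.

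For the monotonicity, normalize $a_2=1$ and set $a_t(\theta)=t\cos^2\theta+\sin^2\theta$, $t=a_1/a_2$; then $\lambda_1(t)=\mu^{\mathrm{ND}}(t)$ is simple, hence analytic in $t$, and the Feynman–Hellmann formula makes $\partial_t\lambda_1$ proportional, by a positive factor, to $N:=\int_0^{\pi/2}\cos^2\theta\,(|\phi_t'|^2-\lambda_1\phi_t^2)\,d\theta$. Since $\int_0^{\pi/2}a_t(|\phi_t'|^2-\lambda_1\phi_t^2)=0$ and $a_t=t\cos^2\theta+\sin^2\theta$, one gets $(1-t)N=\int_0^{\pi/2}|\phi_t'|^2-\lambda_1\int_0^{\pi/2}\phi_t^2$; because $\phi_t(\pi/2)=0$, the Poincaré inequality $\int_0^{\pi/2}|v'|^2\ge\int_0^{\pi/2}v^2$ (valid for $v(\pi/2)=0$) together with $\lambda_1<1$ for $t>1$ gives $\int_0^{\pi/2}|\phi_t'|^2>\lambda_1\int_0^{\pi/2}\phi_t^2$, whence $N<0$ for all $t>1$ and $\lambda_1$ is strictly decreasing on $[1,\infty)$.

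Finally the two–sided bound. If $\langle u,1\rangle=0$ then $u$ vanishes somewhere, so for every $\theta$, by Cauchy–Schwarz, $|u(\theta)|^2\le\big(\int_{\bS^1}a^{-1}\big)\big(\int_{\bS^1}a|u'|^2\big)$; integrating against $a$ gives $\lambda_1\ge\big[(\int_{\bS^1}a)(\int_{\bS^1}a^{-1})\big]^{-1}$, and with $\int_0^{2\pi}a=\pi(a_1+a_2)$, $\int_0^{2\pi}a^{-1}=2\pi/\sqrt{a_1a_2}$ this yields $\lambda_1\ge c\sqrt{a_1a_2}/(a_1+a_2)\ge c'a_2^{1/2}/(a_1+a_2)^{1/2}$, using $a_1\ge a_2$. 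For the upper bound, take the equilibrium potential $u$ with $u'=1/a$ on $(0,\pi)$, centered so that $u(\pi/2)=0$ and extended $\pi$–antiperiodically: $u$ is odd about $\pi/2$, so $\langle u,1\rangle=0$; one computes $\int_{\bS^1}a|u'|^2=2\pi/\sqrt{a_1a_2}$, while $|u|\ge c_0/\sqrt{a_1a_2}$ on $(0,\pi/4)$ and its reflections, an arc carrying a fixed fraction of $\int_{\bS^1}a$, so $\int_{\bS^1}a|u|^2\ge c(a_1+a_2)/(a_1a_2)$ and $\lambda_1\le C\sqrt{a_1a_2}/(a_1+a_2)\le Ca_2^{1/2}/(a_1+a_2)^{1/2}$. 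The crux of the whole argument is the step that places the ground mode in the $\cos\theta$ symmetry class — equivalently the strict inequality $\mu^{\mathrm{DN}}>\mu^{\mathrm{ND}}$ — since this yields simplicity of $\lambda_1$, the exact locations of its zeros, and the analyticity of $t\mapsto\lambda_1(t)$ used above; it in turn rests on the lower bound $\mu^{\mathrm{DN}}\ge1$, proved via the mediant decomposition and control of the auxiliary $\cos^2\theta$–weighted eigenvalue.
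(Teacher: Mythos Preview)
Your argument is correct and reaches all the conclusions of the theorem, but it is organized quite differently from the paper's proof. The paper reduces the periodic problem to the one–parameter family of Dirichlet problems
\[
\big[(1+\tilde\beta\cos 2\theta)u'\big]'=-\mu(1+\tilde\beta\cos 2\theta)u\ \text{on }(0,\pi),\quad u(0)=u(\pi)=0,
\]
observing that the two candidate modes (zeros at $0,\pi$ versus at $\pi/2,3\pi/2$) correspond to $\tilde\beta=\beta$ and $\tilde\beta=-\beta$. Their key lemma is that $\mu_1(\tilde\beta)$ is strictly increasing on $(-1,1]$; this single monotonicity statement simultaneously identifies $\lambda_1=\mu_1(-\beta)$, gives the strict decrease in $a_1/a_2$, and (via $\mu_1(1)=3$) furnishes the qualitative structure. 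Your route instead halves the interval once more to the mixed ND/DN problems on $(0,\pi/2)$, separates the identification step ($\mu^{\mathrm{DN}}\ge 1>\lambda_1$ via the mediant decomposition) from the monotonicity step (Feynman--Hellmann plus the unweighted Poincar\'e inequality on $(0,\pi/2)$). The paper's approach is more economical because one comparison does double duty; yours is more modular and makes the role of the Poincar\'e constant $1$ explicit in both places. For the two–sided bound, the paper uses an explicit piecewise–linear trial function of width $\sqrt{\varepsilon}$ and a direct Cauchy--Schwarz estimate, whereas your equilibrium potential $u'=1/a$ is the natural extremizer and gives the same order with less bookkeeping. One point worth tightening in your write-up: the claim that $\sin\theta$ is the ground state of the degenerate $\cos^2\theta$--weighted DN problem (hence $C/D\ge 1$) is best justified by the ground state substitution $u=\sin\theta\cdot\psi$, which shows $\int\cos^2\theta|u'|^2\ge 3\int\cos^2\theta\,u^2$ for all smooth $u$ with $u(0)=0$, the boundary term $[\cos^3\theta\sin\theta\,\psi^2]_0^{\pi/2}$ vanishing automatically; this avoids invoking Sturm--Liouville theory at a singular endpoint.
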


We prove Theorem \ref{3d_eigenvalue_thm} through the following two lemmas. Denote $\beta = \frac{a_1-a_2}{a_1 + a_2}$. Then $\beta \in [0,1)$ and the eigenvalue problem \eqref{SL_problem} becomes
\begin{equation}
\label{eigenvalue_problem_3d}
[(1 + \beta \cos (2\theta)) u'(\theta)]' = - \lambda (1 + \beta \cos (2\theta)) u(\theta) \quad \mbox{on}~~(0, 2\pi),
\end{equation}
with periodic boundary condition. 
When $\beta = 0$, it is easy to see that $\lambda_1 = 1$.
\begin{lemma}\label{lem5.2}
For $\beta \in (0,1)$, consider the eigenvalue problem \eqref{eigenvalue_problem_3d}. If the first nonzero eigenvalue $\lambda_1(\beta)$ is simple, then the eigenfunctions corresponding to  $\lambda_1(\beta)$ must have zeros at $\theta=0, \pi$ or $\theta = \pi/2, 3\pi/2$.
\end{lemma}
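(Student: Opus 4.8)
The plan is to exploit the symmetry group of the weight $a(\theta) = 1 + \beta\cos(2\theta)$ together with the classical oscillation theory for periodic Sturm--Liouville problems. First I would observe that $a$ is invariant under the two reflections of the circle $\bR/2\pi\bZ$ given by $R_0\colon \theta \mapsto -\theta$ and $R_{\pi/2}\colon \theta \mapsto \pi - \theta$; together these generate the group $\{\mathrm{id}, R_0, R_{\pi/2}, R_0R_{\pi/2}\}$, in which $R_0R_{\pi/2}$ is the rotation by $\pi$. Since the operator in \eqref{eigenvalue_problem_3d} commutes with the pullback by each reflection (this uses only that $a$ is even under them), and since $\lambda_1(\beta)$ is assumed simple, the corresponding one-dimensional eigenspace is invariant under these pullbacks; hence an eigenfunction $\phi$ satisfies $\phi\circ R_0 = \epsilon_0\phi$ and $\phi\circ R_{\pi/2} = \epsilon_1\phi$ for some signs $\epsilon_0,\epsilon_1 \in \{\pm 1\}$.

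Next I would record two facts. First, the zeros of $\phi$ in $[0,2\pi)$ are all simple (a zero of order $\ge 2$ would force $\phi\equiv 0$ by uniqueness for the second order linear ODE), and, by the oscillation theorem for periodic Sturm--Liouville problems --- the eigenvalues being $0 = \lambda_0 < \lambda_1 \le \lambda_2 < \lambda_3 \le \lambda_4 < \cdots$, with the eigenfunctions of $\lambda_{2k-1}$ and $\lambda_{2k}$ having exactly $2k$ zeros in $[0,2\pi)$ --- the simplicity of $\lambda_1(\beta)$ forces $\phi$ to have \emph{exactly two} zeros in $[0,2\pi)$. Second, if $\epsilon_0 = +1$ then $\phi$ cannot vanish at either fixed point $0$ or $\pi$ of $R_0$: evenness about such a point together with a zero there would make $\phi$ and $\phi'$ vanish simultaneously, hence $\phi\equiv 0$; likewise, if $\epsilon_1 = +1$ then $\phi$ does not vanish at $\pi/2$ or $3\pi/2$.

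I would then argue by cases on $(\epsilon_0,\epsilon_1)$. If $\epsilon_0 = -1$, then $\phi$ vanishes at the fixed points $0$ and $\pi$ of $R_0$, which is the desired conclusion; if $\epsilon_1 = -1$, then $\phi$ vanishes at $\pi/2$ and $3\pi/2$, again as desired. It remains to rule out $\epsilon_0 = \epsilon_1 = +1$. In that case the second fact shows that none of $0,\pi/2,\pi,3\pi/2$ is a zero of $\phi$; but $\phi$ has at least one zero $\theta^{*}$ (it is $L^2(a\,d\theta)$-orthogonal to the constants), and then $-\theta^{*}$, $\pi - \theta^{*}$ and $\theta^{*} + \pi$ are also zeros of $\phi$. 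A short check shows these four points are pairwise distinct whenever $\theta^{*} \notin \{0,\pi/2,\pi,3\pi/2\}$, so $\phi$ would have at least four zeros, contradicting the first fact. Hence $\epsilon_0 = \epsilon_1 = +1$ is impossible, and the lemma follows.

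The only substantive input is the exact-zero-count statement: that simplicity of $\lambda_1(\beta)$ pins down its eigenfunction to have precisely two zeros on the circle. I expect this to be the step requiring the most care, and I would invoke the standard oscillation / rotation-number theory for Hill-type equations (e.g. Magnus--Winkler, \emph{Hill's Equation}, or Eastham, \emph{The Spectral Theory of Periodic Differential Equations}) rather than reprove it; everything else is elementary symmetry bookkeeping.
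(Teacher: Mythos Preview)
Your proof is correct and takes a genuinely different route from the paper's. Both arguments rest on the same oscillation-theoretic input (an eigenfunction for $\lambda_1$, when simple, has exactly two zeros on the circle), but the paper proceeds \emph{constructively}: it solves the Dirichlet problem on $(0,\pi)$ and on $(\pi/2,3\pi/2)$, takes the odd extension of each first eigenfunction to obtain periodic solutions $u_1,u_2$ with exactly two zeros (at $\{0,\pi\}$ and $\{\pi/2,3\pi/2\}$ respectively), and concludes that these must be the eigenfunctions for $\lambda_1$ and $\lambda_2$. Your argument instead exploits that the dihedral symmetry group of the weight acts on the one-dimensional eigenspace by signs, and then rules out the fully even case by a zero-count.

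Your approach is slicker for the lemma as stated. The paper's approach, however, buys something you do not get: it identifies $\lambda_1(\beta)$ and $\lambda_2(\beta)$ with the first Dirichlet eigenvalues $\mu_1(-\beta)$ and $\mu_1(\beta)$ of the half-interval problem \eqref{eigenvalue_problem_3d_2}. This reduction is exactly what is used in the next lemma to prove monotonicity of $\lambda_1$ in $\beta$ and the quantitative bounds in Theorem~\ref{3d_eigenvalue_thm}. So if you continue with your symmetry proof, you would still need to supply that identification separately before proceeding to Lemma~\ref{lem5.3}.
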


\begin{proof}
By \cite{CodLev}*{Theorem 3.1 in Chapter 8}, the problem \eqref{eigenvalue_problem_3d} has eigenvalues $0 = \lambda_0 < \lambda_1 \le \lambda_2 < \lambda_3 \le \lambda_4 < \ldots$. Namely, $\lambda_{2i} < \lambda_{2i+1} \le \lambda_{2i+2}$ for $i = 0,1,2, \ldots$. Moreover, an eigenfunction corresponding to $\lambda_{2i+1}$ or $\lambda_{2i+2}$ must have  exactly $2i+2$ zeros on $[0, 2\pi)$. To conclude the lemma, we only need to construct two solutions $u_1$ and $u_2$ of \eqref{eigenvalue_problem_3d}, whose zeros are at $\theta=0, \pi$ and $\theta = \pi/2, 3\pi/2$, respectively.

First we consider the Dirichlet problem on $(0, \pi)$:
$$
\left\{
\begin{aligned}
\big[(1 + \beta \cos (2\theta)) u'(\theta)\big]' &= - \mu (1 + \beta \cos (2\theta)) u(\theta) \quad \mbox{in}\,\,(0, \pi),\\
u(0) &= u(\pi) = 0.
\end{aligned}
\right.
$$
From the standard Sturm-Liouville theory, the first eigenvalue $\mu_1 > 0$ is simple and there exists an eigenfunction $u_1 > 0$ in $(0,\pi)$. Taking the odd extension of $u_1$, since $\cos (2\theta)$ is even, we know that $u_1$ satisfies \eqref{eigenvalue_problem_3d} on $\bS_1$ with $\lambda = \mu_1$, and $u_1$ only has zeros at $\theta=0, \pi$ on $[0,2\pi)$.

Then we consider the following Dirichlet problem on $(\pi/2, 3\pi/2)$:
$$
\left\{
\begin{aligned}
\big[(1 + \beta \cos (2\theta)) u'(\theta)\big]' &= - \mu (1 + \beta \cos (2\theta)) u(\theta) \quad \mbox{in}\,\,(\pi/2, 3\pi/2),\\
u(\pi/2) &= u(3\pi/2) = 0.
\end{aligned}
\right.
$$
Let $v(\theta) = u(\theta + \pi/2)$. Then $v$ satisfies
$$
\left\{
\begin{aligned}
\big[(1 - \beta \cos (2\theta)) v'(\theta)\big]' &= - \mu (1 - \beta \cos (2\theta)) v(\theta) \quad \mbox{in}\,\,(0, \pi),\\
v(0) &= v(\pi) = 0.
\end{aligned}
\right.
$$
By the same argument as above, we know that there exist $u_2$ and the first eigenvalue $\mu_2 > 0$, such that $u_2$ satisfies \eqref{eigenvalue_problem_3d} with $\lambda = \mu_2$, and $u_2$ only has zeros at $\theta = \pi/2, 3\pi/2$ on $[0,2\pi)$. Therefore, $\{\mu_1, \mu_2\} = \{ \lambda_1, \lambda_2 \}$ and $u_1, u_2$ are eigenfunctions corresponding to $\mu_1, \mu_2$ respectively.
\end{proof}

As a consequence of this lemma, the problem \eqref{eigenvalue_problem_3d} can be reduced to the following Dirichlet problem in case of studying the first nonzero eigenvalue:
\begin{equation}
\label{eigenvalue_problem_3d_2}
\left\{
\begin{aligned}
\big[(1 + \tilde\beta \cos (2\theta)) u'(\theta)\big]' &= - \mu (1 + \tilde\beta \cos (2\theta)) u(\theta) \quad \mbox{in}\,\,(0, \pi),\\
u(0) &= u(\pi) = 0,
\end{aligned}
\right.
\end{equation}
with $\tilde\beta \in (-1,1]$. Denote $\mu_1(\tilde\beta)$ to be the first  eigenvalue of \eqref{eigenvalue_problem_3d_2}, which is given by the follow Rayleigh quotient
\begin{equation}
\label{Rayleigh_quotient_3d}
\mu_1 (\tilde\beta) = \inf_{u>0 \in H_0^1((0,\pi))}\frac{\int_0^{\pi} (1 + {\tilde\beta} \cos (2\theta)) u'(\theta)^2 \, d\theta}{\int_0^{\pi}(1 + {\tilde\beta} \cos (2\theta)) u(\theta)^2 \, d\theta}.
\end{equation}

\begin{lemma}\label{lem5.3}
Consider the eigenvalue problem \eqref{eigenvalue_problem_3d_2} and let $\mu_1(\tilde{\beta})$ be as above. The function $\mu_1(\tilde\beta)$ is strictly increasing with respect to $\tilde\beta \in (-1,1]$, $\mu_1(1)=3$, and $\lim_{\tilde\beta \to -1} \mu_1(\tilde\beta)=0$. Moreover, we have
\begin{equation}
\label{mu_1_beta}
C_1(1+\tilde\beta)^{1/2}\le \mu_1(\tilde\beta)\le  C_2(1+\tilde\beta)^{1/2} \quad \mbox{and} \quad  \mu_1(\tilde\beta)\le \frac{2+{\tilde\beta}}{2-{\tilde\beta}}
\end{equation}
for some constants $C_1,C_2>0$ independent of $\tilde\beta$.
\end{lemma}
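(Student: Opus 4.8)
The plan is to dispose of the explicit estimates first and then concentrate on strict monotonicity, which is the only delicate point. Throughout write $\eta=1+\tilde\beta$ and $w(\theta)=1+\tilde\beta\cos2\theta=\eta-2\tilde\beta\sin^2\theta$; note that $w$ is symmetric under $\theta\mapsto\pi-\theta$ and, as $\tilde\beta\to-1$, degenerates to order $\eta$ in $\sqrt\eta$-neighborhoods of $\theta=0,\pi$. Testing the Rayleigh quotient \eqref{Rayleigh_quotient_3d} with $u(\theta)=\sin\theta$ and evaluating the two elementary integrals $\int_0^\pi w\cos^2\theta\,d\theta=\tfrac\pi4(2+\tilde\beta)$ and $\int_0^\pi w\sin^2\theta\,d\theta=\tfrac\pi4(2-\tilde\beta)$ gives $\mu_1(\tilde\beta)\le\frac{2+\tilde\beta}{2-\tilde\beta}$; since $\sin\theta$ also solves \eqref{eigenvalue_problem_3d_2} with $\mu=3$ when $\tilde\beta=1$ and is positive on $(0,\pi)$, it is the ground state there, so $\mu_1(1)=3$. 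Because $\frac{2+\tilde\beta}{2-\tilde\beta}<3$ for $\tilde\beta<1$, this upper bound will also serve to extend monotonicity to the closed endpoint.

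For the two-sided $\eta^{1/2}$ bound I would split off $\tilde\beta\ge0$, where $\eta\in[1,2]$ and, once monotonicity is established, $1=\mu_1(0)\le\mu_1(\tilde\beta)\le3$, so both inequalities are trivial. For $\tilde\beta\in(-1,0)$ one has $w=\eta+2(1-\eta)\sin^2\theta$, which is comparable to $\eta+\theta^2$ near $\theta=0$ and to $\eta+(\pi-\theta)^2$ near $\theta=\pi$. The upper bound $\mu_1\le C_2\eta^{1/2}$ follows from the test function rising linearly from $0$ to $1$ on $[0,\sqrt\eta]$, identically $1$ on $[\sqrt\eta,\pi-\sqrt\eta]$, and falling on $[\pi-\sqrt\eta,\pi]$: its weighted Dirichlet energy is at most $C\eta^{1/2}$ while its weighted $L^2$ mass is bounded below. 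The lower bound $\mu_1\ge C_1\eta^{1/2}$ follows from the weighted Hardy inequality
\begin{equation*}
\int_0^L(\eta+\theta^2)|v'|^2\,d\theta\ge\frac{2\sqrt\eta}{\pi L}\int_0^L|v|^2\,d\theta\qquad\text{for }v(0)=0,
\end{equation*}
which one obtains from Cauchy--Schwarz, $|v(\theta)|^2\le\big(\int_0^L\frac{dt}{\eta+t^2}\big)\int_0^L(\eta+t^2)|v'|^2\,dt\le\frac{\pi}{2\sqrt\eta}\int_0^L(\eta+t^2)|v'|^2\,dt$, followed by integration in $\theta$; applying it on $[0,\pi/2]$ and $[\pi/2,\pi]$ and using $w\le2$ yields $\mu_1\ge c\,\eta^{1/2}$. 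The limit $\lim_{\tilde\beta\to-1}\mu_1(\tilde\beta)=0$ is then contained in $\mu_1\le C_2\eta^{1/2}$.

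For monotonicity, observe that on $(-1,1)$ the weight $w$ is smooth and bounded away from $0$, so \eqref{eigenvalue_problem_3d_2} is a regular Sturm--Liouville problem, $\mu_1$ is simple and depends smoothly on $\tilde\beta$, and the ground state $u_1$ (chosen positive and normalized by $\int_0^\pi wu_1^2\,d\theta=1$) is even about $\theta=\pi/2$ by the symmetry of $w$. By the Feynman--Hellmann formula $\frac{d\mu_1}{d\tilde\beta}=\int_0^\pi\cos2\theta\,(u_1'^2-\mu_1u_1^2)\,d\theta$; substituting $\cos2\theta=(w-1)/\tilde\beta$ and using the weak form $\int wu_1'^2=\mu_1\int wu_1^2$ collapses this to $\tfrac1{\tilde\beta}(\mu_1\int u_1^2-\int u_1'^2)$. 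Next, replacing $u_1''$ via the equation $u_1''+\tfrac{w'}{w}u_1'+\mu_1u_1=0$ and integrating by parts gives $\mu_1\int u_1^2-\int u_1'^2=\tfrac12\int u_1^2\,(w'/w)'\,d\theta$, and the short computation $w''w-(w')^2=-4\tilde\beta(\cos2\theta+\tilde\beta)$ (from $w'=-2\tilde\beta\sin2\theta$, $w''=-4\tilde\beta\cos2\theta$) removes the factor $\tilde\beta$, leaving $\frac{d\mu_1}{d\tilde\beta}=-2\int_0^\pi u_1^2\,\frac{\cos2\theta+\tilde\beta}{w^2}\,d\theta$. Finally, since $\frac{\cos2\theta+\tilde\beta}{w^2}=\tfrac12\big(\frac{\sin2\theta}{w}\big)'$, one more integration by parts yields $\int_0^\pi u_1^2\,\frac{\cos2\theta+\tilde\beta}{w^2}\,d\theta=-\int_0^\pi\frac{u_1u_1'\sin2\theta}{w}\,d\theta=-2\int_0^{\pi/2}\frac{u_1u_1'\sin2\theta}{w}\,d\theta<0$, the sign coming from the fact that on $(0,\pi/2)$ every factor is positive: $u_1>0$ and $\sin2\theta,w>0$ are clear, while $u_1'>0$ follows from the maximum-principle observation that the ground state has no interior local minimum, so $u_1$ increases on $(0,\pi/2)$ up to its maximum at $\pi/2$. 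Hence $\frac{d\mu_1}{d\tilde\beta}>0$ on $(-1,1)$, and since $\mu_1(\tilde\beta)\le\frac{2+\tilde\beta}{2-\tilde\beta}<3=\mu_1(1)$ for $\tilde\beta<1$, $\mu_1$ is strictly increasing on all of $(-1,1]$.

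The main obstacle is precisely obtaining the sign of $\frac{d\mu_1}{d\tilde\beta}$: it is invisible in the Rayleigh quotient, where $\tilde\beta$ appears in both numerator and denominator, and the raw derivative formula is sign-indefinite. The argument works only because of two algebraic accidents special to $w=1+\tilde\beta\cos2\theta$ — that $w''w-(w')^2$ is a constant multiple of $\cos2\theta+\tilde\beta$, and that $(\cos2\theta+\tilde\beta)/w^2$ is itself a total derivative — which together reduce the computation to the manifestly positive integral $\int_0^{\pi/2}u_1u_1'\sin2\theta/w\,d\theta$; finding these simplifications requires the explicit structure of $w$. A secondary technical point is that all of the integrations by parts require $w$ bounded away from $0$, which fails at $\tilde\beta=1$ (there $w(\pi/2)=0$); this forces one to treat the endpoint separately through the direct identification $\mu_1(1)=3$ and to extend monotonicity to $\tilde\beta=1$ via the strict inequality $\mu_1(\tilde\beta)<\frac{2+\tilde\beta}{2-\tilde\beta}<3$.
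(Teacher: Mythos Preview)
Your proof is correct. The explicit estimates, the test function $\sin\theta$ for the bound $(2+\tilde\beta)/(2-\tilde\beta)$, the identification $\mu_1(1)=3$, and the piecewise-linear test function for the upper bound $C_2\eta^{1/2}$ all match the paper; your Hardy-type lower bound is a mild repackaging of the paper's argument, which applies Cauchy--Schwarz to $1=u_{\tilde\beta}(\theta_0)^2\le\big(\int_0^{\theta_0}|u_{\tilde\beta}'|\big)^2$ at the maximum point and then bounds $\int_0^{\theta_0}w^{-1}\le C\eta^{-1/2}$.

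The genuine difference is the monotonicity argument. The paper never differentiates $\mu_1$: it sets
\[
A_{\tilde\beta}=\int_0^\pi|u_{\tilde\beta}'|^2-\mu_1(\tilde\beta)\int_0^\pi|u_{\tilde\beta}|^2,\qquad
B_{\tilde\beta}=\int_0^\pi\cos2\theta\,|u_{\tilde\beta}'|^2-\mu_1(\tilde\beta)\int_0^\pi\cos2\theta\,|u_{\tilde\beta}|^2,
\]
observes $A_{\tilde\beta}+\tilde\beta B_{\tilde\beta}=0$ from the equation, and obtains $B_{\tilde\beta}>0$ by testing $u_{\tilde\beta}$ in the Rayleigh quotient at $\tilde\beta=1$ (using $\mu_1(\tilde\beta)<3$). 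For any $\tilde\beta_1<\tilde\beta$ this gives $A_{\tilde\beta}+\tilde\beta_1 B_{\tilde\beta}<0$, which is precisely the statement that $u_{\tilde\beta}$ is an admissible test function with Rayleigh quotient below $\mu_1(\tilde\beta)$ at parameter $\tilde\beta_1$. Your route instead computes $\mu_1'$ via Feynman--Hellmann and the two algebraic identities $w''w-(w')^2=-4\tilde\beta(\cos2\theta+\tilde\beta)$ and $(\cos2\theta+\tilde\beta)/w^2=\tfrac12(\sin2\theta/w)'$, reducing the sign to $u_1'>0$ on $(0,\pi/2)$. The paper's argument is shorter and more robust---it uses only $1+\cos2\theta\ge0$ and the endpoint value $\mu_1(1)=3$, never the detailed structure of $w$---while yours yields an explicit formula for $\mu_1'$ at the cost of the coincidences you point out (and a removable division by $\tilde\beta$ at $\tilde\beta=0$, harmless since the final expression extends continuously).
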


\begin{proof}
First, suppose that $u_{\tilde\beta}$ is an eigenfunction corresponding to $\mu_1(\tilde\beta)$, which is positive on $(0,\pi)$. Since $\cos(2\theta)=\cos(2(\pi-\theta))$, it is easily seen that $u_\alpha(\pi-\cdot)$ is also an eigenfunction. Therefore, $u_{\tilde\beta}(\pi-\cdot)$ is a multiple of $u_{\tilde\beta}$. Because $\max u_{\tilde\beta}(\pi-\cdot)=\max u_{\tilde\beta}$ and both are nonnegative, we get $u_{\tilde\beta}(\pi-\cdot)=u_{\tilde\beta}$. This implies that $u_{\tilde\beta}$ can be written as an expansion of $\sin(k\theta),k=1,3,5,\ldots$ on $[0,\pi]$.

We define
\begin{align*}
A_{\tilde\beta}:&= \int_0^\pi |u'_{\tilde\beta}|^2 - \mu_1(\tilde\beta) \int_0^\pi |u_{\tilde\beta}|^2,\\
B_{\tilde\beta}:&= \int_0^\pi \cos(2\theta) |u'_{\tilde\beta}|^2 - \mu_1(\tilde\beta) \int_0^\pi \cos(2\theta) |u_{\tilde\beta}|^2.
\end{align*}
Because $u_{\tilde\beta}$ is a solution of \eqref{eigenvalue_problem_3d_2} with $\mu = \mu_1(\tilde\beta)$, we have $A_{\tilde\beta} = -{\tilde\beta} B_{\tilde\beta}$.
For ${\tilde\beta}\in (-1,1]$, by taking $u=\sin(\theta)$ in the Rayleigh quotient \eqref{Rayleigh_quotient_3d}, we see that
\begin{equation*}
\mu_1({\tilde\beta})\le (2+{\tilde\beta})/(2-{\tilde\beta}).
\end{equation*}
This concludes the second inequality in \eqref{mu_1_beta}. When ${\tilde\beta} = 1$, $u=\sin(\theta)$ is a solution to \eqref{eigenvalue_problem_3d_2} with $\mu=3$. Since $\sin(\theta)$ is strictly positive on $(0,\pi)$, we infer that $\mu_1(1)=3$.
Thus $\mu_1({\tilde\beta})< \mu_1(1) = 3$, and
$$
\int_0^{\pi} (1 + \cos (2\theta)) |u'_{\tilde\beta}|^2 \ge 3 \int_0^{\pi}(1 + \cos (2\theta)) |u_{\tilde\beta}|^2 > \mu_1({\tilde\beta}) \int_0^{\pi}(1 + \cos (2\theta)) |u_{\tilde\beta}|^2.
$$
Namely, $A_{\tilde\beta} + B_{\tilde\beta} > 0$. Therefore, $(1 - {\tilde\beta}) B_{\tilde\beta} > 0$, which implies $B_{\tilde\beta} > 0$ for any ${\tilde\beta} \in (-1,1)$. For any $-1 < {\tilde\beta}_1 < {\tilde\beta} < 1$, we have $A_{\tilde\beta} = - {\tilde\beta} B_{\tilde\beta} < -{\tilde\beta}_1 B_{\tilde\beta}$. Namely,
$$
\int_0^{\pi} (1 + {\tilde\beta}_1 \cos (2\theta)) |u'_{\tilde\beta}|^2 < \mu_1({\tilde\beta}) \int_0^{\pi} (1 + {\tilde\beta}_1 \cos (2\theta)) |u_{\tilde\beta}|^2.
$$
Therefore, $\mu_1({\tilde\beta}_1) < \mu_1({\tilde\beta})$.
Next, we show the first inequality in \eqref{mu_1_beta}. We may certainly assume that ${\tilde\beta}\in (-1,-1/2)$. Let $\varepsilon=1+{\tilde\beta}\in (0,1/2)$. Define $u(\theta)=\varepsilon^{-1/2}\theta$ when $\theta\in [0,\varepsilon^{1/2}]$, $u(\theta)=1$ when $\theta\in (\varepsilon^{1/2},\pi-\varepsilon^{1/2})$, and $u(\theta)=\varepsilon^{-1/2}(\pi-\theta)$ when $\theta\in [\pi-\varepsilon^{1/2},\pi]$. Then
\begin{align*}
&\int_0^\pi (1+{\tilde\beta}\cos(2\theta))|u'|^2\,d\theta
=2\int_0^{\sqrt\varepsilon}(1+{\tilde\beta}\cos(2\theta))\varepsilon^{-1}\,d\theta\notag\\
&=2\int_0^{\sqrt\varepsilon}(\varepsilon-2{\tilde\beta}\sin^2\theta)
\varepsilon^{-1}\,d\theta \le C\int_0^{\sqrt\varepsilon}(1-{\tilde\beta}\theta^2 \varepsilon^{-1})\,d\theta\le C\sqrt\varepsilon.
\end{align*}
This together with the obvious inequality
$$
\int_0^\pi (1+{\tilde\beta}\cos(2\theta))|u|^2\,d\theta\ge C
$$
and \eqref{Rayleigh_quotient_3d} imply the upper bound of first inequality in \eqref{mu_1_beta}. To see the lower bound, without loss of generality, we assume that
$$
u_{\tilde\beta}(\theta_0)=\max_{\theta\in [0,\pi]} u_{\tilde\beta}(\theta)=1.
$$
By symmetry, we may also assume that $\theta_0\le \pi/2$.
Then by H\"older's inequality,
\begin{align}
1&=u^2_{\tilde\beta}(\theta_0)
\le \Big(\int_0^{\theta_0}|u_{\tilde\beta}'|\,d\theta\Big)^2\notag\\
                    \label{eq7.12}
&\le \Big(\int_0^{\theta_0}(1+{\tilde\beta}\cos(2\theta))|u_{\tilde\beta}'|^2\,d\theta\Big)
\Big(\int_0^{\theta_0}(1+{\tilde\beta}\cos(2\theta))^{-1}\,d\theta\Big).
\end{align}
Note that
\begin{align*}
&\int_0^{\theta_0}(1+{\tilde\beta}\cos(2\theta))^{-1}\,d\theta
=\int_0^{\theta_0}(\varepsilon-2{\tilde\beta}\sin^2\theta)^{-1}\,d\theta\\
&\le C\int_0^{\sqrt\varepsilon}\varepsilon^{-1}\,d\theta
+C\int_{\sqrt\varepsilon}^{\theta_0}\theta^{-2}\,d\theta\le C\varepsilon^{-1/2}.
\end{align*}
Thus from \eqref{eq7.12}, we get
$$
\int_0^{\theta_0}(1+{\tilde\beta}\cos(2\theta))|u_{\tilde\beta}'|^2\,d\theta\ge C\varepsilon^{1/2},
$$
which together with the obvious inequality
$$
\int_0^\pi (1+{\tilde\beta}\cos(2\theta))|u|^2\,d\theta\le C
$$
and \eqref{Rayleigh_quotient_3d} imply the lower bound of first inequality in \eqref{mu_1_beta}. Finally, from \eqref{mu_1_beta}, we conclude that $\lim_{\tilde\beta \to -1} \mu_1(\tilde\beta)=0$.
The lemma is proved.
\end{proof}

\begin{proof}[Proof of Theorem \ref{3d_eigenvalue_thm}]
Let $0 < \lambda_1(\beta) \le \lambda_2(\beta)$ denote the first and the second nonzero eigenvalue of the problem \eqref{eigenvalue_problem_3d}, respectively. By Lemma \ref{lem5.3}, $\mu_1(\tilde{\beta})$ is strictly increasing in $(-1,1]$, so we know that $\lambda_1(\beta) = \mu_1(-\beta)$, $\lambda_2(\beta) = \mu_1(\beta)$. Therefore, $\lambda_1$ being strictly decreasing with respect to $\frac{a_1}{a_2} \in [1 ,\infty)$ follows from the monotonicity of $\mu_1(\tilde{\beta})$ for $\tilde{\beta} \in (-1,0)$. The inequalities in Theorem \ref{3d_eigenvalue_thm} follow from \eqref{mu_1_beta} with $\tilde\beta = -\beta = - \frac{a_1-a_2}{a_1+a_2}$. Finally, when $\beta > 0$, we can see from the proof of Lemma \ref{lem5.2} that the eigenspace corresponding to $\lambda_1$ is one dimensional, the corresponding eigenfunctions have exactly two zeros at $\theta = \pi/2$ and $3\pi/2$, and they are odd with respect to $\theta = \pi/2$ and $3\pi/2$.
\end{proof}

\subsection{Higher dimensional case}

In this subsection, we consider the case when $n\ge 4$.
We will show that there exists a small constant $\varepsilon_0$, depending only on $n$, such that if $$
(1-\varepsilon_0) \frac{I}{\|I\|} \le \frac{M}{\|M\|} \le (1+\varepsilon_0) \frac{I}{\|I\|},
$$the eigenspace corresponding to the first nonzero eigenvalue $\lambda_1$ of \eqref{SL_problem} satisfies the property $O$, which is defined as follows.

\begin{definition}\label{property_O}
We say that a function space on $\bS^{n-2} \subset \bR^{n-1}$ satisfies the property $O$ if it is the span of functions which are odd in one of the $x_i$ variables and even with respect to other variables.
\end{definition}

Indeed, we consider the following operator on $\bS^{n-2}$:
$$
L_\mu=-\text{div}_{\bS^{n-2}}\big((1+\mu b(x))\nabla_{\bS^{n-2}}\big) \quad \mbox{for}~~\mu \in \bR,
$$
where $b(x) \in L^\infty(\bS^{n-2})$. Let $\lambda_{1,\mu}$ and $V_{1,\mu}$ be the corresponding first nonzero eigenvalue and the eigenspace of the eigenvalue problem
$$
L_\mu u = \lambda (1+\mu b(x))u.
$$

\begin{proposition}
\label{property_O_prop}
Consider the above eigenvalue problem,
and assume that $b(x)$ is even with respect all variables, and $V_{1,\mu_0}$ satisfies the property $O$ for some $\mu_0 \in \bR$. Then there exists an $\varepsilon_0$, depending only on $n$,  an upper bound of $\|b\|_{L^\infty}$, and $\mu_0$, such that $V_{1,\mu}$ also satisfies the property $O$ for any $\mu \in (\mu_0 - \varepsilon_0, \mu_0 + \varepsilon_0)$.
\end{proposition}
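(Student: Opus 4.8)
The plan is to exploit the invariance of the weighted operator $L_\mu$ under the reflection group $(\bZ/2)^{n-1}$ and thereby reduce the claim to the continuity, in $\mu$, of finitely many eigenvalues. Since $b$ is even in every variable, both the weight $1+\mu b$ and the operator $L_\mu$ are invariant under each coordinate reflection $x_i\mapsto -x_i$, so $L^2(\bS^{n-2},(1+\mu b)\,d\sigma)$ splits orthogonally into the $2^{n-1}$ joint parity sectors $H_\chi$ indexed by subsets $\chi\subseteq\{1,\dots,n-1\}$, where $H_\chi$ consists of the functions odd in $x_i$ for $i\in\chi$ and even in $x_i$ for $i\notin\chi$; each $H_\chi$ is $L_\mu$-invariant. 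For $\chi\neq\emptyset$ let $\nu_\chi(\mu)$ denote the bottom eigenvalue of $L_\mu$ on $H_\chi$, and let $\nu_\emptyset(\mu)$ denote the \emph{second} eigenvalue of $L_\mu$ on $H_\emptyset$ (the first being $0$, realized by the constants). Then $\lambda_{1,\mu}=\min_\chi\nu_\chi(\mu)$, and $H_\chi$ contributes to $V_{1,\mu}$ exactly when $\nu_\chi(\mu)=\lambda_{1,\mu}$. Hence $V_{1,\mu}$ satisfies property $O$ if and only if
\[
\nu_\chi(\mu)>\min_{1\le j\le n-1}\nu_{\{j\}}(\mu)\qquad\text{for every }\chi\text{ with }|\chi|\neq 1,
\]
the right-hand side then equaling $\lambda_{1,\mu}$. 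By hypothesis this holds at $\mu=\mu_0$, so $g_0:=\min_{|\chi|\neq1}\nu_\chi(\mu_0)-\lambda_{1,\mu_0}>0$; moreover, since $b$ is even, $\int_{\bS^{n-2}}b\,x_i\,d\sigma=0$, so Lemma \ref{lambda_1_upper_bound} applied to $a=1+\mu_0b$ gives $\lambda_{1,\mu_0}\le n-2$.

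The next step is the uniform continuity in $\mu$ of the finitely many quantities $\nu_\chi$. Since $1+\mu_0 b$ is bounded above and below on $\bS^{n-2}$, for $\mu$ close to $\mu_0$ one has $1-C|\mu-\mu_0|\le (1+\mu b)/(1+\mu_0 b)\le 1+C|\mu-\mu_0|$ pointwise, with $C$ depending only on $\|b\|_{L^\infty}$ and on a lower bound for $1+\mu_0 b$. Inserting this into the Courant--Fischer min--max formulas for $\nu_\chi$ over subspaces of $H_\chi$ — both the Dirichlet form $\int(1+\mu b)|\nabla_{\bS^{n-2}}u|^2$ and the mass form $\int(1+\mu b)|u|^2$ get multiplied by a factor in $[1-C|\mu-\mu_0|,\,1+C|\mu-\mu_0|]$ relative to their values at $\mu_0$ — yields, for every $\chi$ simultaneously,
\[
\frac{1-C|\mu-\mu_0|}{1+C|\mu-\mu_0|}\,\nu_\chi(\mu_0)\ \le\ \nu_\chi(\mu)\ \le\ \frac{1+C|\mu-\mu_0|}{1-C|\mu-\mu_0|}\,\nu_\chi(\mu_0).
\]
This is elementary and uses no analytic perturbation theory, and it is uniform because there are only $2^{n-1}$ sectors.

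It remains to choose $\varepsilon_0$. For $|\chi|\neq1$, the bound above gives $\nu_\chi(\mu)\ge(1-C|\mu-\mu_0|)(\lambda_{1,\mu_0}+g_0)$, while $\min_j\nu_{\{j\}}(\mu)\le(1+C|\mu-\mu_0|)\lambda_{1,\mu_0}$; so the inequality of the first paragraph persists as long as $(1-C|\mu-\mu_0|)(\lambda_{1,\mu_0}+g_0)>(1+C|\mu-\mu_0|)\lambda_{1,\mu_0}$, i.e. as long as $|\mu-\mu_0|<g_0/\!\left(C(2\lambda_{1,\mu_0}+g_0)\right)=:\varepsilon_0$. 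Thus for $\mu\in(\mu_0-\varepsilon_0,\mu_0+\varepsilon_0)$ the space $V_{1,\mu}$ satisfies property $O$. Here $\varepsilon_0$ depends only on $n$, $\|b\|_{L^\infty}$, $\mu_0$ (through the bounds on $1+\mu_0 b$), and the spectral-gap data $\lambda_{1,\mu_0},g_0$ at $\mu_0$; in the case of interest $\mu_0=0$, where $L_0=-\Delta_{\bS^{n-2}}$, one has $\lambda_{1,0}=n-2$ and $g_0=n$ (the eigenvalue $2(n-1)$ of the lowest non--property-$O$ spherical harmonics minus $n-2$), so $\varepsilon_0$ becomes a constant depending only on $n$ and $\|b\|_{L^\infty}$, as claimed.

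The main obstacle is organizational rather than analytic: one must carefully verify that the sector decomposition is genuinely $L_\mu$-invariant (this requires evenness of $b$ in \emph{all} variables, not merely symmetry under a single reflection) and that the reduction of property $O$ to the inequalities among the $\nu_\chi(\mu)$ is an exact equivalence — in particular treating the fully even sector $H_\emptyset$ correctly, where the relevant quantity is the second eigenvalue because the constants permanently occupy the bottom of the spectrum. The one quantitative subtlety is that the radius $\varepsilon_0$ is governed by the spectral gap $g_0$ at $\mu_0$, which is strictly positive precisely because of the standing hypothesis that $V_{1,\mu_0}$ satisfies property $O$, and which is universal (depending only on $n$) in the situation where the proposition is actually applied.
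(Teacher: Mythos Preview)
Your proof is correct and takes a genuinely different route from the paper. The paper proceeds by analytic perturbation theory: starting from a basis $\{f_1,\ldots,f_m\}$ of $V_{1,\mu_0}$, it builds formal power series $\tilde\lambda_j=\lambda_{1,\mu_0}+\sum_k\varepsilon^k c_k$ and $\tilde f_j=f_j+\sum_k\varepsilon^k v_k$, solves for the $c_k,v_k$ order by order using the inverse of $\lambda_{1,\mu_0}(1+\mu_0 b)-L_{\mu_0}$ on the orthogonal complement of $V_{1,\mu_0}$, verifies that each $v_k$ inherits the parity of $f_j$ because $b$ is even, and then checks convergence. Your argument instead exploits the $(\bZ/2)^{n-1}$ symmetry directly: you decompose $L^2$ into the finitely many parity sectors $H_\chi$, observe that property~$O$ for the eigenspace is exactly the statement that the global first nonzero eigenvalue is attained only in the singleton sectors $H_{\{j\}}$, and then use Courant--Fischer to show that the sector eigenvalues $\nu_\chi(\mu)$ vary by a multiplicative factor $1+O(|\mu-\mu_0|)$, so the strict gap $g_0$ at $\mu_0$ persists.

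Your approach is more elementary (no Fredholm inverses, no series convergence) and makes the mechanism transparent: the result is really a finite spectral-gap continuity statement. The paper's approach is more constructive, yielding explicit perturbed eigenfunctions $\tilde f_j$, which in principle carry more information though the paper does not use it. Both proofs share the same honest caveat you flag: the quantitative $\varepsilon_0$ depends on the spectral data at $\mu_0$ (in the paper, through $f_1$ and the norm of $\cR$; in yours, through $g_0$ and $\lambda_{1,\mu_0}$), which becomes universal in $n$ only in the application $\mu_0=0$. One small point worth stating explicitly in your write-up is that the equivalence between property~$O$ and the sector-eigenvalue inequalities uses that $V_{1,\mu}$, being an eigenspace of an operator commuting with all reflections, itself decomposes as $\bigoplus_\chi(V_{1,\mu}\cap H_\chi)$; this is why $V_{1,\mu}\subset\bigoplus_j H_{\{j\}}$ automatically yields a spanning set of the required parity type.
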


\begin{proof}
Suppose that an orthogonal basis of $V_{1,\mu_0}$ is given by $\{f_1,\ldots,f_m\}$ with $m\in \{1,\ldots,n-1\}$, where for $j=1,\dots,m$, $f_j$ is odd in $x_j$ and even in other variables. Let $\varepsilon_0>0$ be a small constant to be specified later. The perturbation argument below gives all the eigenfunctions of $L_\mu$ close to $\lambda_{1,\mu_0}$ when $\mu$ is in a small neighborhood of $\mu_0$.

For any $\mu\in (\mu_0-\varepsilon_0,\mu_0+\varepsilon_0)$, we consider the expansions
\begin{equation}
                \label{eq10.31}
\tilde\lambda_1=\lambda_{1,\mu_0}+\sum_{k=1}^\infty \varepsilon^k c_k,
\quad \tilde f_1=f_1+\sum_{k=1}^\infty \varepsilon^k v_k,
\end{equation}
where $\varepsilon=\mu-\mu_0\in (-\varepsilon_0,\varepsilon_0)$.
Then
$$
L_{\mu} \tilde f_1=\tilde \lambda_1 (1+\mu b(x))\tilde f_1
$$ is equivalent to
\begin{align}
                                    \label{eq9.41}
&\Big[\Big(\lambda_{1,\mu_0}+\sum_{k=1}^\infty \varepsilon^k c_k\Big)\Big(1+(\mu_0+\varepsilon)b(x)\Big)-L_{\mu_0}\Big]
\Big(f_1+\sum_{k=1}^\infty \varepsilon^k v_k\Big)\notag\\
&=-\varepsilon\text{div}_{\bS^{n-2}}\Big( b(x)\nabla_{\bS^{n-2}}
\Big(f_1+\sum_{k=1}^\infty \varepsilon^k v_k\Big)\Big).
\end{align}
To solve for $c_k,v_k,k=1,\ldots$, we compare the coefficients of $\varepsilon^k$ on both sides of \eqref{eq9.41}. The zeroth order term on the left-hand side is equal to zero because $f_1$ is an eigenfunction of $L_{\mu_0}$ with the eigenvalue $\lambda_{1,\mu_0}$.

Considering the first order terms, we get
\begin{align}
                \label{eq9.52}
&\big(\lambda_{1,\mu_0}(1+\mu_0 b(x))-L_{\mu_0}\big)v_1\notag\\
&=-(\lambda_{1,\mu_0} b(x)+c_1(1+\mu_0 b(x))) f_1-\text{div}_{\bS^{n-2}}\big( b(x)\nabla_{\bS^{n-2}}f_1\big).
\end{align}
Let $X_0$ and $X_2$ be the orthogonal (complement) spaces of $V_{1,\mu}$ in $L_2(\bS^{n-2})$ and $H^2(\bS^{n-2})$, respectively. Then because  $\lambda_{1,\mu_0}(1+\mu_0 b(x))-L_{\mu_0}$ is self-adjoint, it is easily seen that
$$
(\lambda_{1,\mu_0}(1+\mu_0 b(x))-L_{\mu_0})X_2\subset X_0.
$$
Moreover, the mapping $\lambda_{1,\mu_0}(1+\mu_0 b(x))-L_{\mu_0}:X_2\to X_0$ is injective. By the Fredholm theorem, we also know that the mapping is surjective. Therefore, $\lambda_{1,\mu_0}(1+\mu_0 b(x))-L_{\mu_0}$ has a bounded inverse $\cR$ from $X_0$ to $X_2$, and \eqref{eq9.52} has a unique solution $v_1\in X_2$ if and only if its right-hand side is orthogonal to $f_1,\ldots,f_m$. Since the right-hand side is odd in $x_1$ and even in the other variables, we know that it is orthogonal to $f_2,\ldots,f_m$. To make it to be also orthogonal to $f_1$, we find a unique $c_1$ given by
$$
c_1=\frac{\int_{\bS^{n-2}} \Big(b(x)|\nabla_{\bS^{n-2}}f_1|^2-\lambda_{1,\mu_0}b(x)f_1^2\Big)}
{\int_{\bS^{n-2}}(1+\mu_0b(x))f_1^2}.
$$
Then
$$
v_1=-\cR\Big((\lambda_{1,\mu_0} b(x)+c_1(1+\mu_0 b(x))) f_1+\text{div}_{\bS^{n-2}}\big( b(x)\nabla_{\bS^{n-2}}f_1\big)\Big)\in X_2,
$$
which is odd in $x_1$ and even in other variables because $b(x)$ is even with respect to all variables.

Now considering the second order terms, we get
\begin{align*}
&(\lambda_{1,\mu_0}(1+\mu_0 b(x))-L_{\mu_0})v_2
=-\big(c_2(1+\mu_0 b(x))+c_1b(x) \big)f_1\notag\\
&\quad -(\lambda_{1,\mu_0} b(x)+c_1(1+\mu_0 b(x)))v_1-\text{div}_{\bS_{n-2}}\big( b(x)\nabla_{\bS_{n-2}}v_1\big).
\end{align*}
As before, the right-hand side above is orthogonal to $f_2,\ldots,f_m$. To make it to be also orthogonal to $f_1$, we find a unique $c_2$ given by
$$
c_2=\frac{\int_{\bS_{n-2}} \Big(b\nabla_{\bS_{n-2}}v_1\cdot \nabla_{\bS_{n-2}}f_1-(\lambda_{1,\mu_0} b+c_1(1+\mu_0 b))v_1f_1-c_1 b f_1^2\Big)}{\int_{\bS_{n-2}}(1+\mu_0 b)f_1^2}.
$$
Then
\begin{align*}
v_2=-\cR\Big(&\big(c_2(1+\mu_0 b(x))+c_1b(x) \big)f_1\notag\\
&\quad +(\lambda_{1,\mu_0} b(x)+c_1(1+\mu_0 b(x)))v_1
+\text{div}_{\bS_{n-2}}\big( b(x)\nabla_{\bS_{n-2}}v_1\big)\Big)\in X_2,
\end{align*}
which is odd in $x_1$ and even in other variables.

We can repeat this procedure and solve all the $c_k$ and $v_k$'s inductively. Moreover, all the $v_k$'s are odd in $x_1$ and even in other variables. Note that $|c_k|$ and the $H^2$ norm of $v_k$ can be bounded by $C^k$, where $C$ is some positive constant depending only on $f_1$, $b(x)$, and the norm of $\cR$. Therefore,  by taking $\varepsilon_0$ sufficiently small (with the same dependence), both series in \eqref{eq10.31} are convergent in $\bR$ and $H^2(\bS^{n-2})$ respectively, and $\tilde f_1$ is odd in $x_1$ and even in other variables. Similarly, we can find the eigenpairs $(\tilde \lambda_j,\tilde f_j)$ for $j=2,\ldots,m$. From the min-max formula of the eigenvalues, we know that every eigenvalue is Lipschitz in $\mu$. In particular, for $\varepsilon_0$ sufficiently small, we know that $\lambda_{1,\mu}=\min\{\tilde \lambda_1,\ldots,\tilde \lambda_m\}$ and $V_{1,\mu}$ is spanned by $\tilde f_j$'s for those $j$ such that $\tilde \lambda_j=\lambda_{1,\mu}$. Therefore, $V_{1,\mu}$ satisfies the property $O$.
\end{proof}

Applying Proposition \ref{property_O_prop} with $n \ge 4$, $\|b\|_{L^\infty} \le 8$, $\mu_0 = 0$, we obtain an $\varepsilon_0$ such that $V_{1,\mu}$ also satisfies the property $O$ for any $\mu \in (- \varepsilon_0, \varepsilon_0)$. Setting
$$
b(x) = \sum_{i=1}^{n-2} \frac{a_i - a_{n-1}}{\varepsilon_0 a_{n-1}} 2x_i^2\quad \text{and}\quad
\mu = \frac{\varepsilon_0}{2},
$$
we have 
$$
1 + \mu b(x) = \frac{1}{a_{n-1}} \sum_{i=1}^{n-1} a_i x_i^2.
$$
Therefore, the following corollary follows.

\begin{corollary}
\label{property_o_corollary}
For $n \ge 4$, there exists a small constant $\varepsilon_0$ depending only on $n$, such that if $$
(1-\varepsilon_0) \frac{I}{\|I\|} \le \frac{M}{\|M\|} \le (1+\varepsilon_0) \frac{I}{\|I\|},
$$
the eigenspace corresponding to the first nonzero eigenvalue $\lambda_1$ of \eqref{SL_problem} with $a(\xi) = \xi^t M \xi$ satisfies the property $O$.
\end{corollary}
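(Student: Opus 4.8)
The plan is to obtain the corollary as a direct application of Proposition \ref{property_O_prop}, taking as base point $\mu_0 = 0$, at which $L_0 = -\dv_{\bS^{n-2}}\nabla_{\bS^{n-2}}$ is the Laplace--Beltrami operator on $\bS^{n-2}$.

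First I would record the unperturbed situation. Since $n \ge 4$, the sphere $\bS^{n-2}$ has dimension $\ge 2$, so the first nonzero eigenvalue of $L_0$ is $n-2$, its eigenspace is exactly $V_{1,0} = \mathrm{span}\{x_1,\dots,x_{n-1}\}$ (the restrictions of the coordinate functions), and the next eigenvalue $2(n-1)$ is strictly larger. Each $x_j$ is odd in $x_j$ and even in the other variables, and $\{x_1,\dots,x_{n-1}\}$ is orthogonal in $L^2(\bS^{n-2})$; hence $V_{1,0}$ satisfies the property $O$ and possesses a basis of the type required in the proof of Proposition \ref{property_O_prop} (with $m = n-1$).

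Next I would invoke Proposition \ref{property_O_prop} with $\mu_0 = 0$, with $b$ even with respect to all variables, and with the a priori bound $\|b\|_{L^\infty(\bS^{n-2})} \le 8$; this produces a constant $\varepsilon_0' = \varepsilon_0'(n) > 0$ such that $V_{1,\mu}$ satisfies the property $O$ whenever $|\mu| < \varepsilon_0'$. Set $\varepsilon_0 := \min\{\varepsilon_0', 1/2\}$, which depends only on $n$. Given a positive definite $M$ with $(1-\varepsilon_0)\tfrac{I}{\|I\|} \le \tfrac{M}{\|M\|} \le (1+\varepsilon_0)\tfrac{I}{\|I\|}$, after a rotation I may assume \eqref{M_condition} holds with eigenvalues $a_1 \ge \dots \ge a_{n-1} > 0$, and the matrix inequality forces $a_i/a_{n-1} \in [\tfrac{1-\varepsilon_0}{1+\varepsilon_0}, \tfrac{1+\varepsilon_0}{1-\varepsilon_0}]$ for every $i$. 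Define
$$
b(x) = \sum_{i=1}^{n-2} \frac{2(a_i - a_{n-1})}{\varepsilon_0\, a_{n-1}}\, x_i^2,
$$
which is even in every variable and, on $\bS^{n-2}$, obeys $\|b\|_{L^\infty} \le \tfrac{2}{\varepsilon_0}\max_i \tfrac{|a_i - a_{n-1}|}{a_{n-1}} \le \tfrac{2}{\varepsilon_0}\cdot\tfrac{2\varepsilon_0}{1-\varepsilon_0} = \tfrac{4}{1-\varepsilon_0} \le 8$, so the a priori bound is met. Taking $\mu = \varepsilon_0/2 \in (-\varepsilon_0', \varepsilon_0')$, one computes $1 + \mu b(x) = \tfrac{1}{a_{n-1}}\sum_{i=1}^{n-1} a_i x_i^2 = \tfrac{1}{a_{n-1}}\,\xi^t M \xi$ on $\bS^{n-2}$; after cancelling the constant factor $a_{n-1}^{-1}$ the eigenvalue problem $L_\mu u = \lambda (1 + \mu b) u$ becomes exactly \eqref{SL_problem} with $a(\xi) = \xi^t M \xi$. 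Hence $\lambda_{1,\mu} = \lambda_1$ and $V_{1,\mu}$ is the eigenspace in question, and since $|\mu| < \varepsilon_0'$, Proposition \ref{property_O_prop} shows that it satisfies the property $O$.

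The only subtlety is the apparent circularity in the constants: the $\varepsilon_0'$ furnished by Proposition \ref{property_O_prop} depends on the bound for $\|b\|_{L^\infty}$, while $b$ is itself built out of $\varepsilon_0$. This is resolved by the order of operations above --- one fixes the uniform a priori bound $8$ first, extracts $\varepsilon_0'(n)$, and only afterward sets $\varepsilon_0 = \min\{\varepsilon_0', 1/2\}$ and defines $b$, the point being that $\|b\|_{L^\infty} \le 8$ holds for every admissible $M$ as soon as $\varepsilon_0 \le 1/2$. Beyond this bookkeeping there is no genuine obstacle; the substance lives entirely in Proposition \ref{property_O_prop}.
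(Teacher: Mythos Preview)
Your proposal is correct and follows essentially the same approach as the paper: apply Proposition~\ref{property_O_prop} at $\mu_0=0$ with the a priori bound $\|b\|_{L^\infty}\le 8$, then take $b(x)=\sum_{i=1}^{n-2}\frac{2(a_i-a_{n-1})}{\varepsilon_0 a_{n-1}}x_i^2$ and $\mu=\varepsilon_0/2$ so that $1+\mu b(x)=a_{n-1}^{-1}\xi^tM\xi$. You have simply supplied the details the paper leaves implicit---the verification that $V_{1,0}=\mathrm{span}\{x_1,\dots,x_{n-1}\}$ satisfies property~$O$, the check that $\|b\|_{L^\infty}\le 8$, and the resolution of the apparent circularity between $\varepsilon_0$ and the bound on $b$.
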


The following lemma will not be used in this paper.

\begin{lemma}
                \label{lem4.4}
For $n \ge 4$, let $\lambda_1$ be the first nonzero eigenvalue to the eigenvalue problem \eqref{SL_problem} with $a(\xi) = \xi^t M \xi$, where $M$ satisfies \eqref{M_condition}. Then for any $a_2 \ge \ldots \ge a_{n-1} > 0$, we have $\lambda_1\to 0$ as $a_1\to +\infty$.
\end{lemma}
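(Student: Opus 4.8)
The plan is to bound $\lambda_1$ from above by a single explicit competitor in the Rayleigh quotient \eqref{first_eigenvalue} and to check that the resulting bound tends to $0$ as $a_1 \to +\infty$. Throughout, $a_2 \ge \cdots \ge a_{n-1} > 0$ are fixed and $a(\xi) = \sum_{j=1}^{n-1} a_j x_j^2$; since both integrals in \eqref{first_eigenvalue} carry the same normalization, it is equivalent to work with unnormalized integrals over $\bS^{n-2}$. The guiding heuristic is that, after dividing the weight by $a_1$, problem \eqref{SL_problem} formally degenerates to $-\dv_{\bS^{n-2}}\big(x_1^2\,\nabla_{\bS^{n-2}} u\big) = \lambda\, x_1^2 u$, whose weight vanishes on the equator $\{x_1 = 0\}$; for this limiting operator $\operatorname{sgn}(x_1)$ has vanishing weighted Dirichlet energy while staying orthogonal to constants, so a regularized $\operatorname{sgn}(x_1)$ should be a near-null competitor once $a_1$ is large.

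Concretely, for $\delta \in (0,1/2)$ to be fixed below, let $u_\delta(\xi) := \psi_\delta(x_1)$, where $\psi_\delta$ is the odd Lipschitz function on $[-1,1]$ equal to $1$ on $[\delta,1]$, to $-1$ on $[-1,-\delta]$, and linear on $[-\delta,\delta]$. Then $u_\delta \in H^1(\bS^{n-2})$, and since $a$ is even under $x_1 \mapsto -x_1$ while $u_\delta$ is odd, $\langle u_\delta, 1\rangle_{\bS^{n-2}} = 0$, so $u_\delta$ is an admissible competitor. I will use three elementary facts: on the slab $\{|x_1| < \delta\}$ one has $a(\xi) \le a_1\delta^2 + a_2$ (from $\sum_{j\ge 2} x_j^2 \le 1$ and $a_2 = \max_{j\ge 2} a_j$); the measure of $\{|x_1| < \delta\} \cap \bS^{n-2}$ is at most $C_n \delta$ (the co-area factor in the $x_1$-variable is $(1-t^2)^{(n-4)/2}$, which is bounded on $[-\delta,\delta]$ precisely because $n \ge 4$); and $\int_{\bS^{n-2}} x_j^2\,d\sigma = |\bS^{n-2}|/(n-1)$. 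Here and below $C_n$ denotes a constant depending only on $n$, possibly changing from line to line.

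Since $\nabla_{\bS^{n-2}} u_\delta = \psi_\delta'(x_1)\,\nabla_{\bS^{n-2}} x_1$ and $|\nabla_{\bS^{n-2}} x_1|^2 = 1 - x_1^2 \le 1$, we have $|\nabla_{\bS^{n-2}} u_\delta|^2 \le \delta^{-2}\chi_{\{|x_1|<\delta\}}$, hence
\[
\int_{\bS^{n-2}} a\,|\nabla_{\bS^{n-2}} u_\delta|^2\,d\sigma \le \delta^{-2}\,(a_1\delta^2 + a_2)\,C_n\delta = C_n\big(a_1\delta + a_2\delta^{-1}\big).
\]
As $u_\delta^2 \equiv 1$ outside the slab, and $\int_{\bS^{n-2}} a\,d\sigma = \tfrac{|\bS^{n-2}|}{n-1}\sum_{j} a_j \ge \tfrac{|\bS^{n-2}|}{n-1}\, a_1$,
\[
\int_{\bS^{n-2}} a\, u_\delta^2\,d\sigma \ge \int_{\bS^{n-2}} a\,d\sigma - \int_{\{|x_1|<\delta\}} a\,d\sigma \ge \tfrac{|\bS^{n-2}|}{n-1}\, a_1 - C_n\big(a_1\delta^3 + a_2\delta\big).
\]
Now take $\delta = \sqrt{a_2/a_1}$, which lies in $(0,1/2)$ once $a_1 > 4a_2$. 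Then the numerator is at most $2C_n\sqrt{a_1 a_2}$, while $a_1\delta^3 + a_2\delta = 2 a_2^{3/2} a_1^{-1/2} \to 0$, so the denominator is at least $\tfrac{|\bS^{n-2}|}{2(n-1)}\, a_1$ for all large $a_1$. Therefore
\[
\lambda_1 \le \frac{2C_n\sqrt{a_1 a_2}}{\tfrac{|\bS^{n-2}|}{2(n-1)}\, a_1} = \frac{4C_n(n-1)}{|\bS^{n-2}|}\,\sqrt{\frac{a_2}{a_1}},
\]
which tends to $0$ as $a_1 \to +\infty$, giving the claim.

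The only real subtlety is that this "$\pm 1$ glued across the equator" competitor does \emph{not} have small energy for the genuine (nondegenerate) weight: the lower-order part $\sum_{j\ge 2} a_j x_j^2$ of $a$ is of order one throughout the transition region, so $u_\delta$ carries Dirichlet energy of order $\sqrt{a_1 a_2}$, which actually blows up. The estimate nevertheless closes because the $a$-weighted $L^2$-mass of $u_\delta$ grows like $a_1$; balancing these two effects is exactly what pins down the transition width $\delta \sim \sqrt{a_2/a_1}$ and yields the quantitative rate $\lambda_1 = O(\sqrt{a_2/a_1})$. The remaining ingredients — admissibility of $u_\delta$, the co-area slab measure estimate, and the second-moment identity for $\int_{\bS^{n-2}} x_j^2$ — are routine, and no smoothing of $\psi_\delta$ is needed since Lipschitz functions are legitimate competitors in \eqref{first_eigenvalue}.
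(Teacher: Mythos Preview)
Your proof is correct and follows essentially the same approach as the paper's: both plug a smoothed sign function across the equator $\{x_1=0\}$ into the Rayleigh quotient \eqref{first_eigenvalue}. The paper parametrizes by the polar angle $\theta_1$ and takes $u(\theta_1)=\max\{-1,\min\{1,\varepsilon^{-1}(\theta_1-\pi/2)\}\}$, referring back to the computation in Lemma~\ref{lem5.3}; you parametrize by $x_1=\cos\theta_1$ directly, which is equivalent near the equator. Your write-up is more self-contained and also extracts the quantitative rate $\lambda_1=O(\sqrt{a_2/a_1})$, matching the $n=3$ bound in Theorem~\ref{3d_eigenvalue_thm}. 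One small remark: the co-area factor $(1-t^2)^{(n-4)/2}$ is bounded on the slab $|t|<\delta<1/2$ for every $n\ge 3$, not only $n\ge 4$, so your parenthetical ``precisely because $n\ge 4$'' overstates the role of the dimension at that step; it does not affect the argument.
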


\begin{proof}
We consider in the spherical coordinate: for $x'\in \bS^{n-2} \subset \bR^{n-1}$, we can write
$$
x_1=\cos\theta_1,\quad x_2=\sin\theta_1\cos\theta_2,\quad
x_3=\sin\theta_1\sin\theta_2\cos\theta_3,\ldots,
$$
$$
x_{n-2}=\sin\theta_1\sin\theta_2\cdots\sin\theta_{n-3}\cos\theta_{n-2},\quad
x_{n-1}=\sin\theta_1\sin\theta_2\cdots\sin\theta_{n-3}\sin\theta_{n-2},
$$
where $\theta_1,\theta_2,\ldots,\theta_{n-3}\in [0,\pi]$ and $\theta_{n-2}\in [0,2\pi)$.
Then the proof is similar to that of the first upper bound in \eqref{mu_1_beta} by considering $u(\theta_1)=\max\{-1,\min\{1,\varepsilon^{-1}(\theta_1-\pi/2)\}\}$.
\end{proof}

\section{Proof of Theorem \ref{optimality_touch_case_thm}}

\begin{proof}[Proof of Theorem \ref{optimality_touch_case_thm}]
In this proof, we denote $\alpha = \alpha(\lambda_1)$ for simplicity. After a suitable rotation in $\bR^{n-1}$, we may assume without loss of generality that
$$
(f-g)(x') = \sum_{j=1}^{n-1} a_j x_j^2 + {green}e(x'),
$$
where $|e(x')| \le C|x'|^4$.

\textbf{Step 1.} Since $\widetilde\Omega$ is symmetric in $x_i$ and  $\varphi$ is odd in $x_j$, by the uniqueness of solutions, we know that $u$ is odd in $x_j$. In $\Omega_1$, where $\Omega_r$ is defined as in \eqref{domain_def_Omega}, let $\bar{u}$ be defined as \eqref{u_bar_def}.
By a similar argument as in Section 3 and Theorem \ref{touch_thm}, we know that $\bar{u}$ satisfies
$$\dv\Big[\Big( \sum_{i=1}^{n-1} a_i |x_i|^2 \Big)\nabla \bar u \Big] = \dv F \quad \mbox{in}\,\,B_1 \subset \bR^{n-1},$$
where $F$ satisfies
\begin{equation}
\label{F_bound_2}
|F(x')| \le C|x'|^{2+\alpha} \quad \mbox{for}~~x'\in B_{1},
\end{equation}
$C$ is a positive constant depending only on $n$ and  upper bounds of $\|\partial D_1 \|_{C^{4}}$ and $\|\partial D_2\|_{C^{4}}$.
Again, we denote $Y_{k,i}$ to be a normalized eigenfunction corresponding to $(k+1)$-th eigenvalue $\lambda_k$ of the problem \eqref{SL_problem}, so that $\{Y_{k,i}\}_{k,i}$ forms an orthonormal basis of $L^2(\bS^{1})$ under the inner product \eqref{inner_product}. By the assumption, we denote $Y_{1,j}$ to be the eigenfunction that is odd in $x_j$. It is easily seen that $Y_{1,j}$ is an eigenfunction corresponding to $\lambda_1$ in the half sphere $\bS^{n-2} \cap \{x_j > 0 \}$ with zero Dirichlet boundary condition. Since $\lambda_1$ is the first nonzero eigenvalue of the eigenvalue problem in the sphere, it must be the first eigenvalue of the eigenvalue problem in the half sphere. Therefore, it is simple and $Y_{1,j}$ does not change its sign in the half sphere. Without loss of generality, we assume $Y_{1,j}$ is positive in $\{x_j >0\}$ and negative in $\{x_j < 0\}$. Since $\bar u$ is odd with respect to $x_j = 0$, and in particular $\bar{u}(0) = 0$, we have the following decomposition
\begin{equation}
\label{u_bar_expansion}
\bar{u}(x') = \sum_{k=1}^\infty \sum_{i=1}^{N(k)} U_{k,i}(r)Y_{k,i}(\xi), \quad x' \in B_1\setminus\{0\},
\end{equation}
where $U_{k,i}(r) = \fint_{\bS^{1}} a(\xi) \bar{u}(r,\xi) Y_{k,i}(\xi) \, d\xi$ and $U_{k,i} \in C([0,1)) \cap C^\infty((0,1))$. Then $U_{1,j}$ satisfies $U_{1,j}(0) = 0$ and
\begin{equation*}
LU_{1,j}:= U_{1,j}''(r) + \frac{n}{r} U_{1,j}'(r) - \frac{\lambda_1}{r^2} U_{1,j}(r) = H(r), \quad 0 < r <1,
\end{equation*}
where
\begin{align*}
H(r) &= \int_{\bS^{n-2}} \frac{(\dv F) Y_{1,1}(\xi)}{a(\xi)r^2} \, d\xi = \int_{\bS^{n-2}} \frac{\partial_r F_r + \frac{1}{r} \nabla_\xi F_\xi}{a(\xi) r^2} Y_{1,1}(\xi) \, d\xi\\
&= \partial_r \left(\int_{\bS^{n-2}} \frac{F_r}{a(\xi)r^2} Y_{1,1}(\xi) \, d\xi \right) + \int_{\bS^{n-2}} \frac{2F_r Y_{1,1}}{a(\xi)r^3} - \frac{F_\xi}{r^3} \nabla_\xi \left( \frac{Y_{1,1}(\xi)}{a(\xi)} \right)\, d\xi\\
&=: A'(r) + B(r), \quad 0 < r < 1,
\end{align*}
and $A(r),B(r) \in C^1([0,1))$ satisfy, in view of \eqref{F_bound_2}, that
\begin{equation}
\label{AB_bounds}
|A(r)| \le C(n)r^{\alpha}, \quad |B(r)| \le C(n)r^{\alpha-1}, \quad 0 < r < 1.
\end{equation}

\textbf{Step 2.}
We will prove, for some constant $C_1$, that
\begin{equation}
\label{U_11_formula}
U_{1,j}(r) = C_1 r^\alpha + v(r), \quad 0<r<1,
\end{equation}
where $|v(r)| \le Cr^{1+\alpha}$.
We use the method of reduction of order to find a bounded solution $v$ satisfying $Lv = H$ in $(0,1)$, and then show that  $|v(r)| \le Cr^{1+\alpha}$. Note that $h = r^{\alpha}$ is a solution of $Lh = 0$. Let $v = hw$ and
$$
w(r) := \int_0^r\frac{1}{s^{n+2\alpha}} \int_0^s \tau^{n+\alpha} H(\tau) \, d\tau ds, \quad 0 < r < 1.
$$
By a direct computation,
$$
Lv = L(hw) = h w'' + \left( 2h' + \frac{n}{r} h \right)w' =  H
$$
By \eqref{AB_bounds}, we can estimate $|w(r)| \le Cr$. Therefore, $|v(r)| \le Cr^{1+\alpha}$.
Since $U_{1,j} - v$ is bounded and satisfies $L(U_{1,j} - v) = 0$ in $(0,1)$, we know that $U_{1,j} = C_1h + v$ and \eqref{U_11_formula} follows.

\textbf{Step 3.} Completion of the proof.

Since $D_1$ and $D_2$ are strictly convex and symmetric in $x_1,\ldots,x_{n-1}$, it is easy to see that $\partial_\nu x_j \ge 0$ in $\{x_j \ge 0\}$ and $\partial_\nu x_j \le 0$ in $\{x_j \le 0\}$. Therefore, under the assumptions of Theorem \ref{optimality_touch_case_thm}, $x_j$ is a subsolution of \eqref{general_equation} in $\{x_j \ge 0\}$, and is a supersolution of \eqref{general_equation} in $\{x_j \le 0\}$. Hence, $u \ge x_j$ in $\{x_j \ge 0\}$ and $u \le x_j$ in $\{x_j \le 0\}$. Then, $|\bar{u}(x')| \ge |x_j|$ in $B_1 \subset \bR^{n-1}$. Since $Y_{1,j}$ has the same sign as $x_j$, we have
$$
U_{1,j} = \fint_{\bS^{n-2}} a(\xi) \bar{u}(r,\xi) Y_{1,j}(\xi) \, d\xi \ge Cr
$$
for some positive constant $C$. This implies $C_1 > 0$. By \eqref{u_bar_expansion} and \eqref{U_11_formula}, we have
$$
\left( \int_{\bS^{n-2}} a(\xi) |\bar{u}(r,\xi)|^2 \, d\xi \right)^{1/2} \ge |U_{1,j}(r)| \ge \frac{C_1}{2} r^\alpha \quad \mbox{for}~~ 0 < r < r_0,
$$
where $r_0$ is some small positive constant. Then, for any $r \in (0,r_0)$, there exists a $\xi_0(r) \in \bS^{n-2}$ such that
$$
|\bar{u}(r, \xi_0(r))| \ge \frac{1}{C_2} r^{\alpha}
$$
for some positive constant $C_2$. Since $\bar u$ is the average of $u$ in the $x_n$ direction, by \eqref{grad_u_bound_rough} with $\varepsilon = 0$, we have
$$
|u(r,\xi_0(r), 0) - \bar{u}(r, \xi_0(r))| \le C r^2 \sup_{x_n\in (g(x'),f(x'))} |\partial_{x_n} u(r,\xi_0(r),x_n)| \le Cr.
$$
Therefore, there exists a small constant $r_1$ such that for any $r \in (0, r_1)$,
\begin{equation*}
|u(r, \xi_0(r), 0)| \ge \frac{1}{2C_2} r^{\alpha}.
\end{equation*}
We denote $x_0 = (r , \xi_0(r) ,0)$. For a sufficiently large constant $C_3$, independent of $x_0$, we have, by Theorem \ref{touch_thm},
$$
\left|u\left( \frac{x_0}{C_3} \right)\right| \le C \left( \frac{|x_0'|}{C_3} \right)^\alpha \le \frac{1}{4C_2} |x_0'|^\alpha.
$$
Therefore, there exists an $x$ on the line segment between $x_0$ and $x_0/C_3$, such that
$$
|\nabla u(x)| \ge \frac{1}{C} |x'|^{\alpha-1}
$$
for some positive constant $C$ depending only on $n$, a positive lower bound of the eigenvalues of $D^2 (f-g)(0')$, and upper bounds of $\|\partial D_1 \|_{C^{4}}$ and $\|\partial D_2\|_{C^{4}}$. This concludes the proof.
\end{proof}

\section{The variable coefficients case}

In this section, we study the insulated problem with variable coefficients in dimension $n \ge 3$:
\begin{equation}
\label{variable_coefficient_2}
\left\{
\begin{aligned}
-\partial_i (A^{ij}(x)  \partial_j u(x)) &=0 \quad \mbox{in }\Omega_{R_0},\\
A^{ij}(x)  \partial_j u(x) \nu_i &= 0 \quad \mbox{on } \Gamma_+ \cup \Gamma_-,\\
\|u\|&_{L^\infty(\Omega_{R_0})} \le 1,
\end{aligned}
\right.
\end{equation}
where $\Gamma_+$ and $\Gamma_-$ are given in \eqref{Gamma_plusminus}, $(A^{ij}(x)) \in C^\gamma(\Omega_{R_0}), \gamma > 0$ is symmetric and uniformly elliptic with the Lipschitz constant $\sigma$, i.e.
$$
A^{ij}(x) = A^{ji}(x), \quad \sigma I \le A(x) \le \frac{1}{\sigma} I.
$$
We want to find a point $x_0$ and a linear transformation $l$, so that after the linear transformation, the coefficients $A^{ij}$ becomes $\delta_{ij}$ at the point $l(x_0)$, and $l(x_0)$ is the middle point of the closet points of $\widetilde{\Gamma}_+ := l (\Gamma_+)$ and $\widetilde{\Gamma}_- := l (\Gamma_-)$. Then we can apply Theorem \ref{touch_thm} or \ref{gen_thm} to get the gradient estimates.

When $\varepsilon = 0$, $x_0$ is the origin, and $l = C^{-1}(0)$, where $C(x) = \sqrt{A(x)}$. When $\varepsilon \neq 0$, by the change of variables
\begin{equation*}
\left\{
\begin{aligned}
y' &= x' ,\\
y_n &= x_n - g(x'),
\end{aligned}
\right.
\quad \forall (x',x_n) \in \Omega_{R_0},
\end{equation*}
we may assume that $g \equiv 0.$ Then any linear transformation $l$ (with no translation) maps the lower boundary $\Gamma_-$ to a hyperplane
$\widetilde{\Gamma}_-$. It also maps the tangent plane $x_n = \varepsilon/2$ of the upper boundary $\Gamma_+$ to the tangent plane of $\widetilde{\Gamma}_+$, which is paralleled to $\widetilde{\Gamma}_-$ as the mapping is linear. Then
$l(e_n\varepsilon/2)$ is the closest point on $\widetilde{\Gamma}_+$ to $\widetilde{\Gamma}_-$. Let $C(x) = \sqrt{A(x)}$ and $C_n$ be the last column of $C(x)$. We have the following Lemma.

\begin{lemma}
Under the settings above, let $R = \sqrt{n-1} \varepsilon / (2\sigma^2)$, there exists $x_0 \in \overline{B}_R \cap \{x_n = 0\}$ such that with the mapping $l = C^{-1}(x_0)$, $l(x_0)$ is the middle point of the closet points of $\widetilde{\Gamma}_+$ and $\widetilde{\Gamma}_-$.
\end{lemma}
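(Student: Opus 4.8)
The plan is to recast the assertion as a fixed-point problem for an explicit continuous self-map of the $(n-1)$-dimensional disk $\overline{B}_R\cap\{x_n=0\}$ and then apply Brouwer's theorem.

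\textbf{Step 1: the geometry in the new coordinates.} After the reduction $g\equiv 0$, the surface $\Gamma_-$ is the hyperplane $\{x_n=-\varepsilon/2\}$, the lowest point of the strictly convex surface $\Gamma_+$ is $\frac{\varepsilon}{2}e_n=(0',\varepsilon/2)$, and the tangent plane of $\Gamma_+$ there is $\{x_n=\varepsilon/2\}$. For any invertible linear $l$ (no translation), $\widetilde\Gamma_-=l(\Gamma_-)=\{y:(l^{-t}e_n)\cdot y=-\varepsilon/2\}$ is parallel to $l(\{x_n=\varepsilon/2\})$, the tangent plane of $\widetilde\Gamma_+$ at $P_+(l):=l(\tfrac{\varepsilon}{2}e_n)$; since $\Gamma_+\subset\{x_n\ge\varepsilon/2\}$ near the origin with equality only at $\tfrac{\varepsilon}{2}e_n$, the point $P_+(l)$ is the unique closest point of $\widetilde\Gamma_+$ to $\widetilde\Gamma_-$, as noted above, and the corresponding closest point of $\widetilde\Gamma_-$ is the orthogonal projection $P_-(l)$ of $P_+(l)$ onto $\widetilde\Gamma_-$.

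\textbf{Step 2: reduction to a fixed-point equation.} Take $l=C^{-1}(x_0)$ with $x_0\in\{x_n=0\}$ in a small ball about the origin, so that (for $\varepsilon$ small) $A$ is defined and $C^\gamma$ there. Using $C(x_0)$ symmetric, one has $l^{-t}e_n=C(x_0)e_n=C_n(x_0)$, $|C_n(x_0)|^2=A^{nn}(x_0)$, and $C(x_0)C_n(x_0)=A(x_0)e_n$. A direct computation gives $C_n(x_0)\cdot P_+(l)=\tfrac{\varepsilon}{2}$, hence $P_-(l)=P_+(l)-\tfrac{\varepsilon}{A^{nn}(x_0)}C_n(x_0)$, and the midpoint of the two closest points is $M(x_0)=P_+(l)-\tfrac{\varepsilon}{2A^{nn}(x_0)}C_n(x_0)$. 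The requirement $l(x_0)=M(x_0)$, i.e. $C^{-1}(x_0)x_0=M(x_0)$, multiplied through by $C(x_0)$, becomes
$$x_0=\Phi(x_0):=\frac{\varepsilon}{2}\Bigl(e_n-\frac{A(x_0)e_n}{A^{nn}(x_0)}\Bigr).$$

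\textbf{Step 3: Brouwer and conclusion.} The map $\Phi$ is continuous since $A\in C^\gamma$ and $A^{nn}\ge\sigma>0$; its $n$-th component is $\frac{\varepsilon}{2}(1-A^{nn}(x_0)/A^{nn}(x_0))=0$, so $\Phi$ takes values in $\{x_n=0\}$; and from $|A^{in}(x_0)|\le 1/\sigma$ for $1\le i\le n-1$ and $A^{nn}(x_0)\ge\sigma$,
$$|\Phi(x_0)|^2=\frac{\varepsilon^2}{4A^{nn}(x_0)^2}\sum_{i=1}^{n-1}|A^{in}(x_0)|^2\le\frac{(n-1)\varepsilon^2}{4\sigma^4}=R^2,$$
so $\Phi$ maps the compact convex set $\overline{B}_R\cap\{x_n=0\}$ into itself. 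Brouwer's fixed-point theorem gives $x_0\in\overline{B}_R\cap\{x_n=0\}$ with $\Phi(x_0)=x_0$; for this $x_0$ and $l=C^{-1}(x_0)$ we have $lA(x_0)l^t=C^{-1}(x_0)A(x_0)C^{-1}(x_0)=I$, so the coefficients become $\delta_{ij}$ at $l(x_0)$, and $l(x_0)=M(x_0)$ is exactly the midpoint of the closest points of $\widetilde\Gamma_+$ and $\widetilde\Gamma_-$, which is the assertion. There is no serious obstacle here; the only point requiring care is the linear-algebra bookkeeping in Step 2 — obtaining the clean formula for $\Phi$, checking that it automatically stays in the hyperplane $\{x_n=0\}$, and verifying that it is a self-map of the ball of radius \emph{exactly} $R=\sqrt{n-1}\,\varepsilon/(2\sigma^2)$ — everything else being elementary.
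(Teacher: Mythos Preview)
Your proof is correct and follows essentially the same approach as the paper: both reduce the midpoint condition to the fixed-point equation $(x_0)'=-\dfrac{\varepsilon}{2}\,\dfrac{(A_n(x_0))'}{A^{nn}(x_0)}$ on $\overline{B}_R\cap\{x_n=0\}$ and conclude by Brouwer's theorem, with the same bound $|\Phi|\le R$ coming from $A^{nn}\ge\sigma$ and $|A^{in}|\le 1/\sigma$. The only cosmetic difference is that you derive the midpoint by an explicit orthogonal-projection computation, whereas the paper argues via the observation that $l(x_0)$ is automatically equidistant from the two parallel hyperplanes (since $(x_0)_n=0$) and then imposes that $l(x_0)-P_+$ be parallel to the normal $C_n$; the resulting map is identical.
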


\begin{proof}
It is easily seen that the normal direction of $\widetilde{\Gamma}_-$ is given by $C_n(x_0)$. By linearity, the distance from $l(x_0)$ to $l(\{x_n = \varepsilon/2\})$ is equal to the distance from $l(x_0)$ to $\widetilde{\Gamma}_-$. Thus, it suffices to have $l(x_0 - e_n \varepsilon/2) \| C_n(x_0)$. This is equivalent
to $x_0 - e_n \varepsilon/2 \| C(x_0) C_n(x_0)$, where $C(x_0) C_n(x_0) =: A_n(x_0)$ is the last column of $A(x_0)$. Thus, we only need to have
$$
(x_0)' = -\frac{\varepsilon}{2} \frac{(A_n(x_0))'}{A^{nn}(x_0)},
$$
where $(x_0)'$ and $(A_n(x_0))'$ are the first $n-1$ components of $x_0$ and $A_n(x_0)$, respectively. Now we define a mapping $T$ on $R^{n-1}$ by
$$
Ty = -\frac{\varepsilon}{2} \frac{(A_n(y,0))'}{A^{nn}(y,0)}.
$$
Clearly, $T$ is continuous. Since $A^{nn} \ge \sigma$ and $|A^{ni}| \le 1/ \sigma$, for $i = 1, 2, \ldots, n-1$, we have $|Ty| \le R$ for any $y \in \overline{B}_R$. By the Brouwer fixed point theorem, $T$ has a fixed point $(x_0)' \in \overline{B}_R$.
\end{proof}

After applying this linear transform and picking an appropriate coordinate system, we reduce the problem \eqref{variable_coefficient_2} to the case when $A^{ij}(0) = \delta_{ij}$. Therefore, Theorems \ref{touch_thm} and \ref{gen_thm} apply.

\bibliographystyle{amsplain}
\begin{bibdiv}
\begin{biblist}

\bib{ACKLY}{article}{
      author={Ammari, H.},
      author={Ciraolo, G.},
      author={Kang, H.},
      author={Lee, H.},
      author={Yun, K.},
       title={Spectral analysis of the {N}eumann-{P}oincar\'{e} operator and
  characterization of the stress concentration in anti-plane elasticity},
        date={2013},
        ISSN={0003-9527},
     journal={Arch. Ration. Mech. Anal.},
      volume={208},
      number={1},
       pages={275\ndash 304},
  url={https://doi-org.proxy.libraries.rutgers.edu/10.1007/s00205-012-0590-8},
      review={\MR{3021549}},
}

\bib{ADY}{article}{
      author={Ammari, H.},
      author={Davies, B.},
      author={Yu, S.},
       title={Close-to-touching acoustic subwavelength resonators:
  eigenfrequency separation and gradient blow-up},
        date={2020},
        ISSN={1540-3459},
     journal={Multiscale Model. Simul.},
      volume={18},
      number={3},
       pages={1299\ndash 1317},
         url={https://doi-org.proxy.libraries.rutgers.edu/10.1137/20M1313350},
      review={\MR{4128998}},
}

\bib{AKKY}{article}{
      author={Ammari, H.},
      author={Kang, H.},
      author={Kim, D.W.},
      author={Yu, S.},
       title={Quantitative estimates for stress concentration of the stokes
  flow between adjacent circular cylinders},
        date={2020},
        note={arXiv:2003.06578},
}

\bib{AKLLL}{article}{
      author={Ammari, H.},
      author={Kang, H.},
      author={Lee, H.},
      author={Lee, J.},
      author={Lim, M.},
       title={Optimal estimates for the electric field in two dimensions},
        date={2007},
        ISSN={0021-7824},
     journal={J. Math. Pures Appl. (9)},
      volume={88},
      number={4},
       pages={307\ndash 324},
  url={https://doi-org.proxy.libraries.rutgers.edu/10.1016/j.matpur.2007.07.005},
      review={\MR{2384571}},
}

\bib{AKL}{article}{
      author={Ammari, H.},
      author={Kang, H.},
      author={Lim, M.},
       title={Gradient estimates for solutions to the conductivity problem},
        date={2005},
        ISSN={0025-5831},
     journal={Math. Ann.},
      volume={332},
      number={2},
       pages={277\ndash 286},
  url={https://doi-org.proxy.libraries.rutgers.edu/10.1007/s00208-004-0626-y},
      review={\MR{2178063}},
}

\bib{BASL}{article}{
      author={Babu\v{s}ka, I.},
      author={Andersson, B.},
      author={Smith, P.J.},
      author={Levin, K.},
       title={Damage analysis of fiber composites. {I}. {S}tatistical analysis
  on fiber scale},
        date={1999},
        ISSN={0045-7825},
     journal={Comput. Methods Appl. Mech. Engrg.},
      volume={172},
      number={1-4},
       pages={27\ndash 77},
  url={https://doi-org.proxy.libraries.rutgers.edu/10.1016/S0045-7825(98)00225-4},
      review={\MR{1685902}},
}

\bib{BLY1}{article}{
      author={Bao, E.},
      author={Li, Y.Y.},
      author={Yin, B.},
       title={Gradient estimates for the perfect conductivity problem},
        date={2009},
        ISSN={0003-9527},
     journal={Arch. Ration. Mech. Anal.},
      volume={193},
      number={1},
       pages={195\ndash 226},
  url={https://doi-org.proxy.libraries.rutgers.edu/10.1007/s00205-008-0159-8},
      review={\MR{2506075}},
}

\bib{BLY2}{article}{
      author={Bao, E.},
      author={Li, Y.Y.},
      author={Yin, B.},
       title={Gradient estimates for the perfect and insulated conductivity
  problems with multiple inclusions},
        date={2010},
        ISSN={0360-5302},
     journal={Comm. Partial Differential Equations},
      volume={35},
      number={11},
       pages={1982\ndash 2006},
  url={https://doi-org.proxy.libraries.rutgers.edu/10.1080/03605300903564000},
      review={\MR{2754076}},
}

\bib{BLL}{article}{
      author={Bao, J.G.},
      author={Li, H.G.},
      author={Li, Y.Y.},
       title={Gradient estimates for solutions of the {L}am\'{e} system with
  partially infinite coefficients},
        date={2015},
        ISSN={0003-9527},
     journal={Arch. Ration. Mech. Anal.},
      volume={215},
      number={1},
       pages={307\ndash 351},
  url={https://doi-org.proxy.libraries.rutgers.edu/10.1007/s00205-014-0779-0},
      review={\MR{3296149}},
}

\bib{BLL2}{article}{
      author={Bao, J.G.},
      author={Li, H.G.},
      author={Li, Y.Y.},
       title={Gradient estimates for solutions of the {L}am\'{e} system with
  partially infinite coefficients in dimensions greater than two},
        date={2017},
        ISSN={0001-8708},
     journal={Adv. Math.},
      volume={305},
       pages={298\ndash 338},
  url={https://doi-org.proxy.libraries.rutgers.edu/10.1016/j.aim.2016.09.023},
      review={\MR{3570137}},
}

\bib{BT1}{incollection}{
      author={Bonnetier, E.},
      author={Triki, F.},
       title={Pointwise bounds on the gradient and the spectrum of the
  {N}eumann-{P}oincar\'{e} operator: the case of 2 discs},
        date={2012},
   booktitle={Multi-scale and high-contrast {PDE}: from modelling, to
  mathematical analysis, to inversion},
      series={Contemp. Math.},
      volume={577},
   publisher={Amer. Math. Soc., Providence, RI},
       pages={81\ndash 91},
  url={https://doi-org.proxy.libraries.rutgers.edu/10.1090/conm/577/11464},
      review={\MR{2985067}},
}

\bib{BT2}{article}{
      author={Bonnetier, E.},
      author={Triki, F.},
       title={On the spectrum of the {P}oincar\'{e} variational problem for two
  close-to-touching inclusions in 2{D}},
        date={2013},
        ISSN={0003-9527},
     journal={Arch. Ration. Mech. Anal.},
      volume={209},
      number={2},
       pages={541\ndash 567},
  url={https://doi-org.proxy.libraries.rutgers.edu/10.1007/s00205-013-0636-6},
      review={\MR{3056617}},
}

\bib{BV}{article}{
      author={Bonnetier, E.},
      author={Vogelius, M.},
       title={An elliptic regularity result for a composite medium with
  ``touching'' fibers of circular cross-section},
        date={2000},
        ISSN={0036-1410},
     journal={SIAM J. Math. Anal.},
      volume={31},
      number={3},
       pages={651\ndash 677},
  url={https://doi-org.proxy.libraries.rutgers.edu/10.1137/S0036141098333980},
      review={\MR{1745481}},
}

\bib{BudCar}{article}{
      author={Budiansky, B.},
      author={Carrier, G.~F.},
       title={{High Shear Stresses in Stiff-Fiber Composites}},
        date={1984},
        ISSN={0021-8936},
     journal={Journal of Applied Mechanics},
      volume={51},
      number={4},
       pages={733\ndash 735},
         url={https://doi.org/10.1115/1.3167717},
}

\bib{CKN}{article}{
      author={Caffarelli, L.},
      author={Kohn, R.},
      author={Nirenberg, L.},
       title={First order interpolation inequalities with weights},
        date={1984},
        ISSN={0010-437X},
     journal={Compositio Math.},
      volume={53},
      number={3},
       pages={259\ndash 275},
         url={http://www.numdam.org/item?id=CM_1984__53_3_259_0},
      review={\MR{768824}},
}

\bib{CY}{article}{
      author={Capdeboscq, Y.},
      author={Yang~Ong, S.C.},
       title={Quantitative {J}acobian determinant bounds for the conductivity
  equation in high contrast composite media},
        date={2020},
        ISSN={1531-3492},
     journal={Discrete Contin. Dyn. Syst. Ser. B},
      volume={25},
      number={10},
       pages={3857\ndash 3887},
  url={https://doi-org.proxy.libraries.rutgers.edu/10.3934/dcdsb.2020228},
      review={\MR{4147367}},
}

\bib{CodLev}{book}{
      author={Coddington, E.},
      author={Levinson, N.},
       title={Theory of ordinary differential equations},
   publisher={McGraw-Hill Book Company, Inc., New York-Toronto-London},
        date={1955},
      review={\MR{0069338}},
}

\bib{DL}{article}{
      author={Dong, H.},
      author={Li, H.G.},
       title={Optimal estimates for the conductivity problem by {G}reen's
  function method},
        date={2019},
        ISSN={0003-9527},
     journal={Arch. Ration. Mech. Anal.},
      volume={231},
      number={3},
       pages={1427\ndash 1453},
  url={https://doi-org.proxy.libraries.rutgers.edu/10.1007/s00205-018-1301-x},
      review={\MR{3902466}},
}

\bib{DLY}{article}{
      author={Dong, H.},
      author={Li, Y.Y.},
      author={Yang, Z.},
       title={Optimal gradient estimates of solutions to the insulated
  conductivity problem in dimension greater than two},
        date={2021},
        note={arXiv:2110.11313},
}

\bib{DZ}{article}{
      author={Dong, H.},
      author={Zhang, H.},
       title={On an elliptic equation arising from composite materials},
        date={2016},
        ISSN={0003-9527},
     journal={Arch. Ration. Mech. Anal.},
      volume={222},
      number={1},
       pages={47\ndash 89},
  url={https://doi-org.proxy.libraries.rutgers.edu/10.1007/s00205-016-0996-9},
      review={\MR{3519966}},
}

\bib{FKS}{article}{
      author={Fabes, E.B.},
      author={Kenig, C.E.},
      author={Serapioni, R.P.},
       title={The local regularity of solutions of degenerate elliptic
  equations},
        date={1982},
        ISSN={0360-5302},
     journal={Comm. Partial Differential Equations},
      volume={7},
      number={1},
       pages={77\ndash 116},
         url={https://doi.org/10.1080/03605308208820218},
      review={\MR{643158}},
}

\bib{GiaMar}{book}{
      author={Giaquinta, M.},
      author={Martinazzi, L.},
       title={An introduction to the regularity theory for elliptic systems,
  harmonic maps and minimal graphs},
     edition={Second},
      series={Appunti. Scuola Normale Superiore di Pisa (Nuova Serie) [Lecture
  Notes. Scuola Normale Superiore di Pisa (New Series)]},
   publisher={Edizioni della Normale, Pisa},
        date={2012},
      volume={11},
        ISBN={978-88-7642-442-7; 978-88-7642-443-4},
         url={https://doi.org/10.1007/978-88-7642-443-4},
      review={\MR{3099262}},
}

\bib{Gor}{article}{
      author={Gorb, Y.},
       title={Singular behavior of electric field of high-contrast concentrated
  composites},
        date={2015},
        ISSN={1540-3459},
     journal={Multiscale Model. Simul.},
      volume={13},
      number={4},
       pages={1312\ndash 1326},
         url={https://doi-org.proxy.libraries.rutgers.edu/10.1137/140982076},
      review={\MR{3418221}},
}

\bib{JiKang}{article}{
      author={Ji, Y.},
      author={Kang, H.},
       title={Spectrum of the neumann-poincar\'e operator and optimal estimates
  for transmission problems in presence of two circular inclusions},
        date={2021},
        note={arXiv:2105.06093},
}

\bib{Kang}{inproceedings}{
      author={Kang, H.},
       title={Quantitative analysis of field concentration in presence of
  closely located inclusions of high contrast},
   booktitle={Proceedings of the {I}nternational {C}ongress of {M}athematicians
  2022, to appear},
}

\bib{KLY1}{article}{
      author={Kang, H.},
      author={Lim, M.},
      author={Yun, K.},
       title={Asymptotics and computation of the solution to the conductivity
  equation in the presence of adjacent inclusions with extreme conductivities},
        date={2013},
        ISSN={0021-7824},
     journal={J. Math. Pures Appl. (9)},
      volume={99},
      number={2},
       pages={234\ndash 249},
  url={https://doi-org.proxy.libraries.rutgers.edu/10.1016/j.matpur.2012.06.013},
      review={\MR{3007847}},
}

\bib{KLY2}{article}{
      author={Kang, H.},
      author={Lim, M.},
      author={Yun, K.},
       title={Characterization of the electric field concentration between two
  adjacent spherical perfect conductors},
        date={2014},
        ISSN={0036-1399},
     journal={SIAM J. Appl. Math.},
      volume={74},
      number={1},
       pages={125\ndash 146},
         url={https://doi-org.proxy.libraries.rutgers.edu/10.1137/130922434},
      review={\MR{3162415}},
}

\bib{Kel}{article}{
      author={Keller, J.~B.},
       title={{Stresses in Narrow Regions}},
        date={1993},
        ISSN={0021-8936},
     journal={Journal of Applied Mechanics},
      volume={60},
      number={4},
       pages={1054\ndash 1056},
         url={https://doi.org/10.1115/1.2900977},
}

\bib{KL}{article}{
      author={Kim, J.},
      author={Lim, M.},
       title={Electric field concentration in the presence of an inclusion with
  eccentric core-shell geometry},
        date={2019},
        ISSN={0025-5831},
     journal={Math. Ann.},
      volume={373},
      number={1-2},
       pages={517\ndash 551},
  url={https://doi-org.proxy.libraries.rutgers.edu/10.1007/s00208-018-1688-6},
      review={\MR{3968879}},
}

\bib{L}{article}{
      author={Li, H.G.},
       title={Asymptotics for the {E}lectric {F}ield {C}oncentration in the
  {P}erfect {C}onductivity {P}roblem},
        date={2020},
        ISSN={0036-1410},
     journal={SIAM J. Math. Anal.},
      volume={52},
      number={4},
       pages={3350\ndash 3375},
         url={https://doi-org.proxy.libraries.rutgers.edu/10.1137/19M1282623},
      review={\MR{4126320}},
}

\bib{LLY}{article}{
      author={Li, H.G.},
      author={Li, Y.Y.},
      author={Yang, Z.},
       title={Asymptotics of the gradient of solutions to the perfect
  conductivity problem},
        date={2019},
        ISSN={1540-3459},
     journal={Multiscale Model. Simul.},
      volume={17},
      number={3},
       pages={899\ndash 925},
         url={https://doi-org.proxy.libraries.rutgers.edu/10.1137/18M1214329},
      review={\MR{3977105}},
}

\bib{LWX}{article}{
      author={Li, H.G.},
      author={Wang, F.},
      author={Xu, L.},
       title={Characterization of electric fields between two spherical perfect
  conductors with general radii in 3{D}},
        date={2019},
        ISSN={0022-0396},
     journal={J. Differential Equations},
      volume={267},
      number={11},
       pages={6644\ndash 6690},
  url={https://doi-org.proxy.libraries.rutgers.edu/10.1016/j.jde.2019.07.007},
      review={\MR{4001067}},
}

\bib{LN}{article}{
      author={Li, Y.Y.},
      author={Nirenberg, L.},
       title={Estimates for elliptic systems from composite material},
        date={2003},
        ISSN={0010-3640},
     journal={Comm. Pure Appl. Math.},
      volume={56},
      number={7},
       pages={892\ndash 925},
         url={https://doi-org.proxy.libraries.rutgers.edu/10.1002/cpa.10079},
      review={\MR{1990481}},
}

\bib{LV}{article}{
      author={Li, Y.Y.},
      author={Vogelius, M.},
       title={Gradient estimates for solutions to divergence form elliptic
  equations with discontinuous coefficients},
        date={2000},
        ISSN={0003-9527},
     journal={Arch. Ration. Mech. Anal.},
      volume={153},
      number={2},
       pages={91\ndash 151},
  url={https://doi-org.proxy.libraries.rutgers.edu/10.1007/s002050000082},
      review={\MR{1770682}},
}

\bib{LY2}{article}{
      author={Li, Y.Y.},
      author={Yang, Z.},
       title={Gradient estimates of solutions to the insulated conductivity
  problem in dimension greater than two},
        date={2020},
        note={arXiv:2012.14056, Math. Ann., to appear},
}

\bib{LimYun}{article}{
      author={Lim, M.},
      author={Yun, K.},
       title={Blow-up of electric fields between closely spaced spherical
  perfect conductors},
        date={2009},
        ISSN={0360-5302},
     journal={Comm. Partial Differential Equations},
      volume={34},
      number={10-12},
       pages={1287\ndash 1315},
  url={https://doi-org.proxy.libraries.rutgers.edu/10.1080/03605300903079579},
      review={\MR{2581974}},
}

\bib{Mar}{article}{
      author={Markenscoff, X.},
       title={Stress amplification in vanishingly small geometries},
        date={1996},
        ISSN={1432-0924},
     journal={Computational Mechanics},
      volume={19},
      number={1},
       pages={77\ndash 83},
         url={https://doi.org/10.1007/BF02824846},
}

\bib{We}{article}{
      author={Weinkove, B.},
       title={The insulated conductivity problem, effective gradient estimates
  and the maximum principle},
        date={2021},
        note={arXiv:2103.14143, Math. Ann., to appear},
}

\bib{Y1}{article}{
      author={Yun, K.},
       title={Estimates for electric fields blown up between closely adjacent
  conductors with arbitrary shape},
        date={2007},
        ISSN={0036-1399},
     journal={SIAM J. Appl. Math.},
      volume={67},
      number={3},
       pages={714\ndash 730},
         url={https://doi-org.proxy.libraries.rutgers.edu/10.1137/060648817},
      review={\MR{2300307}},
}

\bib{Y2}{article}{
      author={Yun, K.},
       title={Optimal bound on high stresses occurring between stiff fibers
  with arbitrary shaped cross-sections},
        date={2009},
        ISSN={0022-247X},
     journal={J. Math. Anal. Appl.},
      volume={350},
      number={1},
       pages={306\ndash 312},
  url={https://doi-org.proxy.libraries.rutgers.edu/10.1016/j.jmaa.2008.09.057},
      review={\MR{2476915}},
}

\bib{Y3}{article}{
      author={Yun, K.},
       title={An optimal estimate for electric fields on the shortest line
  segment between two spherical insulators in three dimensions},
        date={2016},
        ISSN={0022-0396},
     journal={J. Differential Equations},
      volume={261},
      number={1},
       pages={148\ndash 188},
  url={https://doi-org.proxy.libraries.rutgers.edu/10.1016/j.jde.2016.03.005},
      review={\MR{3487255}},
}

\end{biblist}
\end{bibdiv}

\end{document}